\documentclass{amsart}
\usepackage[english]{babel}
\usepackage{amssymb,amsmath,amsfonts,amsthm,epsfig,amscd}
\usepackage[dvipsnames]{xcolor}
\usepackage[bookmarksopen,bookmarksdepth=2]{hyperref}
\hypersetup{colorlinks=true,citecolor=NavyBlue,linkcolor=BrickRed,urlcolor=Green} 
\usepackage[font=footnotesize,labelfont=bf]{caption}
\usepackage[mathscr]{eucal}
\usepackage{mathrsfs,calligra}
\usepackage{tikz}
\usepackage{tikz-cd}
\usepackage{tikzsymbols} 
\usepackage{adjustbox}
\usepackage{graphicx}
\usepackage{lmodern}
\usepackage{multirow}
\usepackage{enumitem}
\usepackage{verbatim}
\usepackage{mathtools}
\usepackage{fancyhdr}
\usepackage{threeparttable}
\usepackage{microtype}
\usepackage{booktabs,array}

\usepackage[all,cmtip,poly]{xy}
\usepackage{relsize}

\usepackage{float}
\newcolumntype{C}{>{$}c<{$}} 

\newcounter{rownumber}[table]
\renewcommand{\therownumber}{(\roman{rownumber})}
\newcolumntype{N}{>{\refstepcounter{rownumber}\therownumber}c}

\DeclareUnicodeCharacter{2212}{-}

\theoremstyle{plain}

\newtheorem{thm}[equation]{Theorem}
\newtheorem{prop}[equation]{Proposition}

\newtheorem{lemma}[equation]{Lemma}
\newtheorem{cor}[equation]{Corollary}

\theoremstyle{definition}

\newtheorem{remark}[equation]{Remark}

\newtheorem{defn}[equation]{Definition}

\numberwithin{equation}{section}
\numberwithin{figure}{subsection}

\foreach \theoremenv in {theorem, thm, prop, proposition,lemma, lem, remark, cor, corollary, sublem, fact, conj, rem, notn, ex, example, defn, assumption, exercise, definition} {
  \AtBeginEnvironment{\theoremenv}{%
    \setlist[enumerate,1]{label={(\roman*)}}
  }
}

\newif\iffinalrun
\iffinalrun
\else
\fi

\iffinalrun
  \newcommand{\need}[1]{}
  \newcommand{\mar}[1]{}
\else
  \newcommand{\need}[1]{{\tiny *** #1}}
  \newcommand{\mar}[1]{\marginpar{\raggedright\tiny fixme #1}}
\fi

\newcommand{\F}{\FF}

\newcommand{\Q}{\QQ}

\newcommand{\Z}{\ZZ}

\newcommand{\FF}{{\mathbf F}}

\newcommand{\QQ}{{\mathbf Q}}

\newcommand{\ZZ}{{\mathbf Z}}

\newcommand{\cL}{{\mathcal L}}

\newcommand{\cP}{{\mathcal P}}

\newcommand{\cR}{{\mathcal R}}

\newcommand{\cX}{{\mathcal X}}
\newcommand{\cY}{{\mathcal Y}}
\newcommand{\cZ}{{\mathcal Z}}

\newcommand{\Fbar}{\overline{\F}}
\newcommand{\Qbar}{\overline{\Q}}

\newcommand{\Fpbar}{\Fbar_p}

\newcommand{\Qp}{\Q_p}

\newcommand{\Qpbar}{\Qbar_p}

\DeclareMathOperator{\GL}{GL}

\DeclareMathOperator{\sheafHom}{\mathcal{H}\textit{\kern -1pt{om}}\,}
\DeclareMathOperator{\relSpec}{\mathcal{S}\textit{\kern -1pt{pec}}\,}

\DeclareMathOperator{\Sym}{Sym}

\newcommand{\St}{\mathrm{St}}

\newcommand{\ur}{\mathrm{ur}}

\newcommand{\rhobar}{\overline{\rho}}

\newcommand{\into}{\hookrightarrow}

\newcommand{\BT}{\operatorname{BT}}
\newcommand{\red}{\operatorname{red}}

\newcommand{\ru}{{\underline{r}}}

\newcommand{\eps}{\varepsilon}

\newcommand{\Zss}{\cZ_{\mathrm{ss}}}
\newcommand{\rZss}{Z_{\mathrm{ss}}}

\newcommand{\sss}{\mathrm{ss}}
\newcommand{\crys}{\mathrm{crys}}
\newcommand{\crit}{\mathrm{cr}}

\DeclareSymbolFont{eulerletters}{U}{eur}{m}{n}
\DeclareMathSymbol{\eulers}{\mathord}{eulerletters}{`s}
\DeclareMathSymbol{\eulert}{\mathord}{eulerletters}{`t}

\usetikzlibrary{decorations.markings}

\tikzset{
    labl/.style={anchor=south, rotate=90, inner sep=.5mm}
}

\makeatletter
\tikzcdset{
  open/.code     = {\tikzcdset{hook, circled};},
  closed/.code   = {\tikzcdset{hook, slashed};},
  circled/.code  = {\tikzcdset{markwith = {\draw (0,0) circle (.5ex);}};},
  slashed/.code  = {\tikzcdset{markwith = {\draw[-] (-.4ex,-.4ex) -- (.4ex,.4ex);}};},
  markwith/.code ={
    \pgfutil@ifundefined%
    {tikz@library@decorations.markings@loaded}%
    {\pgfutil@packageerror{tikz-cd}{You need to say %
      \string\usetikzlibrary{decorations.markings} to use arrows with markings}{}}{}%
    \pgfkeysalso{/tikz/postaction = {
      /tikz/decorate,
      /tikz/decoration={markings, mark = at position 0.4 with {#1}}}
    }
  },
}
\makeatother
\title{Inclusions between $p$-bounded crystalline loci in dimension two}
\author{Kalyani Kansal}
\author{Brandon Levin}
\author{David Savitt}
\begin{document}

\begin{abstract}
Let $p$ be an odd prime and $K/\Qp$ a finite unramified extension of degree $f>1$. Let $\cZ(\ru)$ be the reduced special fiber of the Emerton--Gee stack of two-dimensional crystalline representations of $\mathrm{Gal}(\overline{K}/K)$ of Hodge type $\ru$. We study the collection of stacks $\cZ(\ru)$ as $\ru$ varies over $p$-bounded Hodge types, as a set partially ordered under inclusion. We prove that aside from two degenerate cases, simple inclusions can be classified in terms of three operations on Hodge types, two of which have standard automorphic interpretations. We also prove, with one exception, that inclusions can be detected at the level of an inclusion of closed points (equivalently, semisimple mod $p$ Galois representations).  GPT-5.5 Pro was used extensively in the course of this work.

\end{abstract}
\maketitle

\tableofcontents

\section{Introduction}

\subsection{Main results and reductions} Let $p$ be a prime number and $K/\Qp$ a finite extension with residue field $k$. Emerton and Gee \cite{EGmoduli} have constructed a moduli stack $\cX_2$ of rank $2$ $(\varphi,\Gamma)$-modules over $K$ that can be conceived of as a moduli stack of rank $2$ representations of $G_K$, the absolute Galois group of~$K$.   Write $\cX_{2,\red}$ for the underlying reduced substack of $\cX_2$;\ it is an algebraic stack over $\Fpbar$ of dimension $[K:\Qp]$.  The stack $\cX_2$ has deep connections to the representation theory of $\GL_2(K)$. For example, by  \cite[Thm.~1.2.1]{EGmoduli} the irreducible components of $\cX_{2,\red}$ are in bijection with \emph{Serre weights}, the isomorphism classes of irreducible $\Fpbar$-representations of $\GL_2(k)$.

Assume henceforth that $p$ is odd and $K/\Qp$ is unramified of degree $f > 1$, and set $q = p^f$.

This note is a complement to the forthcoming paper \cite{KLSlgc}. Its purpose is to describe, as a set partially ordered under inclusion, a certain collection of closed substacks of $\cX_{2,\red}$ satisfying a natural $p$-adic Hodge-theoretic condition. To explain precisely which closed substacks  we will consider, we first recall what we mean by a Hodge type.  Since $K/\Qp$ is unramified we can (and do) identify the embeddings $K \into \Qpbar$ with embeddings $k \into \Fpbar$. Fix an embedding $\kappa_0 : k \into \Fpbar$ and recursively label the remaining embeddings by elements of $\Z/f\Z$ by taking $\kappa_{i+1}^p = \kappa_i$.

\begin{defn}\label{def:hodge-type}
A \emph{Hodge type} of rank $2$ is a tuple of integers $ \ru = \{r_{i,j}\}_{\kappa_i: K \into \Qpbar, 1 \le j \le 2}$ with $r_{i,1} \ge r_{i,2}$ for all $i$.
We say that the Hodge type~$\ru$ is 
\begin{itemize}
\item \textit{$p$-bounded} if $r_{i,1} - r_{i,2} \leq p$ for all $i$, 
\item \textit{regular} if $r_{i,1} - r_{i,2} > 0$ for all $i$. \end{itemize} 
If $\ru$ is a Hodge type and $\lambda \in \Z^{\Z/f\Z}$, we let $\ru + \lambda$ denote the translation of $\ru$ by $\lambda$, i.e., the Hodge type $(r_{i,1} + \lambda_i, r_{i,2} + \lambda_i)$.

Write $\ur$, $\BT$, $\crit$, and $\St$  for the Hodge types with $(r_{i,1},r_{i,2}) = (r,0)$ for all $i$, for $r = 0,1,p-1,p$ respectively. These stand for unramified, Barsotti--Tate, critical, and Steinberg. We say that a Hodge type is  \emph{scalar}, \emph{Barsotti--Tate}, \emph{critical}, or \emph{Steinberg} if it is the translation of $\ur$, $\BT$, $\crit$, or $\St$ by some $\lambda$.
\end{defn}

For each Hodge type $\ru$ of rank $2$, there is a closed substack $\cX^{\crys,\ru}_2 \subset \cX_2$ corresponding to crystalline representations with $\kappa_i$-labeled Hodge--Tate  weights $\{r_{i,1},r_{i,2}\}$ for each $i$. Its underlying reduced substack $\cZ(\ru) := \cX^{\crys,\ru}_{2,\red}$ is equidimensional of codimension $\#\{i : r_{i,1} = r_{i,2} \}$. If $\Lambda \subset \Z^{\Z/f\Z}$ is the sublattice consisting of tuples $(\lambda_i)$ such that $\sum_{i} p^{-i} \lambda_i \equiv 0 \pmod{q-1}$, then it is easy to see that $\cZ(\ru) = \cZ(\ru + \lambda)$ for any $\ru$ and $\lambda \in \Lambda$. Indeed, on Galois representations this amounts to twisting by a crystalline character whose Hodge--Tate weights are $(\lambda_i)_i$ and whose reduction mod $p$ is trivial.

If $\ru$ is regular, $p$-bounded, and non-Steinberg, then $\cZ(\ru)$ is a single irreducible component of $\cX_{2,\red}$. In our normalizations (following the normalizations of \cite{cegsA,bellovin2024irregular,KLSlgc}) it is the component $\cX(\sigma(\ru))$ corresponding to the Serre weight 
\[ \sigma(\ru) := \otimes_{i=0}^{f-1} (\det {}^{1 - r_{i,1}} \Sym^{r_{i,1} - r_{i,2} - 1} k^2) \otimes_{k,\kappa_i} \Fpbar. \]
The map $\ru \mapsto \sigma(\ru)$ is many-to-one:\ indeed, its fibers are equivalence classes under translation by $\Lambda$.

If instead $\ru$ is Steinberg,  it is proved in \cite{EGmoduli,cegsA} that $\cZ(\ru)$  is the union of two irreducible components. To be precise, if $\ru = \St + \lambda$ then one of those components is $\cZ(\BT + \lambda)$, while by \cite[Lem.~8.6.4]{EGmoduli} all the $\Fpbar$-points of the other component are twists of an extension of the trivial character by the cyclotomic character. This latter component is $\cX(\sigma(\ru))$ where $\sigma(\ru)$ is the Steinberg Serre weight $ \otimes_{i=0}^{f-1} (\det {}^{1 - r_{i,1}} \Sym^{p-1} k^2) \otimes_{k,\kappa_i} \Fpbar$,  so that $\cZ(\St + \lambda) = \cZ(\BT + \lambda) \cup \cX(\sigma(\ru))$.

As $\ru$ ranges over all $p$-bounded and regular Hodge types, $\cX(\sigma(\ru))$ ranges over the irreducible components of $\cX_{2,\red}$, while $\cZ(\ru)$ ranges over the \emph{Breuil--M\'ezard cycles} for $\GL_2(K)$ as in \cite[Conj.~1.7.2]{EGmoduli} and the discussion following it. This means that $\cZ(\ru)$ is supported at a representation $\rhobar : G_K \to \GL_2(\Fpbar)$ if and only if $\sigma(\ru)$ is a member of the set of Serre weights associated to $\rhobar$ in the weight part of Serre's conjecture. 

In this article we will be concerned with the closed substacks $\cZ(\ru)$ as $\ru$ ranges over all $p$-bounded (but not necessarily regular) Hodge types. Irregular Hodge types can be thought of as corresponding to ``phantom Serre weights'' in which some or all of the symmetric powers are allowed to be $-1$. The loci $\cZ(\ru)$ for irregular $\ru$ play a role in the local study of Hilbert modular forms of partial weight one, but as these stacks have positive codimension in $\cX_{2,\red}$, they are somewhat more difficult to study than the irreducible components. However, it is proved in \cite{bellovin2024irregular} that if~$\ru$ is $p$-bounded and non-Steinberg then $\cZ(\ru)$ is irreducible.

\begin{defn}
    Let $Z$ be the set of closed substacks $\cZ(\ru)$ as $\ru$ ranges over all $p$-bounded Hodge types.
\end{defn}

The set $Z$ is partially ordered under inclusion (i.e.\ under closed immersion), and our specific goal in this article is to describe this structure. To state our main results, we need to introduce several operations on Hodge types. 

\begin{defn}\label{def:operators}
    Let $f\geq 2$. For each $j \in \Z/f\Z$, we define operators $\mu_j$, $\theta_j$, and $\nu_j$ on $p$-bounded Hodge types $\underline{r}$
   by setting:
           \begin{align*}
    \mu_{j}(\underline{r})_{{i}} &:= \left\{ \begin{array}{ll}
        (r_{{i},1}-1,r_{{i},2}) & \text{if } {i}= {j}, \\
         (r_{i,1}+p,r_{i,2}) & \text{if } {i}= {j+1}, \\
         (r_{i,1},r_{i,2}) & \text{otherwise. }  
         \end{array} \right.  \end{align*}
         
\begin{align*}
         \theta_{j}(\underline{r})_{{i}} &:= \left\{ \begin{array}{ll}
        (r_{{i},1},r_{{i},2}-1) & \text{if } {i}= {j}, \\
         (r_{{i},1}+p,r_{{i},2}) & \text{if } {i}= {j+1}, \\
         (r_{{i},1},r_{{i},2}) & \text{otherwise. }  
         \end{array} \right.
         \end{align*}
 
         \begin{align*}
        \nu_{j}(\underline{r})_{{i}} &:= \left\{ \begin{array}{ll}
        (r_{i,1},r_{i,2}-1) & \text{if } {i}= {j}, \\
         (r_{i,2} + p,r_{i,1}) & \text{if } {i}= {j}+1, \\
         (r_{i,1},r_{i,2}) & \text{otherwise. }  
         \end{array} \right.
\end{align*}
The difference between $\theta_j$ and $\nu_j$ lies in the subscripts at index $i=j+1$. We say that $\ru$ is irregular at $j$ if $r_{j,1} = r_{j,2}$.
The operator $f_j \in \{\mu_j,\theta_j,\nu_j\}$ is \emph{valid} (for $\ru$) if
\begin{itemize}
    \item $f_j = \mu_j$,  $\ru$ is irregular at $j+1$, and $r_{j,1} - r_{j,2} >  0$;

    \item $f_j = \theta_j$, $\ru$ is irregular at $j+1$,  and $r_{j,1} - r_{j,2} < p$;

    \item $f_j = \nu_j$ and $\ru$ is irregular at $j$.
\end{itemize}
If $f_j$ is valid for $\ru$ then evidently $f_j(\ru)$ is again a $p$-bounded Hodge type.
\end{defn}

The definitions of the operators $\mu_j$ and $\theta_j$ are motivated by the theory of mod $p$ automorphic forms:\ the weights of mod $p$ automorphic forms transform by $\mu_j$ under partial Hasse invariants, and by $\theta_j$ under partial theta operators. The operator~$\nu_j$ does not have as straightforward an automorphic explanation,  
but can be related to restriction  to the vanishing locus of a partial Hasse invariant;\ see \cite{SYang2025geometric} or \cite{KLSlgc} for much more. 

\begin{remark}
    We caution the reader that the indexing of the operators $\mu_j$ and $\theta_j$ differs by $1$ from the indexing in \cite{bellovin2024irregular}. We have made this change, both here and in \cite{KLSlgc}, so that $\mu_j$ corresponds to multiplication by the partial Hasse invariant $\textrm{Ha}_j$ rather than $\textrm{Ha}_{j-1}$, and similarly for $\theta_j$ and the partial theta operator $\Theta_j$ rather than $\Theta_{j-1}$. There is an added advantage that $\mu_j$, $\theta_j$, and $\nu_j$ all affect Hodge--Tate weights in the same pair of embeddings;\ the disadvantage is that $\mu_j$ and $\theta_j$ affect an irregular embedding at index $j+1$ rather than $j$. 
\end{remark}

The following proposition is \cite[Cor.~5.3]{bellovin2024irregular}.

\begin{prop}\label{prop:operation-inclusion}
If $\ru$ is a $p$-bounded Hodge type and the operation $f_j \in \{\mu_j,\theta_j,\nu_j\}$ is valid, then $\cZ(\ru) \subseteq \cZ(f_j(\ru))$.    
\end{prop}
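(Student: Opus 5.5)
Both sides are reduced closed substacks of $\cX_{2,\red}$, and by \cite{bellovin2024irregular} $\cZ(\ru)$ is irreducible for non-Steinberg $\ru$. It therefore suffices to show that $\cZ(f_j(\ru))$ contains a dense set of $\Fpbar$-points of $\cZ(\ru)$. In the Steinberg case, $\cZ(\ru)$ is the union of the two components described above, and each component is handled separately. The natural dense set is the set of generic points in the description of $\cZ(\ru)$ via Breuil--Kisin modules, or \'etale $\varphi$-modules, of type $\ru$. Concretely, $\cZ(\ru)$ is the closure of the locus of reductions of crystalline lattices of Hodge type $\ru$. I would work with the explicit shapes of rank $2$ Kisin modules of $p$-bounded Hodge type from \cite{cegsA,bellovin2024irregular}. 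For these, the Frobenius matrix at embedding $i$ has elementary divisors $u^{r_{i,1}}, u^{r_{i,2}}$ up to twist. On a dense open of $\cZ(\ru)$ the reduction has a normal form $\varphi_i = \begin{pmatrix} u^{a_i} & x_i \\ 0 & u^{b_i}\end{pmatrix}$ (or its antidiagonal variant), with parameters constrained by $\ru$.

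The key step is a purely mod $p$ statement. I would show that every mod $p$ Kisin module $\overline{\mathfrak M}$ of ``shape'' $\ru$ (in the sense of \cite{cegsA}: height conditions on $\overline{\varphi}$ at each $i$) admits, after changing basis at embedding $j+1$ or $j$, a presentation of shape $f_j(\ru)$ with the same \'etale $\varphi$-module. Mod $p$, the data at embedding $i$ enter the \'etale $\varphi$-module only through the semilinear map $\varphi$ composed around the cycle. Since $\varphi$ is $p$-semilinear in $u$, multiplying the matrix at embedding $j$ by a power of $u$ is equivalent to multiplying the matrix at $j+1$ by the $p$-th power of that quantity. The effect of each operation is then as follows.
\begin{itemize}
\item For $\mu_j$, one lowers one exponent at $j$ by $1$ and compensates with $u^p$ at $j+1$. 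This is possible exactly when $r_{j,1}>r_{j,2}$, so that the top exponent can be lowered while respecting ordering. It also needs $\ru$ irregular at $j+1$, so that the matrix at $j+1$ is scalar $u^{r}$ times a unit and absorbs the change cleanly.
\item For $\theta_j$, one lowers the smaller exponent at $j$ and raises the top one at $j+1$, which needs $r_{j,1}-r_{j,2}<p$.
\item For $\nu_j$, irregularity at $j$ lets the scalar matrix at $j$ be rewritten with the factor $u^{-1}$ moved across. This forces the swap of the roles of the two weights at $j+1$, which is the $(r_{j+1,2}+p, r_{j+1,1})$ pattern.
\end{itemize}

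The main obstacle is that a mod $p$ Kisin module of shape $f_j(\ru)$ need not lift to a crystalline lattice of type $f_j(\ru)$. So I would not argue through single points. Instead, I would use the result of \cite{cegsA,bellovin2024irregular} that, for $p$-bounded types, $\cZ(\ru)$ equals the scheme-theoretic image of the reduced moduli of mod $p$ Kisin modules satisfying the shape condition for $\ru$; this is where $p$-boundedness is used. Granting that, the rebasing construction above gives a morphism from the shape-$\ru$ moduli to the shape-$f_j(\ru)$ moduli. This morphism is compatible with the maps to $\cX_{2,\red}$, so taking images yields $\cZ(\ru)\subseteq\cZ(f_j(\ru))$. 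If that scheme-theoretic description is unavailable in the needed generality, the fallback is to compare the explicit families of extensions of characters that are dense in each $\cZ$ and check the containment on generic points. This requires verifying that the resulting characters and extension classes match, which is where the validity conditions would be checked case by case.
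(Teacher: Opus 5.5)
The paper does not prove this statement; it is quoted from \cite[Cor.~5.3]{bellovin2024irregular}. Your proposal does not supply a proof, because its decisive input is neither available nor true. You assume that for $p$-bounded $\ru$ the stack $\cZ(\ru)$ is the scheme-theoretic image of the reduced moduli of mod $p$ Kisin modules satisfying the shape condition for $\ru$. This is not a result of \cite{cegsA} or \cite{bellovin2024irregular}, and it fails for two reasons.

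\emph{Kisin modules do not see the crystalline condition.} Mod $p$ Kisin modules only remember the restriction to $G_{K_\infty}$: they map to \'etale $\varphi$-modules over $k((u))$, not to $\cX_{2,\red}$. For weights up to $p$, whether a crystalline lift of type $\ru$ exists is not detected by the shape of a mod $p$ lattice; this is why the description of $\Zss(\ru)$ in \cite{gls12} needs substantial extra input.

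\emph{The image is too large.} Reductions of crystalline lattices of type $\ru$ satisfy only the \emph{closed} Schubert condition, since elementary divisors can degenerate mod $p$. That condition is also satisfied by reductions of crystalline lattices of every type $\ru'$ dominated by $\ru$ with the same determinant. For $p\ge 5$, take $\ru=(3,0)$ and $\ru'=(2,1)$ at every embedding. Then $\cZ(\ru)=\cX(\sigma(\ru))$ and $\cZ(\ru')=\cX(\sigma(\ru'))$ are distinct irreducible components, so the image you propose is strictly larger than $\cZ(\ru)$. If you instead mean the open Schubert cell, that locus is not closed, and the $G_K$-versus-$G_{K_\infty}$ problem remains.

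So even a correct re-latticing would only compare images of Kisin moduli, not crystalline loci. The obstacle you flag yourself, that shape-$f_j(\ru)$ modules need not lift, is never overcome.

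The mod $p$ step also breaks down for $\nu_j$.
\begin{itemize}
\item Moving a scalar $u^{-1}$ across $\varphi$ translates the type by $\lambda$ with $\lambda_j=-1$, $\lambda_{j+1}=p$. This $\lambda$ lies in $\Lambda$, so the move changes nothing.
\item Since $\nu_j$ lowers only $r_{j,2}$, and the Frobenius at $j$ is $u^{r_{j,1}}$ times a unit, the only single-step option is an index-one modification of the lattice between the $j$th and $(j+1)$st Frobenius. (A change of basis in $\GL_2(k[[u]])$ does not alter shapes at all.) Write this modification as $g=h\,\mathrm{diag}(u,1)\,h'$; it replaces $A_{j+1}$ by $A_{j+1}\varphi(g)$.
\item Generically $A_{j+1}\varphi(g)$ has elementary divisors $u^{r_{j+1,1}+p},u^{r_{j+1,2}}$, which is not $p$-bounded once $u_{j+1}>0$. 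Landing in shape $(r_{j+1,2}+p,r_{j+1,1})$ requires $A_{j+1}\varphi(g)\equiv 0\pmod{u^{r_{j+1,1}}}$.
\item Write $A_{j+1}=P\,\mathrm{diag}(u^{r_{j+1,1}},u^{r_{j+1,2}})\,Q$, and note that $\varphi(h)$ is constant modulo $u^p$. The requirement then says the second row of $Q$ is annihilated modulo $u^{u_{j+1}}$ by a nonzero constant vector, namely the second column of $\varphi(h(0))$.
\item This fails, for instance, for $Q=\begin{pmatrix}1&0\\u&1\end{pmatrix}$ with $u_{j+1}\ge 2$.
\end{itemize}
Hence it is false that every mod $p$ Kisin module of shape $\ru$ can be re-presented in shape $\nu_j(\ru)$ by a change at embedding $j$ or $j+1$.

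Your fallback, checking that a dense family of extensions in $\cZ(\ru)$ lies in $\cZ(f_j(\ru))$, is where the real content would be. It would need three things: an explicit dense family in $\cZ(\ru)$, a description of which extension classes admit crystalline lifts of type $f_j(\ru)$, and a proof that the relevant extension spaces are contained in one another. None of this is supplied.
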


To simplify notation,  write $u_i := r_{i,1} - r_{i,2}$.  If $(u_{j},u_{j+1}) = (1,0)$, it is not hard to see that $\nu_{j}(\mu_j(\ru)) \in  \ru + \Lambda$. Similarly if $(u_{j},u_{j+1}) = (0,p)$ then $\mu_{j}(\nu_j(\ru)) \in  \ru + \Lambda$. Thus if $(u_{j},u_{j+1}) = (1,0)$ we have $\cZ(\mu_j(\ru)) = \cZ(\ru)$, while if $(u_{j},u_{j+1}) = (0,p)$ we have $\cZ(\nu_j(\ru)) = \cZ(\ru)$. In these situations we say that the operations $\mu_j$ and $\nu_j$ are invertible. 

\begin{remark}\label{rem:X-vs-Z}
    One could alternatively define a set  $X = \{ \cX(\ru) : \text{$p$-bounded}\ \ru \}$ by setting $\cX(\ru) = \cZ(\ru)$ if $\ru$ is non-Steinberg (equivalently, if $\cZ(\ru)$ is irreducible) and setting $\cX(\ru) = \cX(\sigma(\ru))$ in the Steinberg case. This would seem to have the advantage that all the stacks in $X$ are irreducible. 
    
    The reason we choose to work with $Z$ rather than $X$ is that Proposition~\ref{prop:operation-inclusion} is no longer quite correct if $\cZ$ is replaced with $\cX$. If $\ru$ is the Hodge type such that $\theta_j(\ru) = \St$, then $\cX(\ru) \not\subseteq \cX(\St)$, as can be seen by checking that there are irreducible representations $\rhobar$ in the support of $\cX(\ru)$ but none in that of $\cX(\St)$. In fact if $\theta_{j}(\ru) = \St$ then we have 
\begin{equation}\label{eq:St-to-BT}
   (\nu_{j+f-1} \circ \cdots \circ \nu_{j+2} \circ \nu_{j+1})(\ru) \in \BT + \Lambda,
\end{equation}
where the composition has $f-1$ factors,
 so that indeed $\cZ(\ru) \subseteq \cZ(\St)$, but with image in the component $\cX(\BT)$ rather than $\cX(\St)$. 

 In any case, since $X$ is very nearly the same as $Z$, it is possible to reformulate all of our results in terms of $X$ rather than $Z$ if one prefers.
\end{remark}

\begin{defn}
    Let $\sim$ denote the equivalence relation on $p$-bounded Hodge types generated by $\ru \sim \mu_j(\ru)$ for valid invertible $\mu_j$,   by $\ru \sim \nu_j(\ru)$ for valid invertible $\nu_j$, and by translation by $\Lambda$. 
\end{defn}

Our main results are as follows.

\begin{thm}\label{thm:equivalence-Z}
We have $\cZ(\ru) = \cZ(\ru')$ if and only if $\ru \sim \ru'$.
\end{thm}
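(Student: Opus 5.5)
The "if" direction is formal, and all the content lies in the "only if" direction.

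\emph{The "if" direction.} It suffices to treat each generating relation of $\sim$.
\begin{itemize}
\item Translation by $\Lambda$ does not change $\cZ(\ru)$, as noted above.
\item If $\mu_j$ is valid and invertible for $\ru$, then $(u_j,u_{j+1})=(1,0)$. Proposition~\ref{prop:operation-inclusion} gives $\cZ(\ru)\subseteq\cZ(\mu_j(\ru))$. For the reverse inclusion, $\mu_j(\ru)$ has $(u_j,u_{j+1})=(0,p)$, so $\nu_j$ is valid for it. Then $\nu_j(\mu_j(\ru))\in\ru+\Lambda$, and a second application of Proposition~\ref{prop:operation-inclusion} yields equality.
\item The case of a valid invertible $\nu_j$ is symmetric.
\end{itemize}
So $\ru\sim\ru'$ implies $\cZ(\ru)=\cZ(\ru')$.

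\emph{The "only if" direction: invariants.} Suppose $\cZ(\ru)=\cZ(\ru')$. I would extract invariants of $\cZ(\ru)$ that determine the $\sim$-class.
\begin{itemize}
\item \textbf{Codimension.} The first invariant is the codimension $\#\{i:u_i=0\}$.
\item \textbf{Irregular case: normal form.} Using invertible moves, I would put $\ru$ into a normal form. The move $\mu_j$ at $(1,0)$ turns $(1,0)$ into $(0,p)$, and $\nu_j$ turns $(0,p)$ back into $(1,0)$. So each $\sim$-class should contain a representative with no pattern $(u_j,u_{j+1})=(0,p)$, say, obtained by replacing every such pair by $(1,0)$. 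Termination is clear if we count entries equal to $p$ that sit after a $0$, since $\nu_j$ strictly decreases this count. One must also check that this representative is unique up to $\Lambda$. The degenerate cases, where every $u_i\in\{0,p\}$ or every $u_i\in\{0,1\}$ cyclically, need a separate look.
\item \textbf{Regular case.} Here $\cZ(\ru)$ is $\cX(\sigma(\ru))$, or a union of two components in the Steinberg case. Serre weights are distinct components, and the fibers of $\sigma$ are exactly the $\Lambda$-classes. Hence equality forces $\ru'\in\ru+\Lambda$. The Steinberg versus non-Steinberg case is distinguished by the number of components.
\end{itemize}

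\emph{Recovering the normal form from the closed points.} The heart of the argument is to show that two normal-form Hodge types with $\cZ(\ru)=\cZ(\ru')$ differ by $\Lambda$. I would do this by computing the semisimple $\Fpbar$-points of $\cZ(\ru)$. By irreducibility (\cite{bellovin2024irregular}), $\cZ(\ru)$ is determined by its generic behaviour. A concrete route is to use Proposition~\ref{prop:operation-inclusion}: apply valid $\nu$ or $\mu$ operations to the irregular embeddings to reach a regular Hodge type $\ru^+$. Then $\cZ(\ru)$ lies inside the component $\cX(\sigma(\ru^+))$, with codimension matching the number of irregular positions.

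I would then compute the set of irreducible (niveau $f$ or $2f$) points of $\cZ(\ru)$ via explicit crystalline lifts, i.e.\ inductions of crystalline characters with the given labeled weights. From these I would read off the tuple of "tame exponents" $\sum p^{-i}(\cdot)\bmod (q-1)$ or $(q^2-1)$, which pins down $\ru$ up to $\Lambda$ and up to the local ambiguities $(1,0)\leftrightarrow(0,p)$.

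\emph{Main obstacle.} The hard step is proving that the semisimple points, or some finer invariant, separate distinct normal forms. Coincidences of the tame exponents can occur, and these are precisely the invertible moves plus possible extra degenerate coincidences. I expect to need a combinatorial comparison: equality of the sets of niveau-$2f$ inertial types forces equality of the digit strings $(u_i)$ modulo the allowed local substitutions. This is where the two degenerate exceptions mentioned in the abstract would have to be handled. If closed points do not suffice, I would use the inclusion of one stack into another's generic extension data (the $\nu$-chain in \eqref{eq:St-to-BT}) to separate the remaining cases.
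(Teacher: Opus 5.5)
Your ``if'' direction is correct and is exactly the paper's argument: an invertible $\mu_j$ or $\nu_j$ is undone, up to $\Lambda$, by a valid move in the other direction, so Proposition~\ref{prop:operation-inclusion} gives both inclusions. The ``only if'' direction, however, has a genuine gap. The step you label the main obstacle is the entire content of the theorem, and the proposal does not supply it. What must be shown is that $\Zss(\sigma)=\Zss(\sigma')$ forces $\sigma\sim\sigma'$ unless $\{\sigma,\sigma'\}=\{(\BT;t),(\St;t)\}$. (That pair is then separated at the level of stacks by counting components, as you note.)

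One cannot simply ``read off'' the string $(u_i)$ from the exponents $\sum_{i\in J}p^{-i}u_i$. An equality $\rhobar(\sigma,J)=\rhobar(\sigma',K_J)$ is only a congruence, and the matching profile $K_J$ varies with $J$. The discrepancy is measured by carries $x^J_i$ satisfying $\eps^{K_J}_iu'_i-\eps^J_iu_i=px^J_{i-1}-x^J_i$ (Lemma~\ref{lem:carry-equation}). Controlling these carries jointly across profiles (bounds, parity, Lemma~\ref{lem:alt}) is where all the work lies. Saying that coincidences of exponents ``are precisely the invertible moves'' restates the theorem rather than proving it.

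The missing idea is twofold. First, the paper does not use an intrinsic normal form of $\sigma$; it uses one relative to $\sigma'$. Among all $\widetilde\sigma\sim\sigma$ and all pinnings, it takes one minimizing $\sum_i|x_i|$. Lemma~\ref{lem:minimal-reductions} then rules out exactly the local configurations that an invertible move would shorten.

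Second, it proves an inclusion statement rather than separating point sets directly. Theorem~\ref{thm:key}, via a case analysis on the sign pattern of the $x_i$, shows the following: if $\Zss(\sigma)\subseteq\Zss(\sigma')$ with $\sigma\neq\sigma'$, then either $\{\sigma,\sigma'\}$ is $\{(\ur;t),(\crit;t)\}$ or $\{(\BT;t),(\St;t)\}$, or there is a valid non-invertible $f_j$ with $\Zss(f_j(\sigma))\subseteq\Zss(\sigma')$. Combined with Proposition~\ref{prop:strict-operations}, which says non-invertible valid moves strictly enlarge $\Zss$, equality of point sets can only occur when $\widetilde\sigma=\sigma'$ or in the $\BT$/$\St$ case. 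Your plan would need an argument of comparable strength at this point.

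Three smaller issues:
\begin{enumerate}
\item Your termination measure for the normal form does not strictly decrease. For example, $(0,p,p)\mapsto(1,0,p)$ under $\nu_0$ keeps one $p$ sitting after a $0$. The number of entries equal to $p$ does decrease, so that measure works instead.
\item Uniqueness of the normal form is left unproved.
\item The ``degenerate cases'' in the abstract are the inclusions $\cZ(\ur+\lambda)\subsetneq\cZ(\crit+\lambda)$ and $\cZ(\BT+\lambda)\subsetneq\cZ(\St+\lambda)$. They are not extra coincidences of point sets: the only coincidence not explained by $\sim$ is $\Zss(\BT+\lambda)=\Zss(\St+\lambda)$.
\end{enumerate}
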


If $\cZ \subseteq \cZ'$ is an inclusion of elements of $Z$, let us say that the inclusion is \emph{simple} if $\cZ \subsetneq \cZ'$ and furthermore there does not exist $\cY \in Z$ with $\cZ \subsetneq \cY \subsetneq \cZ'$.

\begin{thm}\label{thm:simple-inclusions}
Suppose $\cZ \subsetneq \cZ'$ is a simple inclusion in $Z$. Then either:
\begin{itemize}
    \item there exists a $p$-bounded Hodge type $\ru$ and a valid operation $f_j \in \{\mu_j,\theta_j,\nu_j\}$ such that $\cZ = \cZ(\ru)$ and $\cZ' = \cZ(f_j(\ru))$,

    \item we have $\cZ = \cZ(\ur + \lambda)$ and $\cZ' = \cZ(\crit + \lambda)$ for some $\lambda$, or

    \item we have $\cZ = \cZ(\BT + \lambda)$ and $\cZ' = \cZ(\St + \lambda)$  for some $\lambda$.
\end{itemize}
\end{thm}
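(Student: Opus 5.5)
The plan is to describe every inclusion $\cZ(\ru) \subseteq \cZ(\ru')$ combinatorially by means of closed points, and then read off the simple ones. Any $\cZ(\ru)$ in $Z$ is a closed substack that is either irreducible (by \cite{bellovin2024irregular}, when $\ru$ is non-Steinberg) or the union of two components (when $\ru$ is Steinberg). So an inclusion $\cZ(\ru) \subseteq \cZ(\ru')$ holds exactly when the generic point of each irreducible component of $\cZ(\ru)$ lies in $\cZ(\ru')$.

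\emph{Step 1: describe the points explicitly.} For each $p$-bounded $\ru$, I would write down the set of semisimple $\rhobar$ in the support of $\cZ(\ru)$, using the explicit description of the reductions of $p$-bounded crystalline representations (via Kisin modules or Breuil--Kisin modules of bounded height). This is essentially the ``Serre weight'' set generalized to phantom weights. I would also describe the generic $\rhobar$ on each component: a non-split extension $\chi_1$ by $\chi_2$ of characters, whose restrictions to inertia are determined by $\ru$ up to the symmetries generated by invertible $\mu_j,\nu_j$ and by $\Lambda$. Theorem~\ref{thm:equivalence-Z} is taken as given, so I work with $\sim$-classes.

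\emph{Step 2: the combinatorial criterion.} Next I would prove a lemma: if $\cZ(\ru) \subsetneq \cZ(\ru')$ with $\ru$ non-Steinberg, then there exists a chain of valid operations from some representative of the class of $\ru$ to a representative of the class of $\ru'$. The exceptions are the pairs $\ur \subset \crit$ and $\BT \subset \St$ (up to translation). To prove this I would argue by induction on the codimension difference $\#\{i:u_i=0\} - \#\{i:u'_i=0\}$, plus a secondary invariant measuring how far $\ru$ is from $\ru'$ (for instance $\sum_i |u_i - u'_i|$ after normalizing the translates). The key move is the following. Given the inclusion, compare the inertial characters of the generic point of $\cZ(\ru)$ with the list of shapes allowed in $\cZ(\ru')$. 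At some index $j$ where they differ, there is a valid $f_j$ with $\cZ(f_j(\ru)) \subseteq \cZ(\ru')$, and this is checked again on generic points. Proposition~\ref{prop:operation-inclusion} then gives $\cZ(\ru) \subseteq \cZ(f_j(\ru))$, so the inclusion factors. If $\cZ(\ru)=\cZ(f_j(\ru))$, the operation is invertible and we have only changed representative. The Steinberg target and source are handled using \eqref{eq:St-to-BT} and the decomposition $\cZ(\St+\lambda)=\cZ(\BT+\lambda)\cup\cX(\sigma)$.

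\emph{Step 3: read off the simple inclusions.} Once every strict inclusion factors as a chain $\cZ(\ru)\subseteq \cZ(f^{(1)}\ru)\subseteq\cdots$, a simple inclusion must be a single step. This is either a single valid $f_j$, or one of the two degenerate pairs. The degenerate pairs arise because $\ur$ and $\crit$ (respectively $\BT$ and $\St$) have the same shape at every embedding, so no single $f_j$ connects them, yet the inclusion holds; the first of these is the phenomenon that a weight $0$ unramified locus lies in $\Sym^{p-2}$-type loci. I would verify separately that these two inclusions admit no intermediate element, by a dimension and point count.

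The main obstacle is Step 2: producing the index $j$ and showing that $f_j(\ru)$ still lies under $\cZ(\ru')$. This requires a careful case analysis on the pairs $(u_j,u_{j+1})$, particularly near the boundary values $0$, $1$, $p-1$, $p$, where the reduction descriptions degenerate. I would first try to reduce to the case where $\ru$ is regular at all indices except a controlled set, and handle carries in the $p$-adic expansion $\sum p^{-i}\lambda_i$ explicitly.
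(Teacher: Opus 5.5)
There is a genuine gap. It sits where you locate the main obstacle, but it is worse than a missing case analysis: the mechanism of Step~2 cannot work as stated.

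\textbf{Generic points do not give a method.} Deciding whether the generic point of $\cZ(f_j(\ru))$ lies in $\cZ(\ru')$ is just a restatement of the inclusion $\cZ(f_j(\ru))\subseteq\cZ(\ru')$. Comparing inertial characters does not decide it, because membership of a non-split $\rhobar$ in $\cZ(\ru')$ depends on the extension class. The pair $\BT/\St$ already shows this: $\Zss(\BT+\lambda)=\Zss(\St+\lambda)$, yet $\cZ(\St+\lambda)\not\subseteq\cZ(\BT+\lambda)$. No description of the relevant extension classes for irregular $\ru'$ is available to you. The only tool that produces stack inclusions is Proposition~\ref{prop:operation-inclusion}.

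The missing structural idea is that of Proposition~\ref{prop:reduction}. Work entirely in the finite poset $\rZss$ of closed-point sets, which \cite{gls12} makes explicit. Prove there that every simple inclusion is given by a valid operation or by $\Zss(\ur+\lambda)\subsetneq\Zss(\crit+\lambda)$. Also prove the point-set equality criterion of Theorem~\ref{thm:Zss}$(i)$; you need this, not Theorem~\ref{thm:equivalence-Z}, to glue consecutive steps. Any chain of simple point-set inclusions then lifts step by step to stack inclusions via Proposition~\ref{prop:operation-inclusion}. Hence inclusions in $Z$ are detected on closed points away from the $\BT/\St$ pair, and the simple inclusions in $Z$ are read off from those in $\rZss$.

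\textbf{No rule for choosing $j$ and $f_j$.} Even on point sets, ``an index where they differ'' does not say which of $\mu_j,\theta_j,\nu_j$ to apply. Nor is $\sum_i|u_i-u'_i|$ a quantity that decreases. In the paper, write $\sigma,\sigma'$ for the phantom Serre weights of $\ru,\ru'$. The choice is then dictated by the carries $x_i\in[-2,2]$ of Lemma~\ref{lem:carry-equation} for $\rhobar(\sigma,\varnothing)=\rhobar(\sigma',K)$:
\begin{itemize}
\item First replace $\sigma$ within its $\sim$-class to minimize $\sum_i|x_i|$. This excludes exactly the configurations where an invertible $\mu_j$ or $\nu_j$ applies (Lemma~\ref{lem:minimal-reductions}).
\item A sign change $x_{j-1}\le 0<x_j$ yields $\mu_j$ or $\theta_j$, according as $u_j>0$ or $u_j=0$.
\item When all $x_i\le 0$, a change $x_{j-1}=0>x_j$ yields either $\nu_{j'}$ at the end of a run $u_j=\cdots=u_{j'}=0$, or else $\theta_j$.
\item Carries of constant sign force $\{\sigma,\sigma'\}$ to be $\{(\ur;t),(\crit;t)\}$ or $\{(\BT;t),(\St;t)\}$. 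This is the actual source of the two degenerate cases, not the heuristic you give.
\end{itemize}
Each inclusion $\Zss(f_j(\sigma))\subseteq\Zss(\sigma')$ is then verified profile by profile, using explicit profiles of the form $K_J\triangle T$. Proposition~\ref{prop:strict-operations} shows that the chosen operation strictly enlarges the point set, which your point-set induction would need.

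Finally, your Step~3 check that the degenerate pairs admit no intermediate element is not needed for this statement, which only constrains simple inclusions.
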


Note that in the second case there is indeed an inclusion
$\cZ(\ur+\lambda)\subseteq \cZ(\crit+\lambda)$:\ unramified two-dimensional representations of $G_K$ have crystalline lifts with all labeled Hodge--Tate weights equal to $(p-1,0)$.

A  consequence of Theorem~\ref{thm:simple-inclusions}, and the reason we became interested in these statements in the first place, is that if one wants to use induction on (co)dimension to prove theorems about the stacks $\cZ(\ru)$, as we do in \cite{KLSlgc}, then the only ``moves'' that one has available in general are the operations $\mu_j$, $\theta_j$, and $\nu_j$. Philosophically this suggests that there is indeed something fundamental about the operation $\nu_j$, alongside $\mu_j$ and $\theta_j$ with their evident automorphic interpretations. 

As a consequence of Theorems~\ref{thm:equivalence-Z} and~\ref{thm:simple-inclusions} we have the following:

\begin{cor}\label{cor:chain}
    If $\cZ \subseteq \cZ'$ is an inclusion of elements of $Z$, then either 
    \begin{itemize}
    \item we have $\cZ = \cZ(\ur + \lambda)$ and $\cZ' = \cZ(\crit + \lambda)$  for some $\lambda$, or
    \item there is a chain 
    \[ \cZ = \cZ_0 \subsetneq \cdots \subsetneq \cZ_n = \cZ'\]
    of simple inclusions for some $n \ge 0$, and there exist $p$-bounded Hodge types $\ru_0,\ldots,\ru_n$ such that $\cZ_m = \cZ(\ru_m)$ for all $0 \le m \le n$, and for every $1 \le m \le n$ we have either:
   \begin{itemize}
       \item $\cZ_m = \cZ(f^{(m)}(\ru_{m-1}))$ for some valid  $f^{(m)}$,  and  $f^{(m)}(\ru_{m-1}) \sim \ru_m$, or 
       \item $m=n$ and the inclusion $\cZ_{n-1} \subsetneq \cZ_n$ is some $\cZ(\BT + \lambda) \subsetneq \cZ(\St + \lambda)$.
       \end{itemize}
   \end{itemize} 
\end{cor}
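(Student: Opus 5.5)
The corollary should follow formally from Theorems~\ref{thm:equivalence-Z} and~\ref{thm:simple-inclusions}, together with the fact that $Z$ is a finite poset. First I will check finiteness. Up to translation by $\Lambda$, there are only finitely many $p$-bounded Hodge types: each $u_i$ lies in $\{0,\dots,p\}$, and $\Z^{\Z/f\Z}/\Lambda$ is finite, of order $q-1$. Hence $Z$ is finite.

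Given an inclusion $\cZ \subseteq \cZ'$, I will refine it to a maximal chain $\cZ = \cZ_0 \subsetneq \cdots \subsetneq \cZ_n = \cZ'$ of elements of $Z$. Such a chain exists because strictly increasing chains in a finite poset have bounded length. By maximality, each step $\cZ_{m-1} \subsetneq \cZ_m$ is simple, so Theorem~\ref{thm:simple-inclusions} applies to it and puts it in one of three cases.

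In the first case, $\cZ_{m-1} = \cZ(\ru)$ and $\cZ_m = \cZ(f_j(\ru))$ for some $p$-bounded $\ru$ and valid $f_j$. The main bookkeeping point is that the types $\ru_m$ must be chosen compatibly along the chain. I will build them inductively. Take $\ru_0$ to be any type with $\cZ(\ru_0) = \cZ$. Given $\ru_{m-1}$, we have $\cZ(\ru) = \cZ(\ru_{m-1})$, so $\ru \sim \ru_{m-1}$ by Theorem~\ref{thm:equivalence-Z}. The statement requires $f^{(m)}$ to be valid for $\ru_{m-1}$ itself, which would need some transport of operations along $\sim$, and such a transport is not obviously available. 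I will avoid it by redefining $\ru_{m-1} := \ru$. This is harmless, since the only requirement on $\ru_{m-1}$ from the previous step is $f^{(m-1)}(\ru_{m-2}) \sim \ru_{m-1}$, and $\ru \sim \ru_{m-1}$. Having reset $\ru_{m-1}$ so that $f^{(m)} = f_j$ is valid for it, I set $\ru_m := f_j(\ru)$; then $f^{(m)}(\ru_{m-1}) = \ru_m$ trivially. To keep the previous step's condition intact, I will make all choices in a single backward-compatible sweep: fix the chain first, then for each step record the type $\ru^{(m)}$ and operation it provides. The required relation $f^{(m)}(\ru^{(m-1)}) \sim \ru^{(m)}$ between consecutive steps then follows from Theorem~\ref{thm:equivalence-Z}, because both sides have the same $\cZ$.

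The remaining work is to exclude the second and third cases of Theorem~\ref{thm:simple-inclusions} except where the corollary allows them.

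Suppose some step is $\cZ(\ur+\lambda) \subsetneq \cZ(\crit+\lambda)$. I expect to show this forces $n=1$ and $\cZ = \cZ(\ur+\lambda)$, $\cZ' = \cZ(\crit+\lambda)$. First, $\cZ(\crit+\lambda)$ is an irreducible component, since $\crit$ is regular and non-Steinberg, and no element of $Z$ strictly contains it except possibly a Steinberg locus, which is a union of two other components. Second, $\cZ(\ur+\lambda)$ has codimension $f$, which is minimal dimension $0$, so nothing in $Z$ lies strictly below it. Rather than argue this way, the cleaner route is the corollary's own dichotomy: if the given inclusion is itself of type $\ur \subseteq \crit$, we are in the first bullet. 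Otherwise, I will choose the maximal chain to avoid this step, and the hard part is to show that such a chain exists. I would try to prove that whenever $\cZ(\ur+\lambda) \subsetneq \cZ(\crit+\lambda) \subseteq \cZ'$ or $\cZ \subseteq \cZ(\ur+\lambda)$, the inclusion can be rerouted. For instance, $\cZ(\ur+\lambda) \subseteq \cZ(\BT+\lambda')$ via $\nu$ and $\theta$ moves, reaching $\cZ'$ through another path. I will check this by direct dimension and point considerations.

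For the third case, a step $\cZ(\BT+\lambda) \subsetneq \cZ(\St+\lambda)$ can occur only at the end. The reason is that $\cZ(\St+\lambda)$ is a union of components of the top dimension, and I expect to show it is maximal in $Z$: a strictly larger element would have to be a union containing two components, hence Steinberg, hence equal to it.

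I expect rerouting around the $\ur \subseteq \crit$ step to be the main obstacle.
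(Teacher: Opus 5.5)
You have several pieces right and they match the paper. Refining to a maximal chain works, and your finiteness argument for $Z$ is a fine substitute for the paper's appeal to dimension. Your bookkeeping is also right: take $\ru_{m-1}$ to be the source type that Theorem~\ref{thm:simple-inclusions} supplies for the $m$-th step, then get $f^{(m)}(\ru_{m-1})\sim\ru_m$ from Theorem~\ref{thm:equivalence-Z}. Finally, a step $\cZ(\BT+\lambda)\subsetneq\cZ(\St+\lambda)$ is forced to be last because $\cZ(\St+\lambda)$ is maximal in $Z$. The problem is the $\ur\subseteq\crit$ step. There the proposal, as written, does not close. You set aside the correct argument (``Rather than argue this way'') in favour of a rerouting strategy, which you never carry out and yourself call the main obstacle.

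The rerouting is unnecessary, and it targets configurations that cannot occur. The two facts you wrote down just before abandoning them already settle the case.
\begin{itemize}
\item Nothing in $Z$ lies strictly below $\cZ(\ur+\lambda)$, since it is irreducible of dimension $0$.
\item Nothing in $Z$ lies strictly above $\cZ(\crit+\lambda)$. It is the irreducible component for a Serre weight built from $\Sym^{p-2}$, so a strictly larger element would have to be a Steinberg locus $\cZ(\St+\lambda')=\cZ(\BT+\lambda')\cup\cX(\sigma(\St+\lambda'))$. An irreducible component contained in this union must equal one of the two pieces, whose Serre weights involve $\Sym^0$ and $\Sym^{p-1}$ instead.
\end{itemize}
Hence in any chain of strict inclusions, a step $\cZ(\ur+\lambda)\subsetneq\cZ(\crit+\lambda)$ is simultaneously the first and the last step. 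So $n=1$, $\cZ=\cZ(\ur+\lambda)$, $\cZ'=\cZ(\crit+\lambda)$, and you are in the first bullet. If the given inclusion is not of this form, then no maximal chain from $\cZ$ to $\cZ'$ contains such a step at all, so there is nothing to reroute. The situations you propose to reroute around, $\cZ(\crit+\lambda)\subsetneq\cZ'$ or $\cZ\subsetneq\cZ(\ur+\lambda)$, are exactly what those two facts exclude. Replace the rerouting paragraph with this observation and your argument becomes the paper's proof.
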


\begin{proof}
The existence of a chain $\cZ = \cZ_0 \subsetneq \cdots \subsetneq \cZ_n = \cZ'$ of simple inclusions follows for dimension reasons. If one of these inclusions has the form $\cZ(\ur + \lambda) \subsetneq \cZ(\crit + \lambda)$, then $n=1$:\ for dimension reasons there are no proper inclusions into $\cZ(\ur + \lambda)$  or out of 
$\cZ(\crit + \lambda)$. Similarly any inclusion in the chain that is of the form $\cZ(\BT + \lambda) \subsetneq \cZ(\St + \lambda)$ must be last in the chain. Then Theorems~\ref{thm:equivalence-Z} and~\ref{thm:simple-inclusions} imply that the chain has the asserted form.
\end{proof}

It remains to describe precisely which non-invertible valid operations give simple inclusions;\ or equivalently, which ones do not. Note that on dimension grounds, the latter would require either reducing the number of irregular embeddings by at least two, or increasing the number of irreducible components. This can  occur only if $f_j(\ru)$ is Steinberg, or if $(u_{j},u_{j+1}) = (0,0)$ and $f_j = \theta_j$ or $\nu_j$. (Note that in the latter situation $\theta_j$ and $\nu_j$ are equal.) The actual behavior is as follows.

\begin{thm}\label{thm:non-simple-inclusions}
Each of the inclusions listed in Theorem~\ref{thm:simple-inclusions} is simple except for the ones of the form 
$\cZ(\ru) \subseteq \cZ(f_j(\ru))$ with either $f_j$ invertible (in which case the inclusion is an equality) or else of the form
\begin{itemize}
    \item $f_j \in \{ \theta_j, \nu_j\}$, $(u_{j},u_{j+1}) = (0,0)$, and either $u_{j-1} = 1$ or $u_{j+2} = p$; or
    \item $f_j = \theta_j$ and $\theta_j(\ru)$ is Steinberg,
\end{itemize}
in which case the inclusion is proper but not simple.
\end{thm}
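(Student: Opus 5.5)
Each inclusion $\cZ(\ru)\subsetneq\cZ(f_j(\ru))$ is simple unless one of the listed configurations occurs. The plan is to prove this by dimension and component counting, and to exhibit an explicit intermediate stack in each exceptional case. Recall that $\cZ(\ru)$ has codimension $c(\ru):=\#\{i:u_i=0\}$, and that it is irreducible unless $\ru$ is Steinberg.

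\textbf{Step 1: codimension change.} A valid $\mu_j$ or $\theta_j$ changes $u_{j+1}$ from $0$ to $p$. A valid $\nu_j$ changes $u_j$ from $0$ to $1$ and sends $u_{j+1}$ to $p-u_{j+1}$. So $c$ drops by exactly one, except in two cases:
\begin{itemize}
\item $(u_j,u_{j+1})=(0,0)$ with $f_j\in\{\theta_j,\nu_j\}$, where $c$ drops by two;
\item invertible moves, where $c$ is unchanged and Theorem~\ref{thm:equivalence-Z} gives equality.
\end{itemize}
When $c$ drops by one and the target is irreducible, any $\cY$ strictly between the two stacks has an irreducible component of intermediate dimension, which is impossible. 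Here I use that every element of $Z$ is equidimensional, and irreducible unless Steinberg. If the target is $\cZ(\St+\lambda)$ via $\mu_j$ or $\nu_j$, then $\cZ(\ru)$ lands in the $\cX(\sigma)$ component or the $\BT$ component. For $\nu_j$ one checks that $\nu_j^{-1}(\St)$ and $\BT$ have the same codimension, so the claim reduces to comparing components. The $\theta_j$ case is handled in Step 3.

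\textbf{Step 2: the $(0,0)$ case.} Here $c$ drops by two, so a simple inclusion requires that no $\cY\in Z$ of intermediate codimension lies between. If $u_{j-1}=1$, I construct an intermediate stack. Apply $\mu_{j-1}$ to $\ru$; this is valid since $u_j=0$ and $u_{j-1}>0$. The result has $u_{j-1}=0$, $u_j=p$, and still $u_{j+1}=0$. Then check that $\theta_j$ (equivalently $\nu_j$) of $\ru$ is reachable from $\mu_{j-1}(\ru)$ by an invertible $\nu_{j-1}$ followed by a valid move, giving a strict intermediate. The key computation is $\nu_{j-1}\mu_{j-1}\in\Lambda$-translates. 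If $u_{j+2}=p$, the argument is symmetric: $\theta_j(\ru)$ has $(u_{j+1},u_{j+2})=(p,p)$, and one instead inserts $\nu_{j+1}$-type moves. I would verify this using the identity $\mu_j\nu_j\equiv\mathrm{id}$ modulo $\Lambda$ on $(0,p)$.

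Conversely, if $u_{j-1}\ne1$ and $u_{j+2}\ne p$, any intermediate $\cY=\cZ(\ru')$ has codimension $c(\ru)-1$. I would then use Theorem~\ref{thm:simple-inclusions}, applied to the simple inclusions $\cZ(\ru)\subsetneq\cY$, to get $\cY=\cZ(g(\ru''))$ with $\ru''\sim\ru$ and $g$ a non-invertible valid single move. One then needs to show that $\cY\not\subseteq\cZ(\theta_j\ru)$. For this I would exhibit a closed point: a semisimple $\rhobar$ (for instance a reducible one with specified characters) supported on $\cY$ but not on $\cZ(\theta_j\ru)$. The support is computed through the explicit Serre-weight or character descriptions of \cite{bellovin2024irregular}.

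\textbf{Step 3: the Steinberg case for $\theta_j$.} Equation~\eqref{eq:St-to-BT} gives $\cZ(\ru)\subseteq\cZ(\BT+\lambda)\subsetneq\cZ(\St+\lambda)$, which shows non-simplicity. The first inclusion is strict because $c(\ru)=1$ while $c(\BT)=0$.

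The remaining cases, namely the listed inclusions $\cZ(\ur+\lambda)\subseteq\cZ(\crit+\lambda)$ and $\cZ(\BT+\lambda)\subseteq\cZ(\St+\lambda)$, are simple for dimension reasons. In the first, nothing of intermediate codimension $1,\dots,f-1$ contains all unramified points, which is checked by a closed-point argument. In the second, the two stacks have equal dimension and differ by one component.

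The main obstacle is the converse direction of Step 2: ruling out intermediate stacks when $u_{j-1}\ne1$ and $u_{j+2}\ne p$. This requires explicit knowledge of closed points of the $\cZ(\ru')$, and I expect to rely on the support descriptions of \cite{bellovin2024irregular} for that.
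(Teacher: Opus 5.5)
Your plan follows the paper's structure at the level of stacks. Codimension counting handles every valid non-invertible move except $(u_j,u_{j+1})=(0,0)$ with $f_j\in\{\theta_j,\nu_j\}$. In that case a putative intermediate is written as $\cZ(g(\ru''))$ with $\ru''\sim\ru$ and $g$ a single non-invertible move, via Theorems~\ref{thm:equivalence-Z} and~\ref{thm:simple-inclusions}.

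\textbf{The main gap.} The step you call the main obstacle is the whole content of the theorem beyond dimension counting, and you do not supply it. Two ingredients are missing.

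First, you must reduce to $\ru''=\ru$. When $u_{j-1}\ne1$ and $u_{j+2}\ne p$, the only invertible moves touching $u_{j-1},\dots,u_{j+2}$ are $\mu_{j-2}/\nu_{j-2}$, which toggle $u_{j-1}$ between $0$ and $p$, and $\mu_{j+2}/\nu_{j+2}$, which toggle $u_{j+2}$ between $1$ and $0$. These moves preserve the hypotheses and commute with $\theta_j$, so $\theta_j(\ru'')\sim\theta_j(\ru)$. This is exactly where $u_{j-1}\ne1$ and $u_{j+2}\ne p$ are used, and your plan never says where they would enter.

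Second, you need a specific closed point and a proof that it misses $\cZ(\theta_j\ru)$. The paper argues as follows:
\begin{itemize}
\item It takes the witness of Proposition~\ref{prop:strict-operations} for $g=f_k$. This is a niveau-one point, with profile $\{k+1\}$ if $g=\mu_k$ and $\varnothing$ otherwise.
\item Since $u_j=u_{j+1}=0$, the niveau-one values of $\theta_j(\sigma)$ are $\cL(\theta_j(\sigma),J)=\cL(\sigma,J)+\gamma p^{-j}$ with $\gamma=\delta_J(j)+\delta_J(j+1)-1\in\{-1,0,1\}$.
\item A carry argument follows. Write the relevant difference as $y_i=pz_{i-1}-z_i$ with $|z_i|\le1$. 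If $k\ne j$, then $y_{j+1}\in\{0,1\}$, which forces $z_j=0$, and then $\gamma=y_j=pz_{j-1}=0$.
\item With $\gamma=0$ the witness would lie in $\Zss(\sigma)$, a contradiction. If $k=j$, then $\cY=\cZ(\theta_j\ru)$ itself, also a contradiction.
\end{itemize}
Without an argument of this kind, nothing excludes an intermediate stack.

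\textbf{Errors in the steps you do carry out.}
\begin{itemize}
\item Your intermediate for $u_{j-1}=1$ does not exist as described. $\mu_{j-1}$ is invertible on $(u_{j-1},u_j)=(1,0)$, and following it with the invertible $\nu_{j-1}$ just returns to $\ru$, so no intermediate stack appears. The correct chain starts with $\ru\sim\mu_{j-1}(\ru)$, so that now $u_j=p$ and $u_{j+1}=0$. Next apply the non-invertible $\mu_j$. Afterwards $(u_{j-1},u_j)=(0,p-1)$, so $\nu_{j-1}$ is non-invertible and $\nu_{j-1}\mu_j\mu_{j-1}(\ru)=\theta_j(\ru)$. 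The intermediate is $\cZ(\mu_j\mu_{j-1}(\ru))$.
\item When $u_{j+2}=p$, one has $\theta_j(\ru)=\theta_{j+1}\nu_j\nu_{j+1}(\ru)$, with intermediate $\cZ(\nu_j\nu_{j+1}(\ru))$.
\item For $\cZ(\ur+\lambda)\subseteq\cZ(\crit+\lambda)$, your claim that nothing of codimension $1,\dots,f-1$ contains all unramified points is false for $f\ge3$. For example, $\cZ(\theta_j(\ur+\lambda))$ contains $\cZ(\ur+\lambda)$ and has codimension $f-2$. What you need is that no such stack is \emph{contained in} $\cZ(\crit+\lambda)$. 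That is Proposition~\ref{prop:critical-target}, which the paper proves by a base-$p$ digit argument.
\item A valid $\mu_j$ or $\nu_j$ never produces a Steinberg type, so your discussion of that case in Step~1 is vacuous. Only $\theta_j$ with $u_j=p-1$ can land on a Steinberg type.
\end{itemize}
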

In the final case, if $\theta_j(\ru) = \St + \lambda$ we have already seen that $\cZ(\ru) \subsetneq \cZ(\BT+\lambda) \subsetneq \cZ(\St + \lambda)$. In the first case, if for example $u_{j-1} = 1$, then $\theta_j(\ru) \sim \nu_{j-1}(\mu_{j}(\mu_{j-1}(\ru)))$, and $\cZ(\mu_{j}(\mu_{j-1}(\ru)))$ is intermediate between $\cZ(\ru)$ and $\cZ(\theta_j(\ru))$.  The other cases of non-simplicity are similar.

It is possible to reduce each of Theorems~\ref{thm:equivalence-Z},~\ref{thm:simple-inclusions}, and~\ref{thm:non-simple-inclusions}   to statements that are essentially combinatorial, as we now explain.

\begin{defn}
If $\ru$ is a Hodge type, define $\Zss(\ru)$ to be the set of semisimple representations $\rhobar : G_K \to \GL_2(\Fpbar)$ on which $\cZ(\ru)$ is supported, i.e., the set of semisimple~$\rhobar$ having a crystalline lift of Hodge type $\ru$;\ equivalently, $\Zss(\ru)$ is the collection of closed points of $\cZ(\ru)$. 

Write $\rZss$ for the set  $\{ \Zss(\ru) : \text{$p$-bounded}\ \ru \}$.
\end{defn}

Since the semisimple representations at which $\cX(\St)$ is supported also lie in the support of $\cX(\BT)$, we have $\Zss(\BT + \lambda) = \Zss(\St + \lambda)$.  The main combinatorial claim is that versions of Theorems~\ref{thm:equivalence-Z},~\ref{thm:simple-inclusions}, and~\ref{thm:non-simple-inclusions} hold  with $\cZ$ replaced by $\Zss$, suitably modified to account for the equality $\Zss(\BT + \lambda) = \Zss(\St + \lambda)$.

\begin{thm}\label{thm:Zss}
   We have the following.
 \begin{enumerate}
     \item   We have $\Zss(\ru) = \Zss(\ru')$ if and only if  either $\ru \sim \ru'$ or else there exists $\lambda$ such that either $(\ru \sim \BT + \lambda$ and $\ru' \sim \St + \lambda)$ or $(\ru' \sim \BT + \lambda$ and $\ru \sim \St + \lambda)$. 

\item Suppose $\Zss \subsetneq \Zss'$ is a simple inclusion in $\rZss$. Then either:
\begin{itemize}
    \item there exists a $p$-bounded Hodge type $\ru$ and a valid operation $f_j \in \{\mu_j,\theta_j,\nu_j\}$ such that $\Zss = \Zss(\ru)$ and $\Zss' = \Zss(f_j(\ru))$, or

    \item we have $\Zss = \Zss(\ur + \lambda)$ and $\Zss' = \Zss(\crit + \lambda)$ for some $\lambda$.
\end{itemize}

\item Each of the inclusions listed in $(ii)$ is simple except for the ones of the form 
$\Zss(\ru) \subsetneq \Zss(f_j(\ru))$ with $f_j \in \{ \theta_j, \nu_j\}$, $(u_{j},u_{j+1}) = (0,0)$ and either $u_{j-1} = 1$ or $u_{j+2} = p$.
\end{enumerate}
\end{thm}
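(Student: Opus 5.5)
The plan is to make everything explicit on the Galois side. The first step is an explicit description of $\Zss(\ru)$ for $p$-bounded $\ru$. This should be available from the known description of semisimple reductions of crystalline representations with $p$-bounded Hodge type, i.e.\ the (generalized) weight part of Serre's conjecture together with \cite{bellovin2024irregular} for irregular types.

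For $\rhobar$ reducible, write $\rhobar = \chi_1\oplus\chi_2$. Then $\rhobar\in\Zss(\ru)$ if and only if there is a subset $J\subseteq \Z/f\Z$ such that $\chi_1|_{I_K}$ and $\chi_2|_{I_K}$ are the products of fundamental characters
\[ \prod_{i\in J}\omega_i^{r_{i,1}}\prod_{i\notin J}\omega_i^{r_{i,2}} \quad\text{and}\quad \prod_{i\in J}\omega_i^{r_{i,2}}\prod_{i\notin J}\omega_i^{r_{i,1}}, \]
with the unramified parts free. For $\rhobar$ irreducible there is the analogous condition in terms of level-$2f$ fundamental characters and subsets of $\Z/2f\Z$ satisfying the usual compatibility. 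I would record both descriptions as sets of "exponent vectors modulo $q-1$" (respectively modulo $q^2-1$). After that, each part of the theorem becomes a statement about these finite sets of residues.

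Part (i). The "if" direction is immediate. Translation by $\Lambda$ changes no characters, invertible $\mu_j,\nu_j$ give equal $\cZ$ by the discussion preceding Remark~\ref{rem:X-vs-Z}, and $\Zss(\BT+\lambda)=\Zss(\St+\lambda)$ was noted above. For the converse I would extract invariants of $\ru$ modulo $\sim$ from the set $\Zss(\ru)$:
\begin{itemize}
\item the number of irregular embeddings and the dimension, detected by the size of the family of reducible points;
\item the determinant character $\prod_i\omega_i^{r_{i,1}+r_{i,2}}$;
\item the multiset of pairs $(r_{i,1},r_{i,2})$ modulo the carrying ambiguities of $p$-adic digits.
\end{itemize}
The point is that the set of residues $\sum_{i\in J}p^{-i}r_{i,1}+\sum_{i\notin J}p^{-i}r_{i,2}$ (mod $q-1$), as $J$ varies, determines the digits $u_i$ except at the carrying ambiguities $u_j=0$ with $u_{j+1}=p$, and $u_j=1$ with $u_{j+1}=0$. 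These are exactly the invertible $\nu_j$ and $\mu_j$ moves, together with the $\BT/\St$ ambiguity. I would prove this by normalizing $\ru$ within its $\sim$-class so that it avoids these configurations, and then recovering it digit by digit.

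Parts (ii) and (iii) are where the main work lies. Given a proper inclusion $\Zss(\ru)\subsetneq\Zss(\ru')$, normalize both types and compare the residue sets. I would show that if $\ru'$ is not obtained from something $\sim\ru$ by one valid operation, then an intermediate type exists. The strategy is to find a position $j$ where the digit data of $\ru$ and $\ru'$ differ, and to check case by case, according to $(u_j,u_{j+1})$ of $\ru$, which of $\mu_j$, $\theta_j$, $\nu_j$ moves $\ru$ toward $\ru'$ while keeping $\Zss(f_j(\ru))\subseteq\Zss(\ru')$. The exceptional pair $\ur\subset\crit$ appears exactly when no such move is valid, namely when $\ru$ is everywhere irregular and every candidate $\mu_j$ or $\theta_j$ fails.

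For (iii), simplicity of $\Zss(\ru)\subsetneq\Zss(f_j(\ru))$ is checked by counting: $f_j$ changes the residue set only at the digits $j,j+1$. An intermediate set would have to be of the form $\Zss(g(\ru))$ for some valid $g$ by (ii) applied inductively. Enumerating the possible $g$ acting at $j-1$, $j$ and $j+1$ shows that this happens only in the case $(u_j,u_{j+1})=(0,0)$ with $u_{j-1}=1$ or $u_{j+2}=p$. There the explicit factorization $\theta_j(\ru)\sim\nu_{j-1}\mu_j\mu_{j-1}(\ru)$ and its mirror version exhibit the intermediate set. The $\theta_j(\ru)$-Steinberg non-simplicity disappears at the $\Zss$ level because $\Zss(\BT)=\Zss(\St)$.

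I expect the main obstacle to be the converse in (ii): the combinatorial case analysis showing that every proper inclusion of residue sets factors through a single valid move. The irreducible $\rhobar$ and the carrying behaviour near $u=0$ or $u=p$ create many boundary cases. I would first try to reduce to reducible $\rhobar$, arguing that inclusions of the reducible loci already force the inclusions, and handle the irreducible points only as a consistency check.
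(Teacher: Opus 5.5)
\textbf{The reduction to reducible points fails.} Your converse in (i) assumes that the niveau-$1$ residues $\cL(\sigma,J)=t+\sum_{i\in J}p^{-i}u_i \pmod{q-1}$, together with the determinant, recover the digits $u_i$ up to the invertible $\mu_j,\nu_j$ configurations and the $\BT/\St$ coincidence. Your last paragraph goes further and proposes treating irreducible $\rhobar$ only as a consistency check. Both claims are false for $f=2$.

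Take $\sigma=(p-1,1;t)$ and $\sigma'=(p,p-1;t+p-1)$. Modulo $p^2-1$, where $p^{-1}\equiv p$, the niveau-$1$ values are $\cL(\sigma)=t+\{0,p-1,p,2p-1\}$ and $\cL(\sigma')=t+\{p-1,2p-1,0,p\}$. Also $d(\sigma)=d(\sigma')=2t+2p-1$. So the reducible loci coincide. However, neither weight has an index with $(u_j,u_{j+1})\in\{(1,0),(0,p)\}$, so $\sigma\not\sim\sigma'$, and neither is Barsotti--Tate or Steinberg. In fact $\Zss(\sigma)$ and $\Zss(\sigma')$ are incomparable, and only irreducible points separate them:
\begin{itemize}
\item for $J=\{0,3\}\subset\Z/4\Z$ one has $\cL(\sigma,J)=(q+1)t+2p-1$, which is not among the niveau-$2$ values $(q+1)t+\{2p-p^2,\,p^3+p-1,\,p^3-p^2+p,\,2p^3-1\}$ of $\sigma'$ modulo $p^4-1$;
\item symmetrically, $\cL(\sigma',\{0,1\})=(q+1)t+2p-p^2\notin\cL(\sigma)$.
\end{itemize}
This is exactly the pair the paper eliminates by hand in the $f=2$ subcases of $(N.b)$ and $(-)$ of Theorem~\ref{thm:preliminaries}. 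Niveau-$2$ profiles are also used essentially in Lemmas~\ref{lem:P-profile} and~\ref{lem:Nb-zero-string}. So a digit-by-digit recovery from niveau-$1$ residues cannot prove (i), and (ii) cannot be reduced to reducible $\rhobar$.

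\textbf{The core of (ii) has no mechanism.} You need to turn an arbitrary proper inclusion into a valid non-invertible $f_j$ with $\Zss(f_j(\sigma))\subseteq\Zss(\sigma')$. ``A position where the digits differ'' and ``moving toward $\sigma'$'' give no rule for where to act or which operation to apply, and digits typically differ along whole runs. The paper's missing idea is the carry vector. For a pinned equality $\rhobar(\sigma,\varnothing)=\rhobar(\sigma',K)$ one has $\eps^K_iu'_i-u_i=px_{i-1}-x_i$ with $|x_i|\le 2$, and the parities of the $x_i$ are independent of the profiles. One chooses $\sigma$ in its $\sim$-class to minimize $\sum_i|x_i|$; this rules out $(u_j,u_{j+1})=(1,0)$ with $x_j>0$ and $(0,p)$ with $x_j<0$. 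One then acts at a sign change of the carries: $\mu_j$ or $\theta_j$ where $x_{j-1}\le 0<x_j$, and $\nu_{j'}$ or $\theta_j$ where $x_{j-1}=0>x_j$. The constant-sign cases give only $\ur\subset\crit$ or $\BT/\St$. Finally, $\cL(f_j(\sigma),J)\in\cL(\sigma')$ is checked for every profile, niveau $2$ included, using the sum and difference of the carry equations.

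\textbf{Further missing pieces.} You also need that non-invertible valid operations are strict on $\Zss$ (Proposition~\ref{prop:strict-operations}). Without it, neither the converse in (i) nor the conclusion $\Zss(f_j(\sigma))=\Zss(\sigma')$ in (ii) follows. In (iii), your claim that an intermediate $g$ must act at $j-1,j,j+1$ is unsupported. The paper first uses codimension of the irreducible stacks (via Proposition~\ref{prop:reduction}, once (i) and (ii) hold) to reduce to $(u_j,u_{j+1})=(0,0)$. Only then does it show $g\in\{\theta_j,\nu_j\}$, using that the niveau-$1$ values of $\theta_j(\sigma)$ are $\cL(\sigma,J)+\gamma p^{-j}$ with $\gamma\in\{-1,0,1\}$, together with the explicit strictness witnesses.
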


For $p$-bounded Hodge types~$\ru$, the  results of \cite{gls12} give an explicit description of $\Zss(\ru)$, thus reducing Theorem~\ref{thm:Zss} to an explicit but very complicated combinatorial problem whose proof will occupy the body of the paper. For now, however, we prove the following.

\begin{prop}\label{prop:reduction}
  Assume parts $(i)$ and $(ii)$ of Theorem~\ref{thm:Zss}. Then we have:
\begin{enumerate}
    \item If $\ru$ and $\ru'$ are $p$-bounded Hodge types, then $\cZ(\ru) \subseteq \cZ(\ru')$ if and only if $\Zss(\ru) \subseteq \Zss(\ru')$, except when there exists $\lambda$ such that $\ru \sim \St + \lambda$ and $\ru' \sim \BT + \lambda$.
    
    \item Theorems~\ref{thm:equivalence-Z} and~\ref{thm:simple-inclusions} hold.

    \item Further assuming Theorem~\ref{thm:Zss}(iii), Theorem~\ref{thm:non-simple-inclusions} holds.
\end{enumerate}
\end{prop}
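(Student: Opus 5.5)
The plan is to transport statements about $\rZss$ to statements about $Z$. The trivial direction is that $\cZ(\ru)\subseteq\cZ(\ru')$ implies $\Zss(\ru)\subseteq\Zss(\ru')$, since $\Zss$ is the set of closed points. Two structural inputs feed the converse. The first is that $\ru\sim\ru'$ implies $\cZ(\ru)=\cZ(\ru')$: this holds for each generator of $\sim$, by translation by $\Lambda$ and by the observation that invertible $\mu_j,\nu_j$ give equalities. The second is that every valid $f_j$ gives $\cZ(\ru)\subseteq\cZ(f_j(\ru))$ by Proposition~\ref{prop:operation-inclusion}, and that $\cZ(\ur+\lambda)\subseteq\cZ(\crit+\lambda)$.

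\textbf{The Steinberg/BT case.} I first record two facts.
\begin{enumerate}
\item $\Zss(\BT+\lambda)=\Zss(\St+\lambda)$ is maximal in $\rZss$. Suppose $\ru\sim\BT+\lambda$ or $\ru\sim\St+\lambda$. Then $\cZ(\ru)$ has full dimension, so $\ru$ is regular and no $\mu_j,\theta_j,\nu_j$ is valid for it. By Theorem~\ref{thm:Zss}(i), any realization of this set is such a $\ru$, so Theorem~\ref{thm:Zss}(ii) allows no proper inclusion out of it.
\item The only valid operation whose output is Steinberg is $\theta_j$. For $\mu_j(\ru)$ to be Steinberg one would need $u_j=p+1$. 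For $\nu_j$ the new $u_j$ equals $1\neq p$.
\end{enumerate}
By Remark~\ref{rem:X-vs-Z}, \eqref{eq:St-to-BT}, if $\theta_j(\ru)=\St+\lambda$ then in fact $\cZ(\ru)\subseteq\cZ(\BT+\lambda)$. So an inclusion into a Steinberg-realized set can always be rerouted into the BT realization. Finally, $\cZ(\St+\lambda)\not\subseteq\cZ(\BT+\lambda)$: the latter is irreducible, while the former contains the extra component $\cX(\sigma(\St+\lambda))$.

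\textbf{Proof of (i).} Suppose $\Zss(\ru)\subseteq\Zss(\ru')$ and we are not in the excluded case. Choose a chain of simple inclusions in $\rZss$ from $\Zss(\ru)$ to $\Zss(\ru')$; it is finite by dimension. By Theorem~\ref{thm:Zss}(ii), each step $\Zss_{m-1}\subsetneq\Zss_m$ is realized as $\Zss(s_m)\subsetneq\Zss(g_m(s_m))$, where $g_m$ is a valid operation or the step $\ur\to\crit$.

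Consecutive realizations must be glued. We have $\Zss(g_m(s_m))=\Zss(s_{m+1})$, so by Theorem~\ref{thm:Zss}(i) the types $g_m(s_m)$ and $s_{m+1}$ are $\sim$-equivalent, unless one is $\sim\BT+\lambda$ and the other is $\sim\St+\lambda$. By fact (i) above, such a set is maximal, so this can only happen at the two ends of the chain.
\begin{itemize}
\item At the start, an ambiguity forces $\Zss(\ru)$ to be maximal, hence the chain is empty. Then $\ru'$ is either $\sim\ru$ (fine), or a BT/St partner. The bad partner pairing is exactly the excluded case, and $\cZ(\BT)\subseteq\cZ(\St)$ handles the other.
\item At the end, if the final realization is Steinberg but $\ru'\sim\BT+\lambda$, the last operation is a $\theta_j$ by fact (ii), and \eqref{eq:St-to-BT} reroutes it into $\cZ(\BT+\lambda)$. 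If the final realization is BT and $\ru'\sim\St+\lambda$, I use $\cZ(\BT)\subseteq\cZ(\St)$.
\end{itemize}
Composing the inclusions of $\cZ$'s along the chain then gives $\cZ(\ru)\subseteq\cZ(\ru')$.

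\textbf{Proof of (ii).} For Theorem~\ref{thm:equivalence-Z}, suppose $\cZ(\ru)=\cZ(\ru')$. Then $\Zss$ agree, so by Theorem~\ref{thm:Zss}(i) either $\ru\sim\ru'$ or we are in the BT/St case, which is impossible because $\cZ(\BT)\neq\cZ(\St)$. The converse was noted above.

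For Theorem~\ref{thm:simple-inclusions}, let $\cZ(\ru)\subsetneq\cZ(\ru')$ be simple.
\begin{itemize}
\item If $\Zss(\ru)=\Zss(\ru')$, we get the BT$\subsetneq$St bullet.
\item Otherwise the $\Zss$-inclusion is strict. Lift a simple chain in $\rZss$ from $\Zss(\ru)$ to $\Zss(\ru')$ as in (i). Each lifted step is strict because the $\Zss$ differ, so simplicity forces chain length one.
\end{itemize}
In the length-one case, Theorem~\ref{thm:Zss}(ii) gives an operation or the ur/crit pair, and Theorem~\ref{thm:equivalence-Z} identifies the endpoints with $\cZ(\ru)$ and $\cZ(\ru')$. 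The only exception is a BT/St mismatch at the top, where the realization is $\theta_j(s)=\St+\lambda$ but $\cZ'=\cZ(\BT+\lambda)$. There I use the $\nu$-chain of \eqref{eq:St-to-BT}. Every intermediate $\Zss$ lies between $\Zss(s)$ and $\Zss(\BT+\lambda)$, so simplicity of the $\Zss$-inclusion forces all but one $\nu$-step to be an equality. That remaining step exhibits $\cZ'$ as $\cZ(\nu_i(s''))$ with $s''$ a realization of $\cZ$, as required.

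\textbf{Proof of (iii).} For Theorem~\ref{thm:non-simple-inclusions}, by part (i) an inclusion of $\cZ$'s is simple iff the corresponding $\Zss$-inclusion is simple, except around the BT/St pair.
\begin{itemize}
\item Away from that pair, Theorem~\ref{thm:Zss}(iii) transfers directly.
\item The case $\theta_j(\ru)$ Steinberg is non-simple via $\cZ(\ru)\subsetneq\cZ(\BT+\lambda)\subsetneq\cZ(\St+\lambda)$.
\item $\cZ(\BT)\subsetneq\cZ(\St)$ is simple, since nothing in $Z$ lies strictly between with the same $\Zss$ except these two.
\end{itemize}

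I expect the main obstacle to be the bookkeeping of the BT/St ambiguity in (ii), namely the rerouting of a $\theta_j$-into-Steinberg step into a single $\nu$-step when $f>2$.
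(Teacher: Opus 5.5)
Your proposal is correct and matches the paper's proof. The paper also builds a chain of simple inclusions in $\rZss$ and glues it via Theorem~\ref{thm:Zss}(i); maximality of the BT/St set confines any BT/St ambiguity to the top of the chain, and a final $\theta_j$-into-Steinberg step is rerouted through the $\nu$-chain of \eqref{eq:St-to-BT}. For (ii) and (iii) the paper packages this as two poset isomorphisms $\rZss\cong Z\smallsetminus\{\cZ(\St+\lambda)\}$ and $\rZss\cong Z\smallsetminus\{\cZ(\BT+\lambda)\}$ rather than re-lifting chains, which is only a cosmetic difference. One small omission: in your length-one case you should also rule out the opposite mismatch (realization $\sim\BT+\lambda$, target $\cZ(\St+\lambda)$). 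That is immediate, since $\cZ(\BT+\lambda)$ would then lie strictly between $\cZ$ and $\cZ'$, contradicting simplicity.
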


\begin{remark}
   We emphasize the following interpretation of   Proposition~\ref{prop:reduction}$(i)$:\ for $p$-bounded Hodge types $\ru, \ru'$, inclusions between the stacks $\cZ(\ru)$ and $\cZ(\ru')$ may be detected at the level of an inclusion between their closed points, with the exception that $\Zss(\St + \lambda) = \Zss(\BT+\lambda)$ while $\cZ(\St+\lambda)\not\subseteq \cZ(\BT + \lambda)$.
\end{remark}

 \begin{proof} 
 The only-if direction of $(i)$ is immediate. For the reverse, assume that $\Zss(\ru) \subseteq \Zss(\ru')$,
 and that there does not exist $\lambda$ such that $\ru \sim \St + \lambda$ and $\ru' \sim \BT + \lambda$.

 If $\Zss(\ru) = \Zss(\ru')$, then Theorem~\ref{thm:Zss}$(i)$ implies that either $\ru \sim \ru'$ or else $\ru \sim \BT + \lambda$ and $\ru' \sim \St + \lambda$ for some $\lambda$. In either case $\cZ(\ru) \subseteq \cZ(\ru')$.
 
 Otherwise, since the set $\rZss$ is finite it is possible to write 
\[ \Zss(\ru) = \cZ_{\sss,0} \subsetneq \cZ_{\sss,1} \subsetneq \cdots \subsetneq \cZ_{\sss,n} = \Zss(\ru') \] for some $n > 0$,  with each $\cZ_{\sss,m} \subsetneq \cZ_{\sss,m+1}$ a simple inclusion in $\rZss$. By Theorem~\ref{thm:Zss}$(ii)$, we can write this inclusion as $\Zss(\ru_m) \subsetneq \Zss(\ru'_m)$ for  Hodge types $\ru_m, \ru'_m$ satisfying one of the two listed possibilities. 
In each of those cases we know that $\cZ(\ru_m) \subseteq \cZ(\ru'_m)$. 

Regular Hodge types give maximal elements of $\rZss$ (e.g.\ by Theorem~\ref{thm:Zss}$(ii)$, because~$\ru$ is never regular in the first  bullet point, and similarly $\ur + \lambda$ is not regular in the second bullet point). Thus none of $\ru$, $\ru_m$, or $\ru'_m$ is either Barsotti--Tate or Steinberg except possibly $\ru'_{n-1}$.  By Theorem~\ref{thm:Zss}$(i)$, $\Zss(\ru'_m) = \Zss(\ru_{m+1})$ implies that $\ru'_m \sim \ru_{m+1}$, and therefore $\cZ(\ru'_m) = \cZ(\ru_{m+1})$. Similarly $\cZ(\ru) = \cZ(\ru_0)$. If $\ru'_{n-1}$ (hence also $\ru'$) is either Barsotti--Tate or Steinberg, then by changing $\ru_{n-1}$ it is possible to arrange that $\ru'_{n-1} \sim \ru'$:\ this is because if $\theta_j(\underline{s}) = \St + \lambda$, then  there exists $\underline{s}' \sim \underline{s}$ such that $\nu_{j+f-1}(\underline{s}') \sim \BT + \lambda$ (namely $\underline{s}' \sim (\nu_{j+f-2} \circ \cdots \circ \nu_{j+2} \circ \nu_{j+1})(\underline{s})$, as in \eqref{eq:St-to-BT}). We obtain 
\[ \cZ(\ru) = \cZ(\ru_0) \subseteq \cZ(\ru'_0) = \cZ(\ru_1) \subseteq \cdots \subseteq \cZ(\ru'_{n-1}) = \cZ(\ru') \]
as desired.

Theorem~\ref{thm:Zss}$(i)$ implies that there exists a well-defined map from $\rZss$ to $Z \smallsetminus \{ \cZ(\St + \lambda) : \lambda \in \Z^{\Z/f\Z}\}$ sending $\Zss(\ru)$ to $\cZ(\ru)$, except that $\Zss(\St + \lambda) = \Zss(\BT + \lambda)$ is sent to $\cZ(\BT + \lambda)$. 

By $(i)$ of this Proposition, this is an isomorphism of partially ordered sets under inclusion. By the same argument with $\St$ and $\BT$ swapped, there is also a well-defined isomorphism from $\rZss$ to $Z \smallsetminus \{ \cZ(\BT + \lambda) : \lambda \in \Z^{\Z/f\Z}\}$. This establishes Theorems~\ref{thm:equivalence-Z},~\ref{thm:simple-inclusions}, and~\ref{thm:non-simple-inclusions} for all statements not involving both Barsotti--Tate and Steinberg weights (under the additional assumption of Theorem~\ref{thm:Zss}$(iii)$, in the case of Theorem~\ref{thm:non-simple-inclusions}).

The claims involving both Barsotti--Tate and Steinberg weights are easily checked separately. We have $\cZ(\BT + \lambda) \neq \cZ(\St + \lambda')$ and $\BT + \lambda \not\sim \St + \lambda'$, giving Theorem~\ref{thm:equivalence-Z}. The inclusion $\cZ(\BT + \lambda) \subseteq \cZ(\St + \lambda)$ is indeed simple, necessitating the third bullet point in Theorem~\ref{thm:simple-inclusions}, and there are no  inclusions between $\cZ(\BT + \lambda)$ and $\cZ(\St + \lambda')$ if $\lambda - \lambda' \not\in \Lambda$.
For Theorem~\ref{thm:non-simple-inclusions}, a statement involving both Barsotti--Tate and Steinberg weights could only have $\St + \lambda = f_j(\ru)$, with the Barsotti--Tate weight appearing in the middle of a non-simple inclusion;\ and indeed we have already seen that if $\theta_j(\ru) = \St + \lambda$, then $\cZ(\ru) \subsetneq \cZ(\BT+ \lambda) \subsetneq \cZ(\St + \lambda)$, giving the second bullet point of Theorem~\ref{thm:non-simple-inclusions} and completing the proof.
\end{proof}

The rest of the paper is devoted to the proof of Theorem~\ref{thm:Zss}.

\subsection{Statement of AI use} Given that for a valid operation $f_j$, the inclusion $\cZ(\ru) \subseteq \cZ(f_j(\ru))$ is typically a codimension one inclusion between irreducible stacks, it was natural to wonder whether something like Theorem~\ref{thm:simple-inclusions} might hold. In the course of our forthcoming work in \cite{KLSlgc}, we had previously used calculations with sets of semisimple points to make guesses about the extent to which formulas like $\cZ(\mu_j(\ru)) \cap \cZ(\theta_j(\ru)) = \cZ(\ru)$ might hold. This prompted us to consider the analogue of Theorem~\ref{thm:simple-inclusions} for semisimple point sets and to realize that the one could be reduced to the other. It appeared to us that the combinatorics required to prove Theorem~\ref{thm:Zss} might be rather complicated, but also potentially amenable to current large language models.

Indeed the first proof of Theorem~\ref{thm:Zss}  was found by GPT-5.5 Pro, by a naive process of giving the problem to the LLM and then prompting it to continue working whenever it stopped. After roughly 30 continuations, the LLM claimed to have a proof. The argument was then turned into a manuscript by an iterative process using two LLM conversations, one the ``author'' and the other an adversarial ``reviewer''. The resulting manuscript was, to us, essentially alien --- full of jargon and barely recognizable as related to the original problem. Attempts to formalize the manuscript in Lean using Codex eventually stalled.

We then abandoned the initial manuscript and restarted the process from the beginning, this time also prompting the LLM that it was permitted to introduce new notation, but not new terminology. The new manuscript, which ultimately grew to 45 pages, was clearly recognizable as addressing the problem under consideration, and seemed plausibly to be a proof, although the writing  was still very difficult for us to follow. We then formalized the manuscript in Lean via Codex using GPT-5.5 with xhigh reasoning effort. This surfaced several errors in the manuscript, which were corrected with input from GPT-5.5 Pro;\ but the formalization was eventually completed, convincing us that indeed we had a proof.

It then fell to us to understand the argument. It was immediately clear that the manuscript was organized in what seemed to us to be an unnatural and unnecessarily complicated way. In the process of reorganizing the argument, and trying to understand the reorganized argument, we found a number of simplifications to the statements, the proofs, and the exposition.  The exposition below is entirely written by us. Many of the lemma and theorem statements are different from those that can be found in the LLM-generated manuscript. Nevertheless, the underlying logic is still in essence the one that was found by the LLM.

The artifacts that we have described here, including  several versions of LLM-generated manuscripts, and the Lean formalization of one of those early versions, are available at \url{https://github.com/davidsavitt/KLSposet-artifacts}. A formalization of the combinatorial parts of the present manuscript (i.e.\ the results about the sets $\Zss(\ru)$, but not those concerning the Emerton--Gee stacks) is available at \url{https://github.com/davidsavitt/KLSposet}.

\subsection{Notation and further preliminaries}\label{ss:notation}

\subsubsection{Phantom Serre weights}
To each Hodge type $\ru$ we may associate the tuple $(u_0,\ldots,u_{f-1} ; t)$ with $u_i = r_{i,1} - r_{i,2}$ and $t = \sum_{i \in \Z/f\Z} p^{-i} r_{i,2} \in \Z/(q-1)\Z$. This tuple depends only on the class of $\ru$ under translation by $\Lambda$, and it is not difficult to see that this defines a bijection from equivalence classes of $p$-bounded Hodge types up to translation by $\Lambda$, to tuples $(u_0,\ldots,u_{f-1} ; t)$ with $t \in \Z/(q-1)\Z$ and each $u_i \in [0,p]$. We will refer to the tuples $(u_0,\ldots,u_{f-1};t)$ as \emph{phantom Serre weights}.  For brevity we often write $\sigma = (u_i ; t)$ instead of $(u_0,\ldots,u_{f-1};t)$, and we write $(\ur;t)$, $(\BT;t)$, $(\crit;t)$, and $(\St;t)$ for $(u,\ldots,u;t)$ with $u = 0,1,p-1,p$ respectively.

The operations $\mu_j$, $\theta_j$, and $\nu_j$ descend to phantom Serre weights. Concretely, we have
\[
\begin{array}{rcll}
\mu_j:     & u_{j}\mapsto u_{j}-1, & u_{j+1}\mapsto u_{j+1} + p,
       & t\mapsto t,\smallskip\\
\theta_j: & u_{j}\mapsto u_{j}+1, & u_{j+1}\mapsto u_{j+1} + p,
       & t\mapsto t-p^{-j},\smallskip\\
\nu_j:    & u_j\mapsto u_j + 1, & u_{j+1}\mapsto p-u_{j+1},
       & t\mapsto t-p^{-j}+u_{j+1}\,p^{-(j+1)}.
\end{array}
\]
All other $u_i$'s are fixed. The operations are valid when
\begin{itemize}
    \item $u_{j} > 0$ and $u_{j+1} = 0$ for $\mu_j$;
    \item $u_{j} < p$ and $u_{j+1} = 0$ for $\theta_j$; 
    \item $u_j = 0$ for $\nu_j$. 
\end{itemize}
Now if $(u_{j},u_{j+1})=(1,0)$ then $\nu_{j}(\mu_j(\sigma)) = \sigma$, while if $(u_j,u_{j+1}) = (0,p)$ then $\mu_{j}(\nu_j(\sigma)) = \sigma$, so on phantom Serre weights $\mu_j$ and $\nu_j$ are literally inverse to one another.
The equivalence relation $\sim$ also descends to phantom Serre weights, and we denote it by $\sim$ again.

\subsubsection{Fundamental characters}
Take $\pi = (-p)^{1/(q-1)}$ and for $g \in G_K$ we set $h(g) = g(\pi)/\pi \in \mu_{q-1}(K)$. Identifying $\mu_{q-1}(K)$ with $k^{\times}$, we define fundamental characters $\omega_i$ of level $f$ by setting $\omega_i = \kappa_i \circ h : I_K \to \Fpbar^{\times}$  for
     each $i \in \Z/f\Z$ (\emph{cf.}~\cite[Lem.~1.4.1]{cegsC} and the discussion following).  Fix $\omega'_0$ to be either of the two fundamental characters of level $2f$ such that $(\omega'_0)^{q+1} = \omega_0$, and recursively define $\omega'_i$ for $i \in \Z/2f\Z$ by $(\omega'_{i+1})^p = \omega'_i$.

\subsubsection{The sets \texorpdfstring{$\Zss(\sigma)$}{Z\_ss(sigma)}}

We now describe the sets $\Zss(\ru)$ explicitly. 

\begin{defn}\label{def:profile}
We say that a set $J$ is a \emph{profile} if either 
\begin{itemize}
    \item $J \subset \Z/f\Z$ is any subset;\ or
    \item $J \subset \Z/2f\Z$, and for all $i$ we have $i \in J$ if and only if $i+f \not\in J$.
\end{itemize}
In the former case we say that $J$ has niveau $1$, and in the latter case we say that $J$ has niveau $2$.
\end{defn}

If $\sigma = (u_i ; t)$, then for each profile $J \subset \Z/f\Z$ we define
\[ \rhobar(\sigma, J) = \omega_0^t \otimes \left( \prod_{i \in J} \omega_i^{u_i} \oplus \prod_{i \in J^c} \omega_i^{u_i} \right), \]
while for each profile $J \subset \Z/2f\Z$ we define
\[ \rhobar(\sigma, J) = \omega_0^t \otimes \left( \prod_{i \in J} (\omega'_i)^{u_i} \oplus \prod_{i \in J^c} (\omega'_i)^{u_i} \right), \]
with the notation  $u_i$ extended to $i \in \Z/2f\Z$ periodically modulo $f$. 

\begin{thm}[\cite{gls12,cegsA}] If $\sigma$ is the phantom Serre weight associated to $\ru$, then we have 
    \[ \Zss(\ru) = \{ \text{semisimple}\ \rhobar\  \text{such that}\ \rhobar|_{I_K} \cong \rhobar(\sigma,J)^{\vee} \,  \text{for some profile}\ J \}. \]
\end{thm}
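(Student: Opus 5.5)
This statement is cited from \cite{gls12,cegsA} and is not reproved in the paper. If I had to establish it, the plan would be to reduce to the description of mod $p$ reductions of crystalline representations with small Hodge--Tate weights.

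\textbf{Reduction to $\sigma$.} Both sides depend only on the phantom Serre weight $\sigma = (u_i;t)$.
\begin{itemize}
\item On the left, translating $\ru$ by $\Lambda$ amounts to twisting by a crystalline character with trivial reduction, so $\Zss(\ru)$ depends only on $\sigma$.
\item On the right, translating $\ru$ by an arbitrary $\lambda$ twists both sides by a crystalline character whose inertial reduction is $\omega_0^{-\sum_i p^{-i}\lambda_i}$ (up to the dual convention). So I may assume $r_{i,2}=0$ for all $i$ and $t=0$.
\end{itemize}

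\textbf{Regular case.} Suppose every $u_i \in [1,p]$. The main result of \cite{gls12} (the weight part of Serre's conjecture for unramified $K$, with the $u_i=p$ case supplied by \cite{cegsA} and the Steinberg analysis) says the following. A semisimple $\rhobar$ has a crystalline lift of Hodge type $(u_i,0)$ if and only if $\rhobar|_{I_K}$ is, up to the dual normalization, of the form
\[ \prod_{i\in J}\omega_i^{u_i}\oplus\prod_{i\notin J}\omega_i^{u_i} \]
for some $J\subset\Z/f\Z$, or of the analogous niveau-$2$ form.

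The ``if'' direction is the explicit construction: take sums of crystalline characters, or inductions of crystalline characters of the unramified quadratic extension, whose labeled Hodge--Tate weights are distributed according to $J$.

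The ``only if'' direction comes from Kisin modules. One computes the possible shapes of rank-$2$ Kisin modules of height $\le p$ (Breuil--Kisin modules with $E(u)^{u_i}$-type divisibility), and shows that the semisimplification is always of the stated form.

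\textbf{Irregular case.} When some $u_i=0$, I would use \cite{bellovin2024irregular}, or argue directly. A crystalline representation with equal labeled weights at $i$ has a Kisin module that is ``of height $0$'' at $i$. The same Kisin-module analysis still applies with $u_i=0$, and the existence direction is again given by explicit induced or reducible lifts.

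\textbf{Matching conventions.} Finally I would check that the dual and the twist $\omega_0^t$ agree with the normalization $\pi=(-p)^{1/(q-1)}$.

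\textbf{Main obstacle.} I expect the hardest step to be the ``only if'' direction at the boundary cases $u_i\in\{0,p\}$. These lie outside the range $[1,p-1]$ handled by Fontaine--Laffaille theory. There one must control the shape of the Kisin module via its $\varphi$-matrix in each embedding and rule out exotic reductions.
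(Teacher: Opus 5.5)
You are right that the paper gives no proof here: the statement is quoted from \cite{gls12,cegsA}, so insofar as you defer to those references you agree with the paper. The sketch you offer in their place, however, has a gap in the ``only if'' direction. For reducible $\rhobar$, the mod $p$ Kisin module $\overline{\mathfrak M}$ is an extension of rank-one modules with exponents $s_i,t_i\ge 0$ and $s_i+t_i=u_i$. The Hodge type only forces the $i$-th $\varphi$-matrix to have elementary divisors $1$ and $u^{u_i}$, where $u$ is the Kisin variable. Shapes $\bigl(\begin{smallmatrix} a u^{s_i} & x_i\\ 0 & b u^{t_i}\end{smallmatrix}\bigr)$ with $x_i$ a unit and $0<s_i<u_i$ pass this test. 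They would give $\rhobar|_{I_K}$ equal, up to twist and dual, to $\prod_i\omega_i^{s_i}\oplus\prod_i\omega_i^{t_i}$ with some $s_i\notin\{0,u_i\}$, which is in general not of the form $\rhobar(\sigma,J)^\vee$. The $\varphi$-matrix alone cannot exclude these shapes. One has to use that the lattice carries a crystalline $G_K$-action, and this is exactly the role of Liu's $(\varphi,\hat G)$-modules in \cite{gls12}. That is also where Hodge types with $u_i=p$ at some embeddings are handled. The component analysis of $\cZ(\St+\lambda)$ in \cite{EGmoduli,cegsA} only addresses the type with every $u_i=p$. Relatedly, $u_i=0$ is harmless, since the $\varphi$-matrix at $i$ is then invertible; the difficulty sits at the top of the range.

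The ``if'' direction also misses the case where $\rhobar|_{I_K}$ is scalar. Suppose your witness is a niveau-$2$ profile $J$ with $\rhobar(\sigma,J)$ scalar, and let $L/K$ be the unramified quadratic extension. The crystalline character $\psi$ of $G_L$ that you induce then has $\bar\psi=\chi|_{G_L}$ for some character $\chi$ of $G_K$. Hence $\mathrm{Ind}_{G_L}^{G_K}\bar\psi\cong\chi\oplus\chi\eta$, where $\eta$ is the unramified quadratic character. Inductions therefore only reach split $\rhobar$ whose two characters differ by $\eta$. They do not reach an arbitrary $\chi\xi_1\oplus\chi\xi_2$ with $\xi_1,\xi_2$ unramified, although every such $\rhobar$ lies in the right-hand side of the statement. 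The missing input is the fact recorded in Remark~\ref{rem:same-f} (via \cite{GHS}): whenever a niveau-$2$ profile gives a scalar $\rhobar(\sigma,J)$, there is also a niveau-$1$ witness $J'$. A direct sum of two crystalline characters with labeled weights distributed according to $J'$ then supplies the lift. With these two inputs added, your reduction to $r_{i,2}=0$ and the rest of your outline become the standard argument.
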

The dual comes from our normalization for Hodge--Tate weights:\ following \cite{cegsA,bellovin2024irregular} we chose the normalization so that the cyclotomic character has all Hodge--Tate weights equal to $-1$. 

\begin{defn}
 If $\sigma$ is a phantom Serre weight, we define 
\[ \Zss(\sigma) = \{ \rhobar(\sigma, J) : J\ \text{any profile}\}.\]   
\end{defn}
  Theorem~\ref{thm:Zss} thus descends to an equivalent statement about phantom Serre weights and the finite sets $\Zss(\sigma)$. This is the language in which we will phrase the proof of Theorem~\ref{thm:Zss}.

\begin{remark}\label{rem:same-f}
  Suppose $\rhobar$ is semisimple and non-scalar on inertia. If $J$ is a profile and $\rhobar|_{I_K} \cong \rhobar(\sigma,J)^{\vee}$ then  either $\rhobar$ is split and $J$ has niveau $1$, or else $\rhobar$ is irreducible and $J$ has niveau $2$.

  If on the other hand $\rhobar$ is scalar on inertia, then it can happen that $\rhobar|_{I_K} \cong \rhobar(\sigma,J)^\vee$ with $J$ of niveau 2. But for each such $\sigma$, there is also a witness $\rhobar|_{I_K} \cong \rhobar(\sigma,J')^\vee$ with $J'$ of niveau $1$;\ see the discussion in the last paragraph of \cite[Example~7.1.7]{GHS}.
\end{remark}

If $J$ is a profile, let $\delta_J$ be the indicator function of $J$, and set $\eps^J_i = (-1)^{\delta_J(i)}$.

\section{Pinnings}

\subsection{The carry equation}
The following key lemma is also proved in \cite{KLSlgc}, but we include a proof here for the sake of completeness. 

\begin{lemma}\label{lem:carry-equation}
  Let $\sigma = (u_i;t)$ and $\sigma' = (u'_i;t')$ be two phantom Serre weights. Suppose that $J,J'$ are profiles of the same niveau such that $  \rhobar(\sigma,J) = \rhobar(\sigma',J')$, with  equality rather than isomorphism meaning that the two characters of $ \rhobar(\sigma,J)$ and of $\rhobar(\sigma',J')$ are in the same order.  Set $f' = f$ if $J$ has niveau $1$, and $f' = 2f$ otherwise.

\begin{enumerate}
    \item  There are unique  integers $(x_i)$ such that
  \[ \eps^{J'}_i u'_i - \eps^{J}_i u_i = p x_{i-1} - x_i \]
  for all $i \in \Z/f'\Z$. Moreover $x_{i+f} = -x_i$ if $f' = 2f$. 
  
   \item The parities of the $x_i$'s depend only on $\sigma, \sigma'$ and in particular are independent of the profiles $J,J'$.

   \item We have $x_i \in [-2,2]$ for all $i$, unless $p = 3$, $u_i = u'_i = 3$ for all $i$, $t'-t = (q-1)/2$, and $\{ J, J' \} = \{ \varnothing,\Z/f\Z\}$.
\end{enumerate}
\end{lemma}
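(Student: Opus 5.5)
The plan is to turn the equality of characters into a congruence on exponents and then solve that congruence explicitly. Treat niveau $2$ (so $f'=2f$, working with $\omega'_i$ and $q^2-1$); niveau $1$ is the same argument with $f$ and $q-1$. Write $\omega'_i=(\omega'_0)^{p^{-i}}$. Comparing the first characters of $\rhobar(\sigma,J)$ and $\rhobar(\sigma',J')$, and separately the second characters, gives two congruences modulo $q^2-1$:
\[
(q+1)t+\sum_{i\in J}u_ip^{-i}\equiv (q+1)t'+\sum_{i\in J'}u'_ip^{-i},
\qquad
(q+1)t+\sum_{i\in J^c}u_ip^{-i}\equiv (q+1)t'+\sum_{i\in J'^c}u'_ip^{-i}.
\]
Subtracting one from the other removes $t$ and $t'$, and what remains is
\[
\sum_i \bigl(\eps^{J'}_iu'_i-\eps^J_iu_i\bigr)p^{-i}\equiv 0 \pmod{q^2-1}.
\]
Multiply through by $p^{f'-1}$ and set $d_i=\eps^{J'}_iu'_i-\eps^J_iu_i$. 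Then $D=\sum_i d_ip^{f'-1-i}$ is a multiple of $p^{f'}-1$, say $D=m(p^{f'}-1)$.

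For existence in (i), I solve $d_i=px_{i-1}-x_i$ directly. Unwinding the recursion around the cycle gives
\[
x_i(p^{f'}-1)=\sum_{k=1}^{f'}p^{k-1}d_{i+k},
\]
and the right-hand side is a cyclic rotation of $D$. Since $p^{f'}\equiv 1 \pmod{p^{f'}-1}$, every rotation of $D$ is also divisible by $p^{f'}-1$, so each $x_i$ is an integer. Uniqueness holds because the linear map $(x_i)\mapsto(px_{i-1}-x_i)$ has determinant $\pm(p^{f'}-1)\neq 0$. For antisymmetry, note that niveau $2$ profiles satisfy $\eps_{i+f}=-\eps_i$, so $d_{i+f}=-d_i$. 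Then $(-x_{i+f})$ solves the same system, and uniqueness gives $x_{i+f}=-x_i$.

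For (ii), work modulo $2$. Since $\eps=\pm1$, the reduction $d_i\equiv u'_i+u_i \pmod 2$ is independent of $J$ and $J'$. Because $p$ is odd, the system mod $2$ reads $x_{i-1}+x_i\equiv d_i$. The parities are then pinned down by the integer formula: reduce $x_i=\sum_k p^{k-1}d_{i+k}/(p^{f'}-1)$ modulo $2$. One route is to note that changing $J$ at one index changes $d_i$ by $\pm2u_i$, so it changes the solution $(x)$ by $2u_i$ times the solution for the standard vector $e_i$. Those solutions $y$ are integral combinations $p^{k}/(p^{f'}-1)\cdot(\dots)$, but they are not obviously integers. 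The cleaner route is to change $J$ and $J'$ simultaneously: the difference of the two solution vectors is $2y$, where $y$ is the rational solution for the difference data. To handle this I will use $(p^{f'}-1)x_i$ with $p^{f'}-1$ even, via the $2$-adic valuation. The plan is to show directly that $\sum_k p^{k-1}\cdot 2u_{i+k}\delta/(p^{f'}-1)$ is an even integer whenever the total is an integer. This is the step I expect to be fiddly: I will compare the profiles in the form $J$ versus $J\cup\{j\}$ while keeping integrality, using that both configurations are realized. In niveau $2$ I would try to exploit the pairing $i\leftrightarrow i+f$. Before committing to this, I would double-check for a simpler proof of (ii) via writing $x_i$ as a difference of carries.

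For (iii), bound the sum. Each $|d_i|\le 2p$, which gives
\[
|x_i|\le\frac{2p\,(p^{f'}-1)/(p-1)}{p^{f'}-1}=\frac{2p}{p-1}\le 3,
\]
with $|x_i|=3$ possible only if $p=3$ and $|d_i|=2p$ for every $i$. This forces $u_i=u'_i=p=3$ and $\eps^{J'}_i=-\eps^J_i$ with a constant sign of $d_i$. Constant sign rules out niveau $2$, so $J'=J^c$ and $J\in\{\varnothing,\Z/f\Z\}$. Substituting back into the original congruence should then force $t'-t=(q-1)/2$.
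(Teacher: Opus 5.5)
Your arguments for (i) and (iii) are sound and essentially the paper's. The paper gets integrality from an explicit basis of the lattice $\Lambda'$ and the bound from a maximum argument; your inversion formula handles both at once. One slip needs fixing. Unwinding $x_{i+1}=px_i-d_{i+1}$ around the cycle gives $(p^{f'}-1)x_i=\sum_{k=1}^{f'}p^{f'-k}d_{i+k}$, not $\sum_{k}p^{k-1}d_{i+k}$. Only the former is a cyclic rotation of $D$; yours is a rotation of the digit-reversal, which is not divisible by $p^{f'}-1$ in general. The bound in (iii) is unaffected, since the weights are the same.

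The genuine gap is (ii): you give no argument, and the route you sketch cannot work. By subtracting the two character congruences at the outset you discarded $T$ (that is, $t'-t$, or $(q+1)(t'-t)$ in niveau $2$). The differenced congruences alone do not determine the parities. Modulo $2$ the system $x_{i-1}+x_i\equiv d_i$ has the all-ones vector in its kernel, and this ambiguity actually occurs. Take $u_i=u'_i=(p-1)/2$ for all $i$. Then both pairs $(J,J')=(\Z/f\Z,\Z/f\Z)$ and $(\varnothing,\Z/f\Z)$ satisfy your differenced congruence, with integral solutions $x=(0,\ldots,0)$ and $x=(-1,\ldots,-1)$. The change in the data is $(p-1)(1,\ldots,1)$, which is even. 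So no argument using only integrality of the two solutions and evenness of the data change, $2$-adic or otherwise, can prove (ii). What excludes this example is the undifferenced first-character congruence, which forces $t'=t$ for the first pair but $t'-t=(q-1)/2$ for the second.

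That is the missing idea. Let $(K,K')$ be a second pair of the same niveau with $\rhobar(\sigma,K)=\rhobar(\sigma',K')$, data $\tilde d_i$ and carries $\tilde x_i$. Subtract the first-character congruence for $(K,K')$ from that for $(J,J')$. The term $T$ cancels because it depends only on $\sigma,\sigma'$. What remains says that the integer vector $z_i=\tfrac12(d_i-\tilde d_i)=(\delta_{K'}(i)-\delta_{J'}(i))u'_i-(\delta_K(i)-\delta_J(i))u_i$ satisfies $\sum_i z_ip^{-i}\equiv 0$. Your existence argument from (i), applied to $z$, gives $z_i=pw_{i-1}-w_i$ with $w_i\in\Z$, and uniqueness gives $x_i-\tilde x_i=2w_i$.

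You also need to compare pinnings of different niveaux, which (ii) asserts and you do not address. Multiply the niveau $1$ first-character congruence by $q+1$, and extend $u_i$, $u'_i$, the niveau $1$ profiles and their carries $f$-periodically to $\Z/2f\Z$. The same subtraction then works modulo $p^{2f}-1$.
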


The integers $x_i$ in the lemma can be thought of as carries in the base $p$ subtraction $\sum_i \eps_i^{J'} u'_i p^{f'-i} - \sum_i \eps_i^J u_i p^{f'-i}$;\ hence the name of the subsection.

\begin{proof}
  The equality $\rhobar(\sigma,J) = \rhobar(\sigma',J')$ implies that there are congruences
  \begin{equation}\label{eq:rhobar-cong}
  \sum_{i \in J} p^{f'-i} u_i  \equiv T + \sum_{i \in J'} p^{f'-i} u'_i  \pmod{p^{f'}-1}
  \end{equation}
  and 
    \begin{equation}\label{eq:rhobar-cong-2}
  \sum_{i \not\in J} p^{f'-i} u_i  \equiv   T + \sum_{i\not\in J'} p^{f'-i} u'_i  \pmod{p^{f'}-1}
  \end{equation}
where $T = t' - t$ if $f' = f$ and $T = (q+1)(t'-t)$ if $f' = 2f$.

Taking the difference between equations \eqref{eq:rhobar-cong} and \eqref{eq:rhobar-cong-2} and rearranging gives $ \sum_{i=0}^{f'-1} p^{f'-i} y_i \equiv 0 \pmod{p^{f'}-1}$ where  \[ y_i = (-1)^{\delta_{J'}(i)} u'_i - (-1)^{\delta_J(i)} u_i.\]

  The lattice $\Lambda'$ of vectors $(y_i) \in \Z^{\Z/f'\Z}$ such that $ \sum_{i=0}^{f'-1} p^{f'-i} y_i \equiv 0 \pmod{p^{f'}-1}$ has a basis consisting of the vectors $v_j = (v_{j,i})$ with $v_{j,j} = -1$, $v_{j,j+1} = p$, and $v_{j,i} = 0$ otherwise.  It follows immediately that there exist unique integers $x_i$ with $y_i = p x_{i-1} - x_i$. 

   Suppose $f' = 2f$. The sequences $u_i,u'_i$ are periodic modulo $f$, while $i \in J$ if and only if $i +f \not\in J$, and similarly for $J'$. It follows that the tuple $(x'_i)$ with $x'_i := -x_{i+f}$ also satisfies the conditions of the lemma. By uniqueness it follows that $x_i = -x_{i+f}$. This proves $(i)$.

  Let $M = \max_j |x_j|$ and suppose that $|x_{i-1}| = M$. Then 
  \[ pM = |p x_{i-1}| = |y_i + x_i| \le 2p + M.  \]
  If $p > 3$ then $M \le 2p/(p-1) < 3$ by the hypothesis on $p$, and $x_i \in [-2,2]$ for all $i$. 
  
  If $p=3$, then the same conclusion still holds unless $|x_{i-1}| = |x_i| = 3$ and $|y_i| = 6$; then iteratively we obtain $|y_i| = 6$ for all $i$, and $u_i = u'_i = 3$ for all $i$. Finally either $x_i = 3$ for all $i$, in which case $\eps_i^{J'} - \eps_i^J = 2$ for all $i$ and $J = \Z/f\Z$, $J' = \varnothing$;\ or else $x_i = -3$ for all $i$, $\eps_i^{J'} - \eps_i^J = -2$ for all $i$, and $J = \varnothing$, $J' = \Z/f\Z$.
  If \(J=\Z/f\Z\) and \(J'=\varnothing\), then  \eqref{eq:rhobar-cong} gives $T \equiv \sum_{i=0}^{f-1}3^{f-i} \cdot 3 \equiv (q-1)/2 \pmod{q-1}$.
The case \(J=\varnothing\), \(J'=\Z/f\Z\) gives the negative of
the same congruence, hence again \(T=(q-1)/2\).
This gives $(iii)$.
    
  Finally suppose that
  $(K,K')$ is a pair of profiles of the same niveau with $\rhobar(\sigma,K) = \rhobar(\sigma',K')$, leading to parameters $\tilde f'$, $\tilde y_i$ and $\tilde x_i$ in place of $f'$, $y_i$ and $x_i$. Suppose first that $\tilde f' = f'$.   Since $(-1)^{\delta_J(i)} = 1 - 2\delta_J(i)$ the difference $z_i = \tfrac{1}{2}(y_i - \tilde y_i)$ is equal to 
  \[ (\delta_{K'}(i) - \delta_{J'}(i)) u'_i - (\delta_{K}(i) - \delta_J(i)) u_i.\] 
  Taking the difference between \eqref{eq:rhobar-cong} for $(J,J')$ and for $(K,K')$ shows that $(z_i) \in \Lambda'$. Writing $z_i = pw_{i-1} - w_i$ for integers $w_i$ we find that $x_i - \tilde x_i = 2w_i$ and therefore $x_i \equiv \tilde x_i \pmod{2}$.

  If instead $\tilde f' \neq f'$, suppose without loss of generality that $f' =f$ and $\tilde f' = 2f$. Then exactly the same argument in the previous paragraph goes through after multiplying \eqref{eq:rhobar-cong} for $(J,J')$ by $q+1$ (and $f$-periodically extending $u_i$, $u'_i$, $x_i$, $J$, $J'$ to $\Z/2f\Z$) to obtain a congruence modulo $p^{2f}-1$.
\end{proof}

\begin{remark} \label{rem:exceptional-case}
  In the exceptional case of Lemma~\ref{lem:carry-equation}$(iii)$, one can furthermore check that there are no $J,J'$ such that $\rhobar(\sigma,J) = \rhobar(\sigma',J')$ other than $\{J,J'\} = \{\varnothing,\Z/f\Z\}$.   Indeed, in niveau \(1\),  
the equality of $\sum_{i \in J} p^{-i} u_i$ and $\sum_{i \in J'} p^{-i} u'_i$ modulo $q-1$ says that two subsets of \(1,3,\ldots,3^{f-1}\) have sums that differ by
\((q-1)/2\) modulo \(q-1\). Since each such subset sum lies between \(0\) and
\((q-1)/2\), this forces the two subsets to be \(\varnothing\) and
\(\Z/f\Z\). In niveau \(2\) the argument is essentially the same, except that since $\varnothing,\Z/2f\Z$ are not profiles in niveau $2$, the conclusion is simply that there are no solutions $J,J'$.

  In particular $\Zss(\sigma) \not\subseteq \Zss(\sigma')$ and vice-versa. For this reason, in proving Theorem~\ref{thm:Zss} we will generally never be in the exceptional case of part $(iii)$ of the lemma, and will always have $x_i \in [-2,2]$. In arguments where the exceptional case is trivially excluded, e.g.\ because of a hypothesis that $\rhobar(\sigma,J) = \rhobar(\sigma',J')$ for some $J \not\in \{\varnothing,\Z/f\Z\}$, or that $u_i \neq p$ for some~$i$, we will pass over this without further mention.
\end{remark}

\begin{remark}\label{rem:all-zero}
    If $x_i = 0$ for all $i$, then Lemma~\ref{lem:carry-equation} immediately implies $u_i = u'_i$ for all $i$, and that $J,J'$ agree on all indices $i$ such that $u_i \neq 0$. Then \eqref{eq:rhobar-cong} implies that $t = t'$, and therefore $\sigma = \sigma'$. 
\end{remark}

As a first application of the lemma, we analyze the situation where $\rhobar(\sigma,J)$ is scalar for a niveau $2$ profile $J$.

\begin{cor}\label{cor:scalar-carry}
Suppose that $J$ is a profile such that $\rhobar(\sigma,J)$ is scalar. Then there exist unique integers $(y_i)$ such that
 \[ \eps_i^J u_i  = p y_{i-1} - y_i, \]
and if $J$ has niveau $2$ then $y_{i+f} = -y_i$ for all $i \in \Z/2f\Z$.
Furthermore  $|y_i| \le 1$ for all $i$.
\end{cor}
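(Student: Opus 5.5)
Apply the argument of Lemma~\ref{lem:carry-equation} directly, with the second weight taken to be ``zero''. If $\rhobar(\sigma,J)$ is scalar, its two characters coincide. With $f' = f$ or $2f$ according to the niveau of $J$, this gives the congruence
\[ \sum_{i \in J} p^{f'-i} u_i \equiv \sum_{i \notin J} p^{f'-i} u_i \pmod{p^{f'}-1}, \]
that is, $\sum_i p^{f'-i}\eps^J_i u_i \equiv 0 \pmod{p^{f'}-1}$. So the vector $(\eps^J_i u_i)$ lies in the lattice $\Lambda'$ from the proof of Lemma~\ref{lem:carry-equation}. Since $\Lambda'$ has the basis $v_j$ with $v_{j,j}=-1$ and $v_{j,j+1}=p$, we can write $\eps^J_i u_i = p y_{i-1} - y_i$ for unique integers $y_i$. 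Uniqueness holds because the $v_j$ are linearly independent: the map $(y_i)\mapsto (py_{i-1}-y_i)$ has determinant $\pm(p^{f'}-1)\neq 0$.

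For the antisymmetry in niveau $2$, I would repeat the earlier argument. The $u_i$ are $f$-periodic and $\eps^J_{i+f} = -\eps^J_i$ because $J$ is a niveau $2$ profile. Hence $(-y_{i+f})$ satisfies the same system, and uniqueness gives $y_{i+f}=-y_i$.

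The main point is the bound $|y_i|\le 1$, which is sharper than the earlier bound because now only one weight contributes, so $|\eps^J_i u_i|\le p$ rather than $2p$. Let $M=\max|y_j|$ and choose $i$ with $|y_{i-1}|=M$. Then $pM \le |\eps^J_i u_i| + |y_i| \le p + M$, so $M \le p/(p-1) < 2$ because $p\ge 3$. Hence $M\le 1$. This step needs no exceptional case, and equality $M=1$ is allowed. Note that the cases $u_i = p$ are harmless here, since $p/(p-1)$ is still strictly less than $2$ for odd $p$.
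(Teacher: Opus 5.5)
Your proposal is correct and is essentially the paper's argument. The paper chooses $t'$ with $\rhobar(\sigma,J)=\rhobar((\ur;t'),J')$ and applies Lemma~\ref{lem:carry-equation} to the pair $(\ur;t')$, $\sigma$, whereas you rerun that lemma's proof directly from the scalar congruence. The bound then comes from the same maximum argument in both, with $|\eps_i^J u_i|\le p$ in place of $2p$.
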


\begin{proof}
 Since $\rhobar(\sigma,J)$ is scalar, it is possible to choose $t' \in \Z/(q-1)\Z$ such that $\rhobar(\sigma,J) = \rhobar( (\ur;t') ,J')$ for some (indeed any) profile~$J'$ of the same niveau as $J$. Applying Lemma~\ref{lem:carry-equation} to the pair $(\ur;t')$ and $\sigma$ (in that order), we obtain unique integers $y_i$ such that
 \[ \eps_i^J u_i = py_{i-1} - y_i\] for all $i$, and $y_{i+f} = -y_i$ if $J$ has niveau $2$, giving the first part of the corollary. The bound $|y_i| \le 1$ follows by the same argument as for the bound in Lemma~\ref{lem:carry-equation}$(iii)$.
 \end{proof}

We saw in Remark~\ref{rem:same-f} that if $J$ has niveau $2$ and $\rhobar(\sigma,J)$ is scalar, then there also exists a profile $J'$ of niveau $1$ such that $\rhobar(\sigma,J') = \rhobar(\sigma,J)$. We now analyze the same situation with the niveaux reversed.

\begin{prop}
    \label{prop:niveau-2}
    Suppose that $J$ is a profile of niveau $1$ such that $\rhobar(\sigma,J)$ is scalar. Then there  also exists a profile $J'$ of niveau $2$ such that $\rhobar(\sigma,J') = \rhobar(\sigma,J)$ unless $\sigma = (\crit;t)$ for some $t$, in which case no such $J'$ exists.
\end{prop}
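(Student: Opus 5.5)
We would prove the proposition by translating scalarity into the existence of a bounded carry sequence, and then building an antiperiodic carry sequence out of a periodic one. First, by Corollary~\ref{cor:scalar-carry}, since $\rhobar(\sigma,J)$ is scalar with $J$ of niveau $1$, there are integers $y_i$, $i \in \Z/f\Z$, with $|y_i| \le 1$ and
\[ \eps^J_i u_i = p y_{i-1} - y_i \quad \text{for all } i. \]
Conversely, we would prove the following converse criterion in niveau $2$. Suppose there are integers $z_i$, $i \in \Z/2f\Z$, with $z_{i+f} = -z_i$ and $|p z_{i-1} - z_i| = u_i$ for all $i$. Let $J'$ consist of the indices $i$ with $pz_{i-1} - z_i = -u_i \neq 0$, together with one element of each pair $\{i, i+f\}$ with $u_i = 0$. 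Antiperiodicity of $z$ and $f$-periodicity of $u$ show that $J'$ is a niveau $2$ profile. The relation $\eps^{J'}_i u_i = p z_{i-1} - z_i$ gives $\sum_i p^{2f-i}\eps^{J'}_i u_i \equiv 0 \pmod{p^{2f}-1}$, which says precisely that the two characters of $\rhobar(\sigma,J')$ coincide, i.e.\ $\rhobar(\sigma,J')$ is scalar. It is then equal to $\rhobar(\sigma,J)$: both are scalar with the same determinant, which is determined by $\sigma$, and a square root of a character of $I_K$ coming from $G_K$ is pinned down up to the sign ambiguity. We would check that sign ambiguity directly, but expect that the determinant and the twist $\omega_0^t$ suffice.

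The construction itself goes by cases. Note that $u_i \le p$ excludes $(y_{i-1},y_i) = \pm(1,-1)$, since that would force $u_i = p+1$.

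\emph{Case 1: some $y_k = 0$.} Define $z_i = y_i$ for $i \in [k, k+f-1]$ (lifted to $\Z/2f\Z$), and $z_{i+f} = -z_i$. The required equality $|pz_{i-1} - z_i| = u_i$ is automatic at every index except the two "seams" $i = k$ and $i = k+f$. At the seam $i = k$ we have
\[ |p z_{k-1} - z_k| = |-p y_{k-1} - y_k| = |p y_{k-1} + y_k|, \]
and this equals $|p y_{k-1} - y_k| = u_k$ precisely because $y_k = 0$. The seam $i = k+f$ is handled by antiperiodicity. This produces the required $J'$.

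\emph{Case 2: all $y_i = \pm 1$.} Since $\pm(1,-1)$ is excluded, $y$ is constant. Then $u_i = p-1$ for all $i$, so $\sigma = (\crit;t)$.

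It remains to show that when $\sigma = (\crit;t)$ no niveau $2$ witness exists. If $J'$ had niveau $2$ with $\rhobar(\sigma,J')$ scalar, Corollary~\ref{cor:scalar-carry} gives an antiperiodic $z$ with $|z_i| \le 1$ and $|p z_{i-1} - z_i| = p-1$ for all $i$. Inspecting the possible values with $|z_{i-1}|, |z_i| \le 1$, this forces $z_{i-1} = z_i = \pm 1$. Hence $z$ is constant and nonzero, contradicting $z_{i+f} = -z_i$.

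The main obstacle is the converse criterion in the first paragraph: checking that a profile $J'$ built from $z$ really gives $\rhobar(\sigma,J')$ equal to $\rhobar(\sigma,J)$, not merely scalar. This requires the bookkeeping of the twist $\omega_0^t$ and the relation $(\omega'_0)^{q+1} = \omega_0$. I would handle it by comparing $\det$ and the explicit exponents of the congruences \eqref{eq:rhobar-cong}, as in the proof of Lemma~\ref{lem:carry-equation}.
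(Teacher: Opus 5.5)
Your construction of $J'$ (copy $y$ on a window beginning at a zero $y_k=0$ and extend antiperiodically) matches the paper's proof. So do your check that a non-critical $\sigma$ has some $y_k=0$ and your treatment of the critical case. The gap is the step you flag yourself. You only establish that $\rhobar(\sigma,J')$ is \emph{scalar}, and your proposed route from there to $\rhobar(\sigma,J')=\rhobar(\sigma,J)$ does not work. Equality of determinants only says the two scalar characters have the same square, so they agree only up to the quadratic character $\omega_0^{(q-1)/2}=(\omega'_0)^{(q^2-1)/2}$. In exponents, the two scalarity congruences give $\sum_{i\in J'}u_ip^{-i}\equiv(q+1)\sum_{i\in J}u_ip^{-i}$ only modulo $(q^2-1)/2$. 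The determinant is exactly what leaves this sign open, and the twist $\omega_0^t$ appears on both sides and cancels, so neither can resolve it. This ``stronger statement'' is where the content of the proposition lies, so as written the existence half is not proved.

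The missing idea is to use the specific shape of $J'$ together with the vanishing of $y$ at the seam. In your indexing, $J'$ agrees with the periodic extension of $J$ on $\{k+1,\dots,k+f\}$ and with its complement on $\{k+f+1,\dots,k+2f\}$. (Indices with $u_i=0$ can be ignored since they contribute nothing.) Since $p^{-f}\equiv q\pmod{q^2-1}$, the periodic extension of $J$ contributes $(q+1)\sum_{i\in J}u_ip^{-i}$. Writing $\delta=\tfrac12(1-\eps)$, the required congruence then reduces to $\sum_{i=k+1}^{k+f}\eps^J_iu_ip^{-i}\equiv0\pmod{q^2-1}$. Niveau-$1$ scalarity gives this only modulo $q-1$. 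However, substituting $\eps^J_iu_i=py_{i-1}-y_i$, the sum telescopes in $\Z[1/p]$ to $p^{-k}y_k-p^{-(k+f)}y_{k+f}$. This is $0$ exactly because $y_k=y_{k+f}=0$. That is how the paper finishes, with $k=-1$.
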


\begin{proof} Write $\sigma = (u_i;t)$.
The nonexistence of $J'$ when $\sigma = (\crit;t)$ is a consequence of Corollary~\ref{cor:scalar-carry}. Suppose $J'$ of niveau $2$ were to exist, and apply the Corollary. Since $u_i = p-1$ for all $i$ we have $y_{i-1} = y_i \in \{-1,1\}$ for all $i$, and recursively the sequence $y_i$ is constant. This contradicts $y_{i+f} = -y_i$. 

Now suppose that  $\sigma$ is non-critical and $\rhobar(\sigma,J)$ is scalar, with $J$ of niveau $1$. Corollary~\ref{cor:scalar-carry} gives integers $y_i \in [-1,1]$  such that $\eps_i^J u_i = py_{i-1} - y_i$ for all $i \in \Z/f\Z$. Since $\sigma$ is non-critical, some $y_i$ must be $0$. Without loss of generality suppose that $y_{-1} = 0$. Set $y'_i = y_i$ and $\eps_i^{J'} = \eps_i^J$ for $i \in \Z/2f\Z$  whose least non-negative residue lies in  $[0,f-1]$, and $y'_i = -y_i$, $\eps_i^{J'} = -\eps^J_i$ otherwise. Let $J'$ be the niveau $2$ profile implicitly defined by the choice of $\eps_i^{J'}$.  Now evidently
\begin{equation}\label{eq:niveau2-scalar}
     \eps_i^{J'} u_i = p y'_{i-1} - y'_i 
\end{equation}
for $i \neq 0,f$ (so that $\{i-1,i\}$ is contained in either $[0,f-1]$ or $[f,2f-1]$). But since $y'_{-1} = y'_{f-1} = 0$, the equation \eqref{eq:niveau2-scalar} holds for $i = 0,f$ as well, i.e., it holds for all $i \in \Z/2f\Z$.

Multiplying \eqref{eq:niveau2-scalar} by $p^{-i}$ and summing over $\Z/2f\Z$ proves that $\sum_{i\in J'} u_i p^{-i} - \sum_{i \not\in J'} u_i p^{-i} \equiv 0 \pmod{q^2-1}$, or in  other words that $\rhobar(\sigma,J')$ is scalar. However, we need the stronger statement that $\rhobar(\sigma,J') = \rhobar(\sigma,J)$, or equivalently that
\[ \sum_{i \in J'} u_i p^{-i} - (q+1) \sum_{i \in J} u_i p^{-i} \equiv 0 \pmod{q^2-1}.  \]
Since $\delta_J(i) = \frac{1}{2} (1 - \eps_i^J)$ and similarly for $J'$, the previous display rewrites as 
\[ \sum_{i= 0}^{2f-1} \tfrac{1}{2} (1-\eps_i^{J'}) u_i p^{-i} - \sum_{i=0}^{2f-1} \tfrac{1}{2} (1-\eps^J_i) u_i p^{-i} \equiv 0 \pmod{q^2-1}.  \]
From the definition of $\eps_i^{J'}$, the terms with $0 \le i < f$ in the two sums cancel, and the desired congruence is equivalent to
\[ \sum_{i=0}^{f-1} \eps_i^J u_i p^{-i} \equiv 0 \pmod{q^2-1}. \]
Substituting $\eps_i^J u_i = p y_{i-1} - y_i$, the terms on the left-hand side involving $y_i$ for $0 \le i < f-1$ telescope, leaving only the terms involving $y_{-1}$ and $y_{f-1}$. But $y_{-1} = y_{f-1} = 0$, so the previous congruence holds.
\end{proof}

As another early application of these ideas we show that the inclusion $\Zss((\ur;t))\subseteq\Zss((\crit;t))$ is simple. In fact we have the following stronger statement.

\begin{prop}\label{prop:critical-target}
Suppose that $
\Zss(\sigma)\subseteq\Zss((\crit;t)).$ Then $\sigma=(\ur;t)$ or $\sigma=(\crit;t).$
\end{prop}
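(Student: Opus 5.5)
The plan is to translate the inclusion into carry equations via Lemma~\ref{lem:carry-equation}, with $\sigma' = (\crit;t)$, and then squeeze the possibilities for $\sigma = (u_i;s)$. Two preliminary observations about $\Zss((\crit;t))$ will be used throughout.

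First, $\prod_{i\in\Z/f\Z}\omega_i^{p-1}=\omega_0^{\sum_i (p-1)p^{-i}}$ is trivial. Hence every niveau~$1$ point of $\Zss((\crit;t))$ has the shape $\omega_0^t\otimes(\chi\oplus\chi^{-1})$ with $\chi=\prod_{i\in J'}\omega_i^{p-1}$.

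Second, by Proposition~\ref{prop:niveau-2} no niveau~$2$ profile of $(\crit;t)$ gives a scalar representation.

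Comparing determinants, which are independent of the profile, gives the congruence $2s+\sum_i u_ip^{-i}\equiv 2t \pmod{q-1}$. This will eventually pin down $s$ once the $u_i$ are known.

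Next, for each profile $J$ of $\sigma$ I choose a profile $J'$ of $(\crit;t)$ with $\rhobar(\sigma,J)=\rhobar((\crit;t),J')$, after reordering the two characters if necessary. Remark~\ref{rem:same-f} lets me take $J'$ of the same niveau as $J$, except when $\rhobar(\sigma,J)$ is scalar, and then the second observation forces niveau~$1$. Lemma~\ref{lem:carry-equation} supplies carries $x_i\in[-2,2]$ with
\[ \eps^{J'}_i(p-1)-\eps^J_iu_i=px_{i-1}-x_i .\]
The exceptional case of part~(iii) is excluded because $p-1\neq p$. Part~(ii) says the parities of the $x_i$ do not depend on $(J,J')$. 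Reducing the displayed equation mod $p$ gives $x_i\equiv \eps^{J'}_i+\eps^J_iu_i \pmod p$. Together with $|x_i|\le 2$, this determines $x_i$ from $(u_i,\eps^J_i,\eps^{J'}_i)$ up to a small finite ambiguity, and the ambiguity is resolved by the parity constraint.

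I will then run this for well-chosen profiles of $\sigma$: first $J=\varnothing$ and $J=\Z/f\Z$, then singletons $J=\{j\}$ in niveau~$1$, and finally the niveau~$2$ profiles. The aim is to show that each $u_i$ lies in $\{0,p-1\}$, and then that the $u_i$ are all equal.

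\begin{itemize}
\item Flipping the sign $\eps^J_j$ at a single index $j$ with $0<u_j<p-1$ changes $\eps^J_ju_j$ by $2u_j$. Parity invariance then forces the carries to change by even amounts. I expect the mod-$p$ congruence to make this incompatible with $|x_i|\le 2$, unless $u_j\in\{0,p-1\}$.
\item If both values $0$ and $p-1$ occur among the $u_i$, I will use a niveau~$2$ profile of $\sigma$, built by the sign-flipping trick from the proof of Proposition~\ref{prop:niveau-2}, starting at an index where $u_i=0$. This should produce either a scalar niveau~$2$ point, contradicting the second observation, or an irreducible point whose carry sequence cannot satisfy the antiperiodicity $x_{i+f}=-x_i$.
\item The same antiperiodicity argument should also rule out $u_i=p$ occurring.
\end{itemize}

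Once $\sigma$ is known to be $(\ur;s)$ or $(\crit;s)$, the determinant congruence gives $2s\equiv 2t$ in both cases, since $\sum_i (p-1)p^{-i}\equiv 0 \pmod{q-1}$. This leaves only the ambiguity $s=t+(q-1)/2$. I will remove it by checking one explicit point. For $\sigma=(\ur;s)$, the point $\omega_0^s\otimes(1\oplus1)$ must equal some $\omega_0^t\otimes(\chi\oplus\chi^{-1})$. This requires $\chi=\chi^{-1}=\omega_0^{s-t}$, and a subset-sum estimate as in Remark~\ref{rem:exceptional-case} should then show $s=t$. For $\sigma=(\crit;s)$, I will use the profile $J=\varnothing$ in the same way.

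The main obstacle is the middle step, namely excluding intermediate values $0<u_j<p-1$ and mixtures of the values $0$ and $p-1$. This is where the carry bookkeeping must be done carefully across several profiles simultaneously. I would first try to handle it by arguing at a single index $j$, using the two profiles $\varnothing$ and $\{j\}$, exploiting the fact that $x_{j-1}$ and $x_j$ must each shift by an even amount. Only if that fails would I pass to the niveau~$2$ argument.
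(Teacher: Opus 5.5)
\textbf{There is a genuine gap in the middle step.} You leave it as an expectation, and neither mechanism you propose for it works as stated.

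The two-profile, single-index argument cannot exclude intermediate values. Take $u_j=1$ and $p\ge 5$. For $J=\varnothing$, the carry equation at $j$ is satisfied by $(x_{j-1},x_j)=(-1,0)$ with $j\in K$. For $J=\{j\}$, it is satisfied by $(x^{\{j\}}_{j-1},x^{\{j\}}_j)=(1,0)$ with $j\notin K_{\{j\}}$. These respect the mod-$p$ congruence, the bound $|x_i|\le 2$, and parity invariance simultaneously. So no contradiction appears locally, and you would have to chase carries around the whole cycle, which you have not done.

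In the niveau-$2$ branch, producing a scalar point $\rhobar(\sigma,J)$ with $J$ of niveau $2$ is not a contradiction. As you noted yourself a paragraph earlier, and as Corollary~\ref{cor:contain-profiles} records, such a point is matched by a niveau-$1$ profile of $(\crit;t)$. Your second observation says nothing about it.

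\textbf{The missing idea is already implicit in your first observation: reduce exponents modulo $p-1$,} which divides $q-1$. Every niveau-$1$ point of $\Zss((\crit;t))$ is $\omega_0^t\otimes(\chi\oplus\chi^{-1})$ with $\chi$ a power of $\omega_0^{p-1}$, so both its exponents are $\equiv t\pmod{p-1}$. Since $\omega_i=\omega_0^{p^{-i}}$ and $p\equiv 1\pmod{p-1}$, compare with $\rhobar(\sigma,\varnothing)$ and $\rhobar(\sigma,\{j\})$, which are matched in niveau $1$ by Remark~\ref{rem:same-f}. This gives $s\equiv t$ and $u_j\equiv 0\pmod{p-1}$, hence $u_j\in\{0,p-1\}$ for all $j$; it also rules out $u_j=p$.

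Mixtures are then excluded without any niveau-$2$ profiles, as the paper does:
\begin{enumerate}
\item Write $\rhobar(\sigma,\varnothing)=\rhobar((\crit;t),K)$ with $K\neq\Z/f\Z$, so that $s-t\equiv(p-1)\sum_{i\in K}p^{-i}$. Set $S=\{i:u_i=p-1\}$, divide by $p-1$, and work modulo $M=(q-1)/(p-1)$.
\item The paper uses an elementary base-$p$ digit lemma: if $\sum_i a_ip^{f-i}\equiv\sum_i b_ip^{f-i}\pmod M$ with $a_i\in\{0,1,2\}$ and $b_i\in\{0,1\}$, then $a_i-b_i$ is constant.
\item Applied to the profiles $\{j\}$ with $j\in S$, the lemma gives $K\cap S=\varnothing$.
\item Applied to the determinant congruence $2\sum_{i\in K}p^{f-i}+\sum_{i\in S}p^{f-i}\equiv 0\pmod M$, it gives $K=\varnothing$ and $S\in\{\varnothing,\Z/f\Z\}$.
\end{enumerate}
This forces $s=t$ at the same time. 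Your separate subset-sum treatment of the ambiguity $s=t+(q-1)/2$ is fine, but it becomes unnecessary.
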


\begin{proof} The proof will require the following claim. If \[ \sum_{i \in \Z/f\Z} a_i p^{f-i} \equiv \sum_{i \in \Z/f\Z} b_i p^{f-i} \pmod{p^{f-1} + \cdots + p+ 1} \]
with $a_i \in \{0,1,2\}$ and $b_i \in \{0,1\}$ for all $i$, then $a_i - b_i$ is constant. To see this, we treat the two sides as the integers $A = \sum_{i=1}^f a_i p^{f-i}$ and  $B = \sum_{i=1}^f b_i p^{f-i}$, so that the sums give the base $p$ expansions of $A$, $B$ respectively. Set $M = p^{f-1} + \cdots + p + 1$. Evidently $A-B \in \{-M,0,M,2M\}$. If $A-B = -M$ then $a_i = 0$ and $b_i = 1$ for all~$i$. If $A-B = 0$ then by uniqueness of base-$p$ expansions we have $a_i = b_i$ for all $i$. If $A-B = M$ then $A = \sum_{i=1}^{f} (b_i + 1)p^{f-i}$. Since $b_i + 1 < p$, this is the base-$p$ expansion of $A$, and $a_i = b_i + 1$ for all $i$. If $A - B = 2M$ then $a_i = 2$ and $b_i = 0$ for all $i$. In all cases the desired conclusion holds.

Without loss of generality suppose that $t = 0$. Write $\sigma = (u_i;t')$. Write $\rhobar(\sigma,\varnothing) = \rhobar((\crit;0),K)$ for a profile $K$, which we can take to have niveau $1$ by Remark~\ref{rem:same-f}. Similarly write $\rhobar(\sigma,\{i\}) = \rhobar((\crit;0),K_i)$ for a niveau $1$ profile $K_i$ for each $i$. Each $\rhobar((\crit;0),J)$ with $J$ of niveau $1$ has the form $\omega_0^{a} \oplus \omega_0^{b}$ with $a,b$ divisible by $p-1$. Applied to $\rhobar(\sigma,\varnothing)$ and $\rhobar(\sigma,\{i\})$ this shows that $t'$ and all of the $u_i$ are divisible by $p-1$. In particular $u_i \in \{0,p-1\}$ for all $i$. If~$K$ is $\Z/f\Z$, we replace it with~$\varnothing$ instead.  

 Now~\eqref{eq:rhobar-cong} shows that $t' \equiv (p-1) \sum_{i \in K} p^{f-i} \pmod{q-1}$, and another application of~\eqref{eq:rhobar-cong} for the profile $\{j\}$ shows, after dividing by $p-1$, that
\[p^{f-j} + \sum_{i \in K} p^{f-i} \equiv \sum_{i \in K_j} p^{f-i} \pmod{M} \]
for each $j$ in the set $S :=\{i:u_i=p-1\}$.
An application of our claim then shows that $j \not\in K$. Therefore $K\cap S = \varnothing$.

The equality $\det \rhobar(\sigma,\varnothing) = \det \rhobar((\crit;0),K)$ gives $2t' + (p-1) \sum_{i \in S} p^{f-i} \equiv 0 \pmod{q-1}$, or equivalently
\[ \sum_{i \in K} 2\cdot p^{f-i} + \sum_{i \in S} p^{f-i} \equiv 0 \pmod{M}. \]
Since $K\cap S=\varnothing$ and $K \neq \Z/f\Z$, another application of our claim gives $K = \varnothing$ and $S \in \{\varnothing,\Z/f\Z\}$. 
Thus $t' = 0$ and $\sigma=(\ur;0)$ or $\sigma=(\crit;0)$.
\end{proof}

\subsection{Further consequences of the carry equation}
\begin{defn}
Let $\sigma,\sigma'$ be a pair of phantom Serre weights.
\begin{enumerate}
    \item  We define $\cP(\sigma,\sigma')$ to be the set of profiles $J$ such that there exists a profile $K_J$ of the same niveau with $\rhobar(\sigma,J) = \rhobar(\sigma',K_J)$. 

    \item A \emph{pinning} is a choice, for each $J \in \cP(\sigma,\sigma')$, of a profile  $K_J$ of the same niveau with $\rhobar(\sigma,J) = \rhobar(\sigma',K_J)$. Given a pinning we write $x^J_i$ for the integers of Lemma~\ref{lem:carry-equation} applied to the pair of profiles $J, K_J$. 
\end{enumerate}
\end{defn}

We note the following subtlety in the definition of $\cP(\sigma,\sigma')$. If $J$ has niveau~$2$ and $\rhobar(\sigma,J)$ is scalar, then the existence of some $J'$ such that $\rhobar(\sigma,J) = \rhobar(\sigma',J')$ need not imply that $J \in \cP(\sigma,\sigma')$, since it may not be possible to choose $J'$ of niveau $2$. By Proposition~\ref{prop:niveau-2}, this happens precisely when $\sigma'$ is critical. To emphasize this, we highlight the case $\Zss(\sigma) \subseteq \Zss(\sigma')$.

\begin{cor}\label{cor:contain-profiles}
    Suppose that $\Zss(\sigma) \subseteq \Zss(\sigma')$. If $\sigma'$ is non-critical then the set $\cP(\sigma,\sigma')$ contains all profiles, while if $\sigma'$ is critical then $\cP(\sigma,\sigma')$ contains all profiles except for any niveau $2$ profiles $J$ with $\rhobar(\sigma,J)$ scalar.
\end{cor}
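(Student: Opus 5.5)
Fix an arbitrary profile $J$ and show directly that $J\in\cP(\sigma,\sigma')$, except in the one situation the statement excludes. Since $\Zss(\sigma)\subseteq\Zss(\sigma')$, the representation $\rhobar(\sigma,J)$ equals, up to isomorphism, $\rhobar(\sigma',K)$ for some profile $K$. After possibly swapping the order of the two characters, which corresponds to replacing $K$ by its complement (again a profile of the same niveau), we may assume equality as an ordered pair. The only issue is whether $K$ can be chosen to have the same niveau as $J$.

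Suppose first that $\rhobar(\sigma,J)$ is not scalar on inertia. By Remark~\ref{rem:same-f}, applied to $\rhobar(\sigma,J)$ and to $\rhobar(\sigma',K)$ (the duals are harmless), the niveau of any profile realizing this representation is determined by whether it is split or irreducible. Hence $K$ automatically has the same niveau as $J$, and $J\in\cP(\sigma,\sigma')$.

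Now suppose $\rhobar(\sigma,J)$ is scalar. Then $\rhobar(\sigma',K)$ is scalar as well, and only the niveau needs adjusting. If $K$ has niveau $2$ and $J$ has niveau $1$, Remark~\ref{rem:same-f} provides a niveau $1$ profile $K'$ with $\rhobar(\sigma',K')=\rhobar(\sigma',K)$; since the representation is scalar, the order of characters is irrelevant. If $K$ has niveau $1$ and $J$ has niveau $2$, Proposition~\ref{prop:niveau-2} applied to $\sigma'$ gives a niveau $2$ profile $K'$ with $\rhobar(\sigma',K')=\rhobar(\sigma',K)$, provided $\sigma'$ is non-critical. This gives the first assertion, and shows that for critical $\sigma'$ the only possible failures are niveau $2$ profiles $J$ with $\rhobar(\sigma,J)$ scalar.

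For critical $\sigma'$, the statement says these profiles are genuinely excluded. Proposition~\ref{prop:niveau-2} says no niveau $2$ profile $K$ gives a scalar $\rhobar(\sigma',K)$. A niveau $2$ profile $K$ with $\rhobar(\sigma',K)$ non-scalar cannot equal a scalar representation either. So no niveau $2$ profile $K$ satisfies $\rhobar(\sigma',K)=\rhobar(\sigma,J)$, and such $J$ do not lie in $\cP(\sigma,\sigma')$.

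The main care point is the bookkeeping between isomorphism and ordered equality, together with the scalar-on-inertia subtlety in Remark~\ref{rem:same-f}. The rest is direct citation.
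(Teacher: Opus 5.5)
Your proposal is correct and follows the paper's own justification, given in the paragraph preceding the corollary: Remark~\ref{rem:same-f} handles non-scalar representations and the niveau~$2\to 1$ adjustment for scalar ones, while Proposition~\ref{prop:niveau-2} handles the niveau~$1\to 2$ adjustment and shows it fails exactly when $\sigma'$ is critical. One small citation nuance: the fact that no niveau~$2$ profile $K$ makes $\rhobar(\sigma',K)$ scalar for critical $\sigma'$ is not the literal statement of Proposition~\ref{prop:niveau-2}; it is what the first paragraph of its proof shows (via Corollary~\ref{cor:scalar-carry}), and it also follows by combining that statement with Remark~\ref{rem:same-f}.
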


    Suppose for the remainder of the section that $\varnothing \in \cP(\sigma,\sigma')$, and we fix a pinning. It will be convenient to abbreviate $x_i := x^\varnothing_i$ and $K := K_\varnothing$, and we do so for the remainder of the paper. If $i \in \Z/2f\Z$ then $x_i$ means $x_{i \, \text{(mod $f$)}}$. Note that $\eps^\varnothing_i = 1$ for all $i$.  

    If also $J \in \cP(\sigma,\sigma')$, we shall need to understand the joint behavior of the integers $x_i$ and $x_i^J$, beyond what is immediately provided by Lemma~\ref{lem:carry-equation} (that they have the same parity). When jointly analyzing the equations coming from  Lemma~\ref{lem:carry-equation}$(i)$ for the pair of  profiles $\varnothing,J$ it is often better to use the sum and difference of those equations instead.   
    To that end, for any $J \in \cP(\sigma,\sigma')$ we write $s_i^J = \frac 12 (x_i^J + x_i)$  and $d_i^J = \frac 12 (x_i^J - x_i)$, so that $s_i^J,d_i^J \in [-2,2]$ apart from the exceptional case when $p=3$.   
    These are integers thanks to  Lemma~\ref{lem:carry-equation}$(ii)$. 
    Then
    \begin{equation}\label{eq:sum-cx}
        \frac{1}{2}(\eps_i^{K_J} + \eps_i^K)u'_i - \delta_{J^c}(i) u_i = p s^J_{i-1} - s^J_i
    \end{equation}
    and
        \begin{equation}\label{eq:diff-cx}
        \frac{1}{2}(\eps_i^{K_J} - \eps_i^K)u'_i + \delta_{J}(i) u_i = p d^J_{i-1} - d^J_i.
    \end{equation}
Since $\eps_i^{K_J} = \pm \eps_i^K$, we deduce the following.

\begin{lemma}\label{lem:alt}
Suppose that $\varnothing, J \in \cP(\sigma,\sigma')$. Then for each $i$ one of the following alternatives holds: either
\[ - \delta_{J^c}(i) u_i = p s^J_{i-1} - s^J_i \qquad \text{or} \qquad \delta_{J}(i) u_i = p d^J_{i-1} - d^J_i.  \]
\end{lemma}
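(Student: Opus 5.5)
The lemma follows by a direct case split on the sign relation between $\eps_i^{K_J}$ and $\eps_i^K$, applied to the two equations \eqref{eq:sum-cx} and \eqref{eq:diff-cx}. So the first task is to make sure those two displayed equations are correctly derived.

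To derive them, I would write down Lemma~\ref{lem:carry-equation}$(i)$ for the pinned pair $(\varnothing,K)$:
\[ \eps_i^K u'_i - u_i = p x_{i-1} - x_i, \]
using $\eps^\varnothing_i = 1$. I would also write it for the pair $(J,K_J)$:
\[ \eps_i^{K_J} u'_i - \eps_i^J u_i = p x^J_{i-1} - x^J_i. \]
One subtlety is that if $J$ has niveau $2$ and $\varnothing$ has niveau $1$, the indices live in different groups. In that case I would regard the niveau $1$ data $x_i$, $K$ and $u_i$ as $f$-periodic on $\Z/2f\Z$, as in the proof of Lemma~\ref{lem:carry-equation}$(ii)$; the paper's convention that $x_i$ means $x_{i \bmod f}$ already does this. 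The $f$-periodic extension still satisfies the first equation, since the equation is local in $i$.

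Next I would take half the sum and half the difference of the two equations. This uses $\tfrac12(1+\eps_i^J) = \delta_{J^c}(i)$ and $\tfrac12(1-\eps_i^J) = \delta_J(i)$, together with the definitions of $s^J_i$ and $d^J_i$. These are integers by Lemma~\ref{lem:carry-equation}$(ii)$, though integrality is not actually needed for the identity. The result is exactly \eqref{eq:sum-cx} and \eqref{eq:diff-cx}.

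Finally I split according to the relation between $\eps_i^{K_J}$ and $\eps_i^K$; since both lie in $\{\pm1\}$, one of the following holds.
\begin{itemize}
\item If $\eps_i^{K_J} = -\eps_i^K$, then the $u'_i$ term in \eqref{eq:sum-cx} vanishes, giving $-\delta_{J^c}(i)u_i = p s^J_{i-1} - s^J_i$.
\item If $\eps_i^{K_J} = \eps_i^K$, then the $u'_i$ term in \eqref{eq:diff-cx} vanishes, giving $\delta_J(i)u_i = p d^J_{i-1} - d^J_i$.
\end{itemize}
Either way, one of the two alternatives holds for each $i$, which is the statement.

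There is no genuine obstacle here. The only point needing care is the mixed-niveau bookkeeping described above, so that both carry equations are indexed over the same $\Z/f'\Z$.
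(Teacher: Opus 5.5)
Your proposal is correct and matches the paper's argument. The paper also obtains \eqref{eq:sum-cx} and \eqref{eq:diff-cx} as the half-sum and half-difference of the two carry equations, then uses $\eps_i^{K_J} = \pm\eps_i^K$ to kill the $u'_i$ term in one of them. Your handling of the mixed-niveau case, by extending $x_i$ and $K$ $f$-periodically to $\Z/2f\Z$, agrees with the paper's stated convention.
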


    Here are three sample applications of the lemma that will be used in the next section. In what follows we will frequently use the following observations.
    One of the two alternatives of Lemma~\ref{lem:alt} will always be either $ps_{i-1}^J = s_i^J$ or $pd_{i-1}^J = d_i^J$, depending on whether $i \in J$ or $i \not\in J$. Outside the exceptional case, we have 
    $s_{i-1}^J , s_i^J \in [-2,2]$ but $p \ge 3$, so that $ps_{i-1}^J = s_i^J$ implies $s_{i-1}^J = s_i^J = 0$;\ and similarly for $p d_{i-1}^J = d^J_i$.   Also outside the exceptional case,  if  $x_i < 0$ then $s_i^J \le 0$ and $d_i^J \ge 0$:\ consider that if $x_i = -1$ then $x_i^J = \pm 1$ and if $x_i = -2$ then $x_i^J \in \{-2,0,2\}$.
    
    \begin{lemma}\label{lem:I-profile}
    Let $I$ be the niveau $1$ profile $\{ i : 0 < u_i < p \}$. If $\varnothing, I \in \cP(\sigma,\sigma')$ and $x_i < 0$, then $(u_i,u_{i+1}) \neq (1,p)$.
    \end{lemma}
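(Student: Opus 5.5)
The plan is to argue by contradiction. Suppose $x_i<0$ and $(u_i,u_{i+1})=(1,p)$, and apply Lemma~\ref{lem:alt} with $J=I$ at the two indices $i$ and $i+1$. Since $u_i = 1 \neq p$, we are outside the exceptional case of Lemma~\ref{lem:carry-equation}$(iii)$. So all of $x_j, x^I_j, s^I_j, d^I_j$ lie in $[-2,2]$, and the observations preceding the lemma apply. Write $s_j = s^I_j$ and $d_j = d^I_j$. The key membership facts are $i \in I$ (as $0<u_i=1<p$) and $i+1 \notin I$ (as $u_{i+1}=p$). Since $x_i<0$, we also have $s_i \le 0$ and $d_i \ge 0$. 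Finally, $x_i = s_i - d_i$ by definition.

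First look at index $i+1 \notin I$. Lemma~\ref{lem:alt} gives either $-p = p s_i - s_{i+1}$, or $0 = p d_i - d_{i+1}$.
\begin{itemize}
\item In the second alternative, the usual observation forces $d_i = d_{i+1} = 0$.
\item In the first alternative, $s_{i+1} = p(s_i+1)$ with $|s_{i+1}|\le 2 < p$. This forces $s_i = -1$ and $s_{i+1}=0$.
\end{itemize}

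Next look at index $i \in I$. Lemma~\ref{lem:alt} gives either $0 = p s_{i-1} - s_i$, which forces $s_{i-1}=s_i=0$, or $1 = p d_{i-1} - d_i$.

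Now combine the cases.
\begin{itemize}
\item \emph{Case $d_i=0$.} The alternative $1 = p d_{i-1}$ is impossible, so $s_i = 0$. Then $x_i = s_i - d_i = 0$, contradicting $x_i<0$.
\item \emph{Case $s_i=-1$.} The alternative $s_i=0$ is excluded, so $1 = p d_{i-1} - d_i$. Hence $d_i \equiv -1 \pmod p$ with $0 \le d_i \le 2$. This is only possible if $p=3$ and $d_i=2$ (with $d_{i-1}=1$). Then $x_i = s_i - d_i = -3$, which contradicts $x_i \in [-2,2]$.
\end{itemize}
In both cases we reach a contradiction, so $(u_i,u_{i+1}) \neq (1,p)$.

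The main point needing care is the last subcase, where $p=3$ must be handled by the a priori bound $|x_i|\le 2$ rather than by divisibility. I expect no real obstacle beyond bookkeeping of signs in \eqref{eq:sum-cx} and \eqref{eq:diff-cx}, and confirming that the exceptional case is excluded by $u_i=1$.
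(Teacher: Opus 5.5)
Your proposal is correct and is essentially the paper's proof: Lemma~\ref{lem:alt} for the profile $I$ at the indices $i$ and $i+1$, combined with the bounds $|s^I|,|d^I|\le 2$ and the sign facts $s^I_i\le 0\le d^I_i$. The only difference is the order. The paper first uses index $i$ to get $s^I_i=0$, with the $p=3$ subcase $d^I_i=2$ forcing $x^I_i=2$, and then reaches a contradiction at $i+1$. You first use $i+1$ to get $d^I_i=0$ or $s^I_i=-1$, and then rule out both at $i$, handling $p=3$ via $x_i=s^I_i-d^I_i=-3$.
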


    \begin{proof}
        Assume $u_i = 1$, so that $i \in I$.  We claim that $s^I_i = 0$. In the first alternative of Lemma~\ref{lem:alt}  we have $p s^I_{i-1} = s^I_i$, implying $s_{i-1}^I = s^I_i = 0$. In the second alternative we get $1 = p d_{i-1}^I - d_i^I$. But the hypothesis $x_i < 0$ implies that $d_i^I \in \{0,1,2\}$, and so the only possibility is that $p=3$, $d_{i-1}^I = 1$, and $d_i^I = 2$. The latter forces  $x_i = -2$ and $x_i^I = 2$, and so again $s^I_i = 0$. 

        Now suppose in addition that $u_{i+1} = p$, so that $i+1 \not\in I$,        
        and 
        consider Lemma~\ref{lem:alt} applied at $i+1$.  Since $s_i^I = 0$ by the previous paragraph, the first alternative becomes $-p = -s^I_{i+1}$, which contradicts the bounds on $s^I_{i+1}$. The second alternative gives $p d^I_{i} - d^I_{i+1} = 0$, and therefore $d^I_i = d^I_{i+1} = 0$. But $s^I_i = 0$ implies $d^I_i = |x_i| \ge 1$, and again we obtain a contradiction.
    \end{proof}

\begin{lemma}\label{lem:P-profile}
      Let $P$ be the niveau $2$ profile satisfying 
      $P \cap \{0,\ldots,f-1\} = \{ i : u_i = p \}$. If $\varnothing\in \cP(\sigma,\sigma')$,  $u_0 = 0$, $(u_{i-1},u_i) \neq (0,p)$ for all $i$, and $x_i < 0$ for all $i$, then $P \not\in \cP(\sigma,\sigma')$.
\end{lemma}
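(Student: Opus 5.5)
The plan is to argue by contradiction. Suppose $P \in \cP(\sigma,\sigma')$, fix the pinning $K_P$, and work with the integers $s_i := s^P_i$ and $d_i := d^P_i$ for $i \in \Z/2f\Z$. Here $x_i$ is extended $f$-periodically, while $x^P_{i+f} = -x^P_i$ by Lemma~\ref{lem:carry-equation}$(i)$. The first step is the symmetry
\[ s_{i+f} = \tfrac12(-x^P_i + x_i) = -d_i, \qquad d_{i+f} = -s_i. \]
Since every $x_i<0$, the observation preceding Lemma~\ref{lem:I-profile} gives $s_i \le 0 \le d_i$ for all $i$. Moreover $x_i = s_i - d_i \ne 0$, so $s_i$ and $d_i$ are never simultaneously zero. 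The exceptional case of Lemma~\ref{lem:carry-equation}$(iii)$ does not arise because $u_0 = 0$.

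Next I would extract a starting point from the index $0$, using Lemma~\ref{lem:alt} in its $\Z/2f\Z$ form. Since $u_0 = 0$ we have $0 \notin P$ and $f \in P$. At $i=0$ the two alternatives read $0 = ps_{-1} - s_0$ or $0 = pd_{-1}-d_0$, so either $s_{-1}=s_0=0$ or $d_{-1}=d_0=0$. Call this a zero of \emph{$s$-type} or \emph{$d$-type} at $0$, respectively; by the previous paragraph exactly one type occurs. By the symmetry, a zero of $s$-type at $0$ is the same as a zero of $d$-type at $f$ (namely $d_f = -s_0 = 0$), and vice versa. So the type of zero switches when we pass from $i$ to $i+f$.

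The core of the argument is a propagation lemma. Knowing which of $s_{i-1}, d_{i-1}$ vanishes, I want to determine which of $s_i, d_i$ vanishes, and show that the type is preserved as $i$ increases from $0$ to $f$. The case split is on $u_i \in \{0\}$, $(0,p)$, or $\{p\}$, together with whether $i \in P$; within $[0,f-1]$ we have $i\in P$ exactly when $u_i = p$, and the pattern is reversed in $[f,2f-1]$. In each case we plug the known zero into the two alternatives of Lemma~\ref{lem:alt}, namely $-\delta_{P^c}(i)u_i = ps_{i-1}-s_i$ and $\delta_P(i)u_i = pd_{i-1}-d_i$. We then use the sign constraints $s\le 0\le d$, the bounds $|s|,|d|\le 2$, and the fact that $ps_{i-1}=s_i$ forces both to vanish. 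The goal is to show that the vanishing coordinate stays of the same type, or that the nonvanishing one stays nonzero, until the next index $j$ with $u_j = 0$. There the argument from index $0$ recreates a genuine zero pair, and the claim is that its type matches the previous one. The hypothesis $(u_{i-1},u_i)\neq(0,p)$ is used precisely to exclude the one transition where an $s$-type zero at $i-1$ could be converted into a $d$-type zero at $i$: a $u_i = p$ step immediately after a zero would otherwise allow $pd_{i-1} - d_i = p$ with $d_{i-1}=1$, $d_i = 0$.

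Granting the propagation lemma, the zero at $0$ transports to $f$ with the same type. This contradicts the symmetry established above, which says the type at $f$ is the opposite of the type at $0$. Hence $P\notin\cP(\sigma,\sigma')$. I expect the main obstacle to be the propagation step through runs of indices with $0<u_i<p$ and with $u_i = p$. There the alternatives do not force exact zeros, only inequalities, so the invariant must be formulated carefully. My first attempt would use an invariant of the form ``$s_i = 0$ and $d_i = |x_i|$'' versus ``$d_i=0$ and $s_i=x_i$''. If that fails to be preserved, I would weaken it to a sign statement, such as whether $x^P_i$ has the same sign as $x_i$, and check it case by case for $p=3$ and $p>3$.
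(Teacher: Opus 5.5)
Your architecture matches the paper's. You argue by contradiction, use $u_0=0$ to get either $s_{-1}=s_0=0$ or $d_{-1}=d_0=0$, propagate across $1\le i\le f-1$ using Lemma~\ref{lem:alt} (where $i\in P$ iff $u_i=p$), and finish with a clash against $x^P_{i+f}=-x^P_i$. You place the hypothesis $(u_{i-1},u_i)\neq(0,p)$ exactly where it is needed. Your antisymmetry $s_{f-1}=-d_{-1}$, $d_{f-1}=-s_{-1}$ also makes the paper's detour through $|x_{-1}|\le 1$ unnecessary.

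\textbf{The gap.} The propagation lemma is the whole content, it is left unproved, and the invariant you plan to carry is not preserved. In the $d$-branch, take $(s_{i-1},d_{i-1})=(-1,0)$ and $u_i=p-1$. The first alternative $-(p-1)=ps_{i-1}-s_i$ holds with $s_i=-1$, and if $x_i=-2$ then $d_i=1$ and $x^P_i=0$. Lemma~\ref{lem:alt} at $i$ does not exclude this state, yet neither $s_i$ nor $d_i$ vanishes. So neither your exact-zero ``types'' nor the strict condition that $x^P_i$ has the sign of $x_i$ survives this step.

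\textbf{The invariant that works} is one-sided: whether $s_i=0$, equivalently whether $x^P_i>0$. For $1\le i\le f-1$ two facts suffice.
\begin{enumerate}
\item If $s_{i-1}=0$ and $u_i\neq p$, then $i\notin P$. The second alternative $pd_{i-1}=d_i$ would give $d_{i-1}=0$, hence $x_{i-1}=0$. So the first alternative holds and gives $s_i=u_i$; since $s_i\le 0\le u_i$, this forces $s_i=u_i=0$.
\item If $s_i=0$ then $s_{i-1}=0$. The second alternative is impossible: its left side lies in $\{0,p\}$, forcing $p\mid d_i$, so $d_i=0=s_i$ and $x_i=0$. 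So the first alternative holds, giving $ps_{i-1}=-\delta_{P^c}(i)u_i\in(-p,0]$, hence $s_{i-1}=0$.
\end{enumerate}

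Now close both branches.
\begin{itemize}
\item If $s_{-1}=s_0=0$: (i) together with the $(0,p)$ hypothesis gives inductively $u_i=s_i=0$ for $0\le i\le f-1$. Then $s_{f-1}=0=-s_{-1}=d_{f-1}$, so $x_{f-1}=0$, a contradiction.
\item If $d_{-1}=d_0=0$: then $s_0\neq 0$, while $s_{f-1}=-d_{-1}=0$. Iterating (ii) downward from $f-1$ gives $s_0=0$, a contradiction.
\end{itemize}
This is the paper's proof. Its backward claim (ii) is the contrapositive of the forward statement $s_{i-1}\neq0\Rightarrow s_i\neq0$, which your phrase ``the nonvanishing one stays nonzero'' was reaching for. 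No re-creation of zero pairs at later indices with $u_j=0$, and no separate case analysis for $p=3$, is needed.
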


\begin{proof} We argue by contradiction, so let us suppose that $P \in \cP(\sigma,\sigma')$.
Suppose $1 \le i \le f-1$. We claim, first, that if $s^P_i = 0$ then also $s^P_{i-1} = 0$. In the second alternative of Lemma~\ref{lem:alt}, the left-hand side is always divisible by $p$ for the profile $P$, and therefore $d_i^P = 0$. But $d_i^P = s_i^P = 0$ implies $x_i = 0$, contradicting our hypothesis, so it must be the first alternative that holds. In that alternative the right-hand side is divisible by $p$. But if the left-hand side is divisible by $p$, then it must be $0$ (because if $u_i = p$ then $\delta_{P^c}(i) = 0$). Therefore also $s^P_{i-1} = 0$. 

In the converse direction, we claim that if $s_{i-1}^P = 0$ and $u_i \neq p$ then $s_i^P = u_i = 0$, again for $1 \le i \le f-1$. As in the previous paragraph, the second alternative gives $d_i^P = 0$. Since  $u_i \neq p$ by hypothesis, the left-hand side of this alternative is $0$, and we conclude $d_{i-1}^P = 0$. But $d_{i-1}^P = s_{i-1}^P = 0$ contradicts $x_{i-1} \neq 0$, so again it is the first alternative that holds. Now $s_i^P = \delta_{P^c}(i) u_i = u_i$. But $x_i < 0$ implies $s_i^P \le 0$, and this combined with $s_i^P = u_i \ge 0$ gives $s_i^P = u_i = 0$.

Finally consider $i = 0$. Since $u_0 = 0$ the alternatives give either $s_{-1}^P = s_0^P = 0$ or $d_{-1}^P = d_0^P = 0$. Also since $u_0 = 0$ the equation $\eps_0^K u'_0 = p x_{-1} - x_0$ gives $|x_{-1}| \le 1$. Therefore $x_{-1}  = -1$, and $x^P_{-1} = \pm 1$.

Suppose first that $d^P_{-1} = d^P_0 = 0$, so that $x_{-1} = x^P_{-1} = -1$. Then $x^P_{f-1} = -x^P_{-1} = 1$, and $s_{f-1}^P = 0$. Iteratively applying the conclusion in the first paragraph of the proof, we conclude that $s_0^P = 0$. But $s_0^P = d_0^P = 0$ implies $x_0 = 0$, a contradiction. So we must instead have $s^P_{-1} = s^P_0 = 0$. Iteratively applying the conclusion in the second paragraph of the proof, which is possible because of the hypothesis $(u_{i-1},u_i) \neq (0,p)$ for all $i$, we obtain $s^P_{f-1} = 0$. But $s^P_{-1} = s^P_{f-1} = 0$ is again a contradiction, because $x_{f-1}^P = -x_{-1}^P \neq x_{-1}^P$.
\end{proof}

\begin{lemma}\label{lem:x-negative-u-positive}
Suppose that $\varnothing, J \in \cP(\sigma,\sigma')$. 
\begin{enumerate}
    \item If $x_{i-1},x_i < 0$, $u_i > 0$, and $s_{i-1}^J = 0$ then $i \in J$.

    \item If $x_{i-1},x_i,x_{i+1} < 0$, $u_{i-1},u_i,u_{i+1} > 0$, and $J \cap \{i-1,i,i+1\} = \{i\}$, then $u_{i-1} = p + 1 + x_{i-1}$ and $u_i = p - d_i^J$. 
\end{enumerate}
\end{lemma}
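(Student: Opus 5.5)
For part (i) I will argue by contradiction using Lemma~\ref{lem:alt} at the index $i$. For part (ii) I will apply Lemma~\ref{lem:alt} at each of the indices $i-1$, $i$, $i+1$, eliminate alternatives one at a time, and then pin down the remaining carries using the bounds in $[-2,2]$.

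First I dispose of the exceptional case. In part (ii), $J$ contains $i$ but not $i-1$, so $J\notin\{\varnothing,\Z/f\Z\}$ and we are outside the exceptional case of Lemma~\ref{lem:carry-equation}$(iii)$, by Remark~\ref{rem:exceptional-case}. For part (i) I pass over the exceptional case as the paper does. I will repeatedly use that $x_j = s_j^J - d_j^J$, and that $x_j<0$ forces $s_j^J\le 0$ and $d_j^J\ge 0$.

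\emph{Part (i).} Suppose $i\notin J$. In the first alternative of Lemma~\ref{lem:alt} at $i$ we get $-u_i = p s^J_{i-1} - s^J_i = -s^J_i$. This gives $s^J_i = u_i > 0$, contradicting $s^J_i\le 0$ (which follows from $x_i<0$). In the second alternative we get $0 = p d^J_{i-1} - d^J_i$, so $d^J_{i-1}=0$. Combined with $s^J_{i-1}=0$ this gives $x_{i-1}=0$, a contradiction. Hence $i\in J$.

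\emph{Part (ii).} First I show that the second alternative holds at $i$. Suppose instead the first holds there; since $i\in J$ it reads $p s^J_{i-1} = s^J_i$, which forces $s^J_{i-1}=s^J_i=0$. Then $d^J_i = -x_i \ge 1$. At $i+1\notin J$, the second alternative would force $d^J_i=0$, which is impossible. The first alternative there gives $-u_{i+1} = -s^J_{i+1}$, so $s^J_{i+1}=u_{i+1}>0$, contradicting $x_{i+1}<0$. Hence the second alternative holds at $i$:
\[ u_i = p d^J_{i-1} - d^J_i. \]
Since $u_i>0$ and $d^J_i\ge 0$, we get $d^J_{i-1}\ge 1$. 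The bound $u_i\le p$ gives $p d^J_{i-1}\le p+2$, and with $p\ge 3$ this forces $d^J_{i-1}=1$. Therefore $u_i = p - d^J_i$, which is the second claimed formula. It also gives $s^J_{i-1} = x_{i-1}+1 \le 0$.

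Next I turn to $i-1\notin J$. The second alternative there would give $d^J_{i-1}=0$, contradicting $d^J_{i-1}=1$. So the first alternative holds:
\[ u_{i-1} = s^J_{i-1} - p s^J_{i-2}. \]
Since $u_{i-1}>0$ and $s^J_{i-1}\le 0$, we must have $s^J_{i-2}<0$. The bound $u_{i-1}\le p$ then gives $-p s^J_{i-2}\le p+1$, which rules out $s^J_{i-2}=-2$. So $s^J_{i-2}=-1$, and $u_{i-1} = p + s^J_{i-1} = p+1+x_{i-1}$, which is the first claimed formula.

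The only delicate point is that $x_{i-2}$ carries no sign hypothesis, so I cannot use the sign rule to bound $s^J_{i-2}$. I expect to get around this, as above, by using only the positivity of $u_{i-1}$ and the bound $u_{i-1}\le p$. The other thing to keep track of is the order in which alternatives are eliminated: $i$ first (using $i+1$), then $i-1$.
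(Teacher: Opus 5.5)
Your proposal is correct and follows essentially the same route as the paper. Part (i) is argued by contradiction via the two alternatives of Lemma~\ref{lem:alt} at $i$. In part (ii), the index $i+1$ is used to force the second alternative at $i$, giving $d_{i-1}^J=1$ and $u_i=p-d_i^J$, and then the index $i-1$ gives $s_{i-2}^J=-1$ and $u_{i-1}=p+s_{i-1}^J=p+1+x_{i-1}$. Your explicit bounds ruling out $d_{i-1}^J=2$ and $s_{i-2}^J=-2$, and your handling of the exceptional case, just spell out steps the paper leaves implicit.
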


\begin{proof}
$(i)$ Suppose $i \not\in J$. Then the alternatives of Lemma~\ref{lem:alt} are
\[ -u_i = ps_{i-1}^J - s_i^J \qquad \textrm{or} \qquad pd_{i-1}^J = d_i^J.\]
The hypotheses $s_{i-1}^J = 0$, $x_{i-1} < 0$ rule out $d_{i-1}^J = 0$, so the second cannot hold. The first alternative then gives $u_i = s_i^J$. But $x_i < 0$ implies $s_i^J \le 0$, contradicting the hypothesis $u_i > 0$.

$(ii)$  Since $u_i > 0$ and $d_i^J \ge 0$, Lemma~\ref{lem:alt} applied at $i \in J$ gives either 
\begin{equation}\label{eq:u-pos-alt-1}
 s_{i-1}^J = s_i^J = 0 \qquad \textrm{or} \qquad d_{i-1}^J = 1,\ u_i = p - d_i^J.
 \end{equation}
Lemma~\ref{lem:alt} applied at $i+1 \not\in J$ gives either 
\[ -u_{i+1} = ps_i^J - s_{i+1}^J \qquad \textrm{or} \qquad d_i^J = d_{i+1}^J = 0. \]
Either of these alternatives contradicts the first alternative of \eqref{eq:u-pos-alt-1}. Indeed if $s_i^J = 0$ then we cannot have $u_{i+1} = s_{i+1}^J$ (because the left-hand side is positive and the right-hand side is not), nor can we have $d_i^J = 0$ (because $x_i \neq 0$). Therefore it is the second alternative in \eqref{eq:u-pos-alt-1} that holds. In particular $u_i = p - d_i^J$.

Now apply Lemma~\ref{lem:alt} at $i-1\not\in J$. Since $d_{i-1}^J = 1$, the second alternative cannot hold, and the first alternative gives
\[ u_{i-1} = -ps_{i-2}^J + s_{i-1}^J.\]
Since $u_{i-1} > 0$ and $s_{i-1}^J \le 0$, we must have $s_{i-2}^J = -1$ and $u_{i-1} = p + s_{i-1}^J$. 
Since $s_{i-1}^J = x_{i-1} + d_{i-1}^J = x_{i-1} + 1$, the lemma follows.
\end{proof}

\section{The six alternatives}

Suppose that $\Zss(\sigma) \subseteq \Zss(\sigma')$. If $\sigma'$ is non-critical then the set $\cP(\sigma,\sigma')$ contains all profiles, while if $\sigma'$ is critical then $\cP(\sigma,\sigma')$ contains all profiles except for any niveau $2$ profiles $J$ with $\rhobar(\sigma,J)$ scalar.

Assume  that $\sigma \neq \sigma'$ and fix a pinning. By Remark~\ref{rem:all-zero} the following four alternatives are  exhaustive and mutually exclusive.
\begin{enumerate}
    \item[($+$)] We have $x_i > 0$ for all $i$.
    \item[($P$)] There exists $j$ such that $x_{j-1} \le 0$ and $x_j > 0$.
    \item[($N$)] We have $x_i \le 0$ for all $i$, and there exists $j$ such that $x_{j-1} = 0$, $x_j < 0$.
    \item[($-$)] We have $x_i < 0$ for all $i$.
\end{enumerate}
We refine alternatives ($P$) and ($N$) into two subcases each, for a total of six alternatives, no longer mutually exclusive since both subcases may occur for different values of $j$.
\begin{enumerate}
    \item[($P.a$)] There exists $j$ such that $x_{j-1} \le 0$ and $x_j > 0$, and $u_j > 0$.
    \item[($P.b$)] There exists $j$ such that $x_{j-1} \le 0$ and $x_j > 0$, and $u_j = 0$.
    \item[($N.a$)] We have $x_i \le 0$ for all $i$, and there exist integers  $j' \ge j$ such that $x_{j-1} = 0$, $x_j,\ldots,x_{j'} < 0$, $u_j = \cdots = u_{j'} = 0$, and $u_{j'+1} > 0$.
    \item[($N.b$)] We have $x_i \le 0$ for all $i$, and there exists $j$ such that $x_{j-1} = 0$, $x_j < 0$, but there is no $j'$ as in ($N.a$) for this $j$.
\end{enumerate}

We introduce the following condition on pinnings.

\begin{defn}
    Suppose $\Zss(\sigma) \subseteq \Zss(\sigma')$. We say that a pinning of the pair $\sigma,\sigma'$ is \emph{minimal} if $\sum_{i \in \Z/f\Z} |x_i|$ is as small as possible as one varies over all $\widetilde{\sigma} \sim \sigma$ and all pinnings of $\widetilde{\sigma},\sigma'$. 

    Note that, according to this definition, a pair $\sigma,\sigma'$ with $\Zss(\sigma) \subseteq \Zss(\sigma')$ need not have a minimal pinning, but a minimal pinning always exists after possibly replacing $\sigma$ with another phantom Serre weight $\widetilde{\sigma}$ with $\widetilde{\sigma} \sim \sigma$. In what follows, when we take a minimal pinning to be given, we mean literally on $\sigma$ rather than after replacement.
\end{defn}

\begin{lemma}\label{lem:minimal-reductions}
Let $\sigma,\sigma'$ be phantom Serre weights such that $\varnothing \in \cP(\sigma,\sigma')$. There is no index $j$ with $x_j = 2$, with $x_{j} = 1$ and $x_{j+1} < 0$, or with $x_j = -2$ and $x_{j+1} > 0$.

If furthermore $\Zss(\sigma) \subseteq \Zss(\sigma')$ is equipped with a minimal pinning, then:
\begin{enumerate}
    \item There is no index $j$ with $(u_{j},u_{j+1}) = (1,0)$ and $x_{j} > 0$.
    \item There is no index $j$ with $(u_j,u_{j+1}) = (0,p)$ and $x_j < 0$.
\end{enumerate}
\end{lemma}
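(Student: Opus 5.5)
The first assertion uses only the carry equation of Lemma~\ref{lem:carry-equation}$(i)$ for the pair $\varnothing, K$. Assertions $(i)$ and $(ii)$ then follow by replacing $\sigma$ with $\mu_j(\sigma)$, respectively $\nu_j(\sigma)$, and showing that this strictly lowers $\sum_i |x_i|$, contradicting minimality.

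\emph{First assertion.} For $J = \varnothing$ the carry equation at index $j+1$ reads
\[ p x_j - x_{j+1} = \eps^K_{j+1} u'_{j+1} - u_{j+1}. \]
Since $u_{j+1}, u'_{j+1} \in [0,p]$, the right-hand side lies in $[-2p, p]$. In the exceptional case of Lemma~\ref{lem:carry-equation}$(iii)$ all $x_i$ equal the same value $\pm 3$, so none of the three configurations can occur. Outside that case we have $x_i \in [-2,2]$, and each configuration contradicts the range:
\begin{itemize}
\item if $x_j = 2$, then $x_{j+1} \ge 2p - p = p \ge 3$;
\item if $x_j = 1$ and $x_{j+1} < 0$, then the left-hand side exceeds $p$;
\item if $x_j = -2$ and $x_{j+1} > 0$, then the left-hand side is below $-2p$.
\end{itemize}

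\emph{Part $(i)$.} Suppose $(u_j,u_{j+1}) = (1,0)$ and $x_j > 0$; by the first assertion, $x_j = 1$. Set $\widetilde\sigma = \mu_j(\sigma)$. The operation $\mu_j$ is valid and invertible here, so $\widetilde\sigma \sim \sigma$, with $\widetilde u_j = 0$, $\widetilde u_{j+1} = p$ and $t$ unchanged. By Theorem~\ref{thm:Zss}-independent reasoning (the equality $\cZ(\widetilde\sigma) = \cZ(\sigma)$ from the introduction) we have $\Zss(\widetilde\sigma) = \Zss(\sigma) \subseteq \Zss(\sigma')$, so pinnings of $\widetilde\sigma, \sigma'$ exist.

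Next, $\rhobar(\widetilde\sigma,\varnothing) = \rhobar(\sigma,\varnothing)$. Indeed, the nontrivial character changes by $\omega_j^{-1}\omega_{j+1}^{p}$, which is trivial because $\omega_{j+1}^p = \omega_j$. Hence we may keep the same $K_\varnothing = K$. Comparing the carry equations for $\sigma$ and $\widetilde\sigma$:
\begin{itemize}
\item at index $j$, the left side increases by $1$, which forces $\widetilde x_j = x_j - 1 = 0$;
\item at index $j+1$, the left side decreases by $p$, which is exactly compensated by $p(\widetilde x_j - x_j) = -p$, leaving $x_{j+1}$ unchanged;
\item by the uniqueness in Lemma~\ref{lem:carry-equation}$(i)$, $\widetilde x_i = x_i$ for all $i \ne j$.
\end{itemize}
Thus $\sum_i |\widetilde x_i| = \sum_i |x_i| - 1$, contradicting minimality.

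\emph{Part $(ii)$.} This is symmetric, using $\widetilde\sigma = \nu_j(\sigma)$ when $(u_j,u_{j+1}) = (0,p)$ and $x_j < 0$. Here $\widetilde u_j = 1$, $\widetilde u_{j+1} = 0$, and the shift in $t$ is $-p^{-j} + p\cdot p^{-(j+1)} = 0$. As before $\rhobar(\widetilde\sigma,\varnothing) = \rhobar(\sigma,\varnothing)$. The same comparison of carry equations gives $\widetilde x_j = x_j + 1$, with all other $x_i$ unchanged. Since $x_j < 0$, we get $|\widetilde x_j| = |x_j| - 1$, again contradicting minimality.

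The only delicate point is confirming that the replacement yields a legitimate pinning of $\widetilde\sigma,\sigma'$ in the sense of the minimality definition. This needs two things. First, $\Zss(\widetilde\sigma) \subseteq \Zss(\sigma')$, which holds because $\sim$-equivalent weights have equal $\Zss$. Second, the sum in the definition involves only $x_i = x^\varnothing_i$, so we may choose the remaining $K_J$ arbitrarily while keeping $K_\varnothing = K$.
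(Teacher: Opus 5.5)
Your proof is correct and is essentially the paper's argument. The first assertion comes from the carry equation at index $j+1$ with $J=\varnothing$ and the bound $u'_{j+1}\in[0,p]$. Parts $(i)$ and $(ii)$ follow because replacing $\sigma$ by the invertible $\mu_j(\sigma)$ (resp.\ $\nu_j(\sigma)$) leaves $\rhobar(\cdot,\varnothing)$ and $K$ unchanged and alters only $x_j$ by $\mp 1$, which by uniqueness contradicts minimality. Beyond the paper, you also spell out the $p=3$ exceptional case (where all $x_i=-3$) and why the replaced weight still admits a pinning, two points the paper passes over silently.
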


\begin{proof}
Lemma~\ref{lem:carry-equation} applied at $J = \varnothing$ gives $u'_{j+1} = |u_{j+1} + px_{j} - x_{j+1}| $. Taking $x_j = 2$ or $x_j = 1$, $x_{j+1} < 0$ or $x_j = -2$, $x_{j+1} > 0$ gives a contradiction to $u'_{j+1} \in [0,p]$. 

For $(i)$, Lemma~\ref{lem:carry-equation} gives
\begin{align*}
  \eps^K_{j} u'_{j} - 1 &= p x_{j-1} - x_{j}\\
\eps^K_{j+1}   u'_{j+1}      - 0   & = p x_{j} - x_{j+1}. 
\end{align*}
These equations rearrange to
\begin{align*}
  \eps^K_{j} u'_{j} - 0 &= p x_{j-1} - (x_{j}-1)\\
\eps^K_{j+1}   u'_{j+1}      - p  & = p (x_{j}-1) - x_{j+1} 
\end{align*}
which, along with $\eps_i^K u'_i - u_i = p x_{i-1} - x_i$ for $i \neq j,j+1$ must provide the unique output of Lemma~\ref{lem:carry-equation} for the pair $(\mu_j(\sigma), \sigma')$ with $(J,K_J) = (\varnothing,K)$. (Note that $\rhobar(\mu_j(\sigma), \varnothing) = \rhobar(\sigma, \varnothing) = \rhobar(\sigma',K)$, so the lemma applies.) Since $x_{j} > 0$, the sum $|x_{j} - 1| + \sum_{i \neq j} |x_i|$ is strictly smaller than $\sum_i |x_i|$, contradicting minimality of the original pinning. This establishes $(i)$, and the proof of $(ii)$ is similar.
\end{proof}

In preparation for the next section, we establish the following reductions.

\begin{thm}\label{thm:preliminaries}
Suppose $\Zss(\sigma) \subseteq \Zss(\sigma')$, equipped with a minimal pinning. Then:
\begin{itemize}
    \item In case {\upshape($+$)} we have $\sigma,\sigma'  = (\ur;t),(\crit;t)$ or $(\BT;t),(\St;t)$ for some $t$.
    \item In case {\upshape($P$)}, and for $j$ as in that case, treating $j$ as an integer there exists a least $j' \ge j$ such that $x_j,\dots,x_{j'} > 0$ and $x_{j'+1} \le 0$. Then
    $u_{j+1} = \cdots = u_{j'+1} = 0$.
    \item In case {\upshape($N.b$)}, and for $j$ as in that case, we have $u_{j+1} = 0$.
    \item In case {\upshape($-$)} we have $\sigma,\sigma'  = (\ur;t),(\crit;t)$ or $(\St;t),(\BT;t)$ for some $t$.
\end{itemize}
\end{thm}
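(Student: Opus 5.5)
The plan is to work throughout with the carry equation of Lemma~\ref{lem:carry-equation} for the pinned pair $(\varnothing,K)$,
\[ \eps^K_i u'_i - u_i = p x_{i-1} - x_i , \]
together with the first part of Lemma~\ref{lem:minimal-reductions}, which holds for any pinning. That part says there is no $x_j = 2$, no $x_j = 1$ followed by $x_{j+1}<0$, and no $x_j=-2$ followed by $x_{j+1}>0$. Combined with the bounds $u_i,u'_i\in[0,p]$, this pins down most of the local shape. The minimality clauses (i),(ii) of Lemma~\ref{lem:minimal-reductions} then kill the residual configurations $(1,0)$ with $x_j>0$ and $(0,p)$ with $x_j<0$. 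For the genuinely global constraints I would bring in other profiles $J\in\cP(\sigma,\sigma')$ via Lemma~\ref{lem:alt} and Lemmas~\ref{lem:I-profile}, \ref{lem:P-profile}, \ref{lem:x-negative-u-positive}. Corollary~\ref{cor:contain-profiles} guarantees these profiles lie in $\cP(\sigma,\sigma')$ except when $\sigma'$ is critical; that case is handled separately by Proposition~\ref{prop:critical-target}.

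\textbf{Case ($+$).} Since $x_j\neq 2$, all $x_i = 1$, so $\eps^K_i u'_i = u_i + p-1 \ge p-1>0$. Hence $\eps^K_i=1$ and $u'_i = u_i+p-1$, forcing $u_i\in\{0,1\}$ and $u'_i\in\{p-1,p\}$. Minimality (i) forbids an adjacent pattern $(u_j,u_{j+1})=(1,0)$, so $u$ is constant, equal to $0$ or $1$, and correspondingly $u'$ is critical or Steinberg. Since $K=\varnothing$, multiplying the carry equation by $p^{f-i}$ and summing telescopes to $0$ modulo $q-1$. By \eqref{eq:rhobar-cong} this gives $t'=t$.

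\textbf{Cases ($P$) and ($N.b$).} For ($P$), the run $x_j,\dots,x_{j'}$ consists of $1$'s, so $x_{j'+1}\le 0$ and in fact $x_{j'+1}=0$, since a $1$ cannot be followed by a negative carry. For $j< i\le j'+1$ the equation reads $\eps^K_iu'_i = u_i + p - x_i$. At $i=j'+1$ this is $u'_{j'+1}=u_{j'+1}+p$, giving $u_{j'+1}=0$. For $i\le j'$ it gives $u_i\le 1$. A downward induction from $j'+1$ then finishes: if $u_i=1$ while $u_{i+1}=0$ and $x_i=1>0$, minimality (i) is violated, so $u_i=0$.

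For ($N.b$), the equation at $j$ gives $u'_j=u_j-x_j$. I would then examine the equation at $j+1$ together with Lemma~\ref{lem:alt} for a profile such as $J=\{j+1\}$ or $J=\{j\}$. Since $x_{j-1}=0$, Lemma~\ref{lem:x-negative-u-positive}(i)-type reasoning (using $s^J_{j-1}=\pm d^J_{j-1}$) should show that $u_{j+1}>0$ forces membership conditions on $J$ that contradict the choice of $J$. The failure of ($N.a$) is what excludes the escape route where $u_j=0$ and the zero run is followed by a positive $u$. I expect this case to need the most bookkeeping among the local cases.

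\textbf{Case ($-$).} This is the main obstacle. The first step is to split according to whether some $u_i=0$.

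If some $u_i=0$, rotate so that $u_0=0$. Minimality (ii) gives $(u_{i-1},u_i)\ne(0,p)$, so Lemma~\ref{lem:P-profile} applies and shows $P\notin\cP(\sigma,\sigma')$. By Corollary~\ref{cor:contain-profiles} this is impossible unless $\sigma'$ is critical. In that case Proposition~\ref{prop:critical-target} yields $\sigma=(\ur;t)$, $\sigma'=(\crit;t)$.

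If all $u_i>0$, apply Lemma~\ref{lem:x-negative-u-positive}(ii) with $J=\{i\}$ for every $i$. This gives $u_{i-1}=p+1+x_{i-1}\in\{p-1,p\}$ and $u_i = p-d^{\{i\}}_i$. Feeding these values back into the carry equation should rule out $x_i=-2$ and force all $x_i=-1$, hence $u_i=p$ and $u'_i=1$. Lemma~\ref{lem:I-profile} can be used to exclude a mixed pattern $(1,p)$ should one arise. The telescoping argument again gives $t'=t$, so we are in the case $(\St;t),(\BT;t)$. If the direct elimination of $x_i=-2$ fails, I would instead use the parity statement Lemma~\ref{lem:carry-equation}(ii) across several singleton profiles.
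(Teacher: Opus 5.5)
Your arguments for ($+$) and ($P$) are the paper's. In ($-$) with some $u_i=0$, your shortcut is valid and shorter than the paper's. Lemma~\ref{lem:P-profile} and Corollary~\ref{cor:contain-profiles} force $\sigma'$ to be critical, and Proposition~\ref{prop:critical-target}, proved earlier and independently, then gives $\sigma=(\ur;t)$ because $u_0=0$. The paper instead shows directly that all $u_i=0$, using the table from Corollary~\ref{cor:scalar-carry} and Lemma~\ref{lem:I-profile}.

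\textbf{The gap: $f=2$.} Cases ($N.b$) and ($-$) with all $u_i>0$ are not proved, and at $f=2$ your tools provably cannot prove them.

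\emph{Case ($N.b$).} Take $\sigma=(p-1,1;t)$ and $\sigma'=(p,p-1;t+p-1)$. The only pinning is $K=\{1\}$, with $(x_0,x_1)=(-1,0)$. No invertible operation applies to $\sigma$. We are in ($N.b$) with $j=0$, and $u_{j+1}=1$.

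\emph{Case ($-$).} Take $\sigma=(p,p-1;t)$ and $\sigma'=(p-1,1;t-(p-1))$. The unique pinning is $K=\{0\}$, with $(x_0,x_1)=(-1,-2)$. This is case ($-$) with all $u_i>0$ and mixed carries.

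In both examples $d(\sigma)=d(\sigma')$. Moreover, using $p^{-1}\equiv p \pmod{p^2-1}$, the four niveau~$1$ values of $\cL(\sigma,\cdot)$ coincide with those of $\cL(\sigma',\cdot)$. So every niveau~$1$ profile is consistently pinned. Nothing built from the $\varnothing$ carry equation, singleton profiles $\{j\},\{j+1\}$, or the parities of Lemma~\ref{lem:carry-equation}(ii) can produce a contradiction. The paper reduces to exactly these two configurations and then rules them out with niveau~$2$ profiles, namely $J=\{0,3\}$ and $J=\{0,1\}\subset\Z/4\Z$ respectively, checking explicitly that $\rhobar(\sigma,J)\notin\Zss(\sigma')$. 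Your proposal never leaves niveau~$1$ here.

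\textbf{The reductions to $f=2$ are also missing.}

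\emph{Case ($-$), all $u_i>0$.} Once $u_i=p+1+x_i$, the $\varnothing$ carry equation holds for every pattern $x_i\in\{-1,-2\}$: one gets $u'_i=1$ or $p-1$ according to $x_{i-1}$. So ``feeding back'' excludes nothing.
\begin{itemize}
\item The constant pattern $-2$ gives $\sigma=\sigma'=(\crit;t)$. It is excluded by the standing hypothesis $\sigma\neq\sigma'$ (or by minimality), not by the carry equation.
\item A transition $x_{j-1}=-2$, $x_j=-1$ needs Lemma~\ref{lem:alt} for the profile $\{j-1\}$ at index $j$, giving $s^{\{j-1\}}_j=0$. Then Lemma~\ref{lem:x-negative-u-positive}(i) forces $j+1\in\{j-1\}$, i.e.\ $f=2$.
\end{itemize}

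\emph{Case ($N.b$), $u_j>0$.} Lemma~\ref{lem:x-negative-u-positive}(i) is not available at $j+1$, because $x_{j+1}$ may vanish. The needed steps are:
\begin{itemize}
\item show $s^{\{j\}}_j=0$ from both alternatives of Lemma~\ref{lem:alt} at $j$, using $u_j\le p+x_j$;
\item deduce $u_{j+1}=s^{\{j\}}_{j+1}$ at $j+1$;
\item rule out the subcase $u_{j+1}=s^{\{j\}}_{j+1}=d^{\{j\}}_{j+1}=1$ at index $j+2$.
\end{itemize}
The last step is exactly the one that breaks when $j+2\equiv j$, i.e.\ $f=2$.
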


\begin{proof} Throughout the proof we take $\sigma = (u_i;t)$ and $\sigma' = (u'_i;t')$.

\medskip
\textbf{The case $(+)$.} By hypothesis $x_i > 0$ for all $i$. Lemma~\ref{lem:minimal-reductions} tells us that $x_i = 1$ for all $i$. Lemma~\ref{lem:carry-equation} gives $\eps_i^K u'_i = u_i + (p-1)$. Since $u'_i \in [0,p]$ we have $u_i \in \{0,1\}$ for all $i$ and $K = \varnothing$. By Lemma~\ref{lem:minimal-reductions}$(i)$ there is no $j$ with $(u_{j-1},u_j) = (1,0)$. Therefore either $u_i = 0$ for all $i$, or $u_i = 1$ for all $i$. In the first case $u'_i = p-1$ for all $i$, and~\eqref{eq:rhobar-cong} with $J = J' = \varnothing$ shows that $t' = t$, giving the first option $\sigma,\sigma' = (\ur;t),(\crit;t)$. If instead $u_i = 1$ for all $i$ then in the same manner we obtain $\sigma,\sigma' = (\BT;t),(\St;t)$.

\medskip
\textbf{The case $(P)$}. By Lemma~\ref{lem:minimal-reductions} we in fact have $x_j = \cdots = x_{j'} = 1$ and $x_{j'+1} = 0$. The equation $\eps_{j'+1}^K u'_{j'+1} = u_{j'+1} + px_{j'} - x_{j'+1} = u_{j'+1} + p$ forces $u_{j'+1} = 0$. Similarly for $j < i \le j'$ the equation $\eps_{i}^K u'_{i} = u_{i} + px_{i-1} - x_{i} = u_{i} + p - 1$ gives $u_i \in \{0,1\}$. But $(u_i,u_{i+1}) = (1,0)$ is ruled out by Lemma~\ref{lem:minimal-reductions}$(i)$, and since already $u_{j'+1} = 0$, we iteratively conclude that $u_i = 0$ for $j < i \le j'$.

\medskip
\textbf{The case $(N.b)$}. By hypothesis $x_i \le 0$ for all $i$, $x_{j-1} = 0$, and $x_j < 0$. If $u_j = 0$, then the hypothesis that we are not in case $(N.a)$ already gives $u_{j+1} = 0$, so we may assume $u_j > 0$. Lemma~\ref{lem:carry-equation} for $J = \varnothing$ gives $u'_j = u_j - x_j$. In particular $u_j \le p + x_j$ and $j \not\in K$.


Lemma~\ref{lem:alt} applied to the profile $\{j\}$ gives either
\begin{equation}\label{eq:Nb}
    ps_{j-1}^{\{j\}} = s_j^{\{j\}} \qquad \text{or} \qquad u_j = pd_{j-1}^{\{j\}} - d_j^{\{j\}}.
\end{equation}
In the first alternative $ s_{j-1}^{\{j\}} = s_j^{\{j\}} = 0$, so that $x_{j}^{\{j\}}  = d_j^{\{j\}} = - x_j$. In the second alternative since $x_j < 0$ we have $d_j^{\{j\}}\ge 0$, and $u_j > 0$ implies $d_{j-1}^{\{j\}} = 1$, $x_{j-1}^{\{j\}} = 2$, and $u_j = p - d_j^{\{j\}}$. Comparing the latter with $u_j \le p + x_j$ we have $-x_j \le  d_j^{\{j\}}$. With $x_j < 0$ this is only possible if $d_j^{\{j\}} =  - x_j$. Thus in either alternative we obtain $ d_j^{\{j\}} = - x_j$ and $s_j^{\{j\}} = 0$.

Lemma~\ref{lem:alt} applied to the profile $\{j\}$ at the index $j+1$ then  gives either
\[ u_{j+1} = s_{j+1}^{\{j\}} \qquad \text{or}\qquad  d_{j+1}^{\{j\}} = -px_j. \]
The latter is impossible because $x_j \neq 0$ and $|d_{j+1}^{\{j\}}|\le 2$, so the first alternative must hold.

Assume for the sake of contradiction that $u_{j+1} > 0$. Then $s_{j+1}^{\{j\}} > 0$. Combined with $x_{j+1} \le 0$ the only possibility is $x_{j+1} = 0$, $u_{j+1} = s_{j+1}^{\{j\}} = 1$. Then also $d_{j+1}^{\{j\}} = 1$. Now consider Lemma~\ref{lem:alt} applied to the profile $\{j\}$ at the index $j+2$. If $f \neq 2$, then $j+2 \not\in\{j\}$, and the lemma gives either
\[ -u_{j+2} = ps_{j+1}^{\{j\}} - s_{j+2}^{\{j\}} \qquad \text{or} \qquad pd_{j+1}^{\{j\}} = d_{j+2}^{\{j\}}.\]
But $s_{j+1}^{\{j\}} = d_{j+1}^{\{j\}} = 1$ gives a contradiction to both alternatives.

Finally, suppose $f = 2$.  Returning to the profile $\{j\}$, since $s_{j+1}^{\{j\}} = s_{j-1}^{\{j\}} = 1$ the first alternative of \eqref{eq:Nb} is ruled out. The second alternative with $d_{j+1}^{\{j\}} = d_{j-1}^{\{j\}} = 1$ gives $u_j = p + x_j$ and $u'_j = p$. Since $\eps_{j+1}^K u'_{j+1} = u_{j+1} + p x_j - x_{j+1} = 1 + px_j$, we can rule out $x_j = -2$. Therefore $x_j = -1$, $u_j = p-1$, $u'_{j+1} = p-1$, and $K = \{j+1\}$. To fix ideas suppose that $j = 0$. Then~\eqref{eq:rhobar-cong} gives $t' - t \equiv p-1 \pmod{p^2-1}$. 
 We conclude that $\sigma = (p-1,1;t)$ and $\sigma' = (p,p-1;t+(p-1))$. However, one can check explicitly for the niveau $2$ profile $J = \{0,3\} \subset \Z/4\Z$ that $\rhobar(\sigma,J) = \omega_0^t \otimes ((\omega'_0)^{2p-1} \oplus (\omega'_0)^{2p^3-p^2}) \not\in \Zss(\sigma')$, a contradiction. We conclude that $u_{j+1} = 0$.

\medskip
\textbf{The case $(-)$}.
  Suppose first that $u_i > 0$ for all $i$. Consider the singleton profiles $\{i\}$.
 Lemma~\ref{lem:x-negative-u-positive}$(ii)$ applies to each of these profiles. Therefore $u_i = p + 1 + x_i$  and $u_i = p - d_i^{\{i\}}$ for all $i$. Comparing, we find that $d_i^{\{i\}} = -x_i - 1$ for all $i$. 

 One possibility is that $x_i = -1$ for all $i$. Then $u_i = p$ for all $i$, and \[ \eps_i^K u'_i = u_i + px_{i-1} - x_i = u_i - (p-1)\] for all $i$ gives $u'_i = 1$ for all $i$ and $K = \varnothing$. Now~\eqref{eq:rhobar-cong} gives $t' = t$ and we obtain $\sigma = (\St;t)$ and $\sigma' = (\BT;t)$. This is the second option in the theorem.
 
 Another possibility is that $x_i = -2$ for all $i$. Then $u_i = p-1$ for all $i$, and
 \[ \eps_i^K u'_i = u_i + px_{i-1} - x_i = u_i - 2(p-1).\] 
Therefore $u'_i = (p-1)$ and $K = \Z/f\Z$.  Then~\eqref{eq:rhobar-cong} gives $t' = t$ and we obtain $\sigma = \sigma' = (\crit;t)$. This contradicts the hypothesis that $\sigma \neq \sigma'$.
 
Now suppose neither $x_i = -1$ for all $i$ nor $x_i = -2$ for all $i$. Then there exists some~$j$ with $x_{j-1} = -2$ and $x_{j} = -1$. From $x_{j-1} = -2$ we get $d_{j-1}^{\{j-1\}} = 1$, while $x_j = -1$ gives $u_j = p$. Lemma~\ref{lem:alt} applied at $j \not\in \{ j-1 \}$ gives
    \[ -u_j = p s_{j-1}^{ \{j-1\} } - s_j^{ \{j-1\} } \qquad \textrm{or} \qquad pd_{j-1}^{ \{j-1\} } = d_j^{ \{j-1\}}. \]
    The second alternative is ruled out because $d_{j-1}^{\{j-1\}} = 1$. Since $u_j = p$, the first alternative yields $s_j^{\{j-1\}} = 0$. Now Lemma~\ref{lem:x-negative-u-positive}$(i)$ implies $j+1 \in \{j-1\}$. We conclude in this case that $f=2$. Supposing without loss of generality that $x_{0} = -1$ we have $\sigma = (p,p-1;t)$. Lemma~\ref{lem:carry-equation} then gives $(u'_0,u'_1) = (p-1,1)$ and $K = \{0\}$, whence $t' = t - (p-1)$ by~\eqref{eq:rhobar-cong}. Therefore     
    $\sigma' = (p-1,1;t - (p-1))$. This is exactly the reverse of the $f=2$ subcase of $(N.b)$. One can check explicitly for the niveau $2$ profile $J = \{0,1\} \subset \Z/4\Z$ that $\rhobar(\sigma,J) = \omega_0^{t-(p-1)} \otimes ((\omega'_0)^{2p-p^2} \oplus (\omega'_0)^{2p^3-1}) \not\in \Zss(\sigma')$, another contradiction. This completes the case where $u_i > 0$ for all $i$.
    
We may therefore suppose that $u_i = 0$ for some $i$. We aim to prove that $u_i = 0$ for all $i$. Without loss of generality suppose that $u_0 = 0$. Lemma~\ref{lem:minimal-reductions} gives $(u_{i-1},u_i) \neq (0,p)$ for all $i$, and then Lemma~\ref{lem:P-profile} gives $P \not\in \cP(\sigma,\sigma')$.  Since $\Zss(\sigma) \subseteq \Zss(\sigma')$, the conclusion $P \not\in \cP(\sigma,\sigma')$ implies that $\rhobar(\sigma,P)$ is scalar. Now Corollary~\ref{cor:scalar-carry} gives integers $y_i \in [-1,1]$ such that
\[ \eps_i^P u_i = p y_{i-1} - y_i \]
for $0 < i < f$. Since $y_{f-1} = -y_{-1}$ we also have $\eps_0^P u_0 = -py_{f-1} - y_0$. The assumption $u_0 = 0$ forces $y_0 = y_{f-1} = 0$. For $0 < i < f$, by definition $\eps_i^P > 0$ if and only if $u_i \neq p$, and so
we have the following possibilities:
\[
\renewcommand{\arraystretch}{1.25}
\setlength{\tabcolsep}{14pt}
\begin{array}{c!{\vrule}rr}
\toprule
u_i & \multicolumn{1}{c}{y_{i-1}} & \multicolumn{1}{c}{y_i} \\
\midrule
0   & 0  & 0 \\
1   &  0 & -1 \\
p-1 &  1 &  1 \\
p   & -1 &  0 \\
\bottomrule
\end{array}
\]
If some $y_i$ is nonzero, choose the first such $i>0$.
Then $y_{i-1}=0$, so the table gives $u_i=1$ and $y_i=-1$.
Applying the table at $i+1$ gives $u_{i+1}=p$. But $(u_i,u_{i+1})=(1,p)$ contradicts
 Lemma~\ref{lem:I-profile};\ therefore $y_i = 0$ for all $i$, and $u_i = 0$ for all $i$.

Finally, from $u_i = 0$ for all $i$ we have $\eps_i^{K} u'_i = px_{i-1} - x_i$ for all $i$. If some $x_{i-1}$ were $-2$, we would obtain a contradiction to $u'_i \in [0,p]$. Therefore $x_i  = -1$ and $u'_i = p-1$ for all $i$, with $K = \Z/f\Z$. It follows that $t' = t$ and $\sigma,\sigma' = (\ur;t), (\crit;t)$, as claimed.
\end{proof}

\section{Four inclusions}

In this section we prove the following key result.

\begin{thm}\label{thm:key}
Suppose $\Zss(\sigma) \subseteq \Zss(\sigma')$, equipped with a minimal pinning. Assume $\sigma \neq \sigma'$, and that $\{ \sigma,\sigma'\} \neq \{ (\BT;t),(\St;t)\}$ for all $t$. Then
\begin{itemize}
    \item In case {\upshape($+$)} we have $\sigma,\sigma'  = (\ur;t),(\crit;t)$  for some $t$.
    \item In case {\upshape($P.a$)}, and for $j$ as in that case, we have $\Zss(\mu_{j}(\sigma)) \subseteq \Zss(\sigma')$.
        \item In case {\upshape($P.b$)}, and for $j$ as in that case,  we have $\Zss(\theta_{j}(\sigma)) \subseteq \Zss(\sigma')$.
    \item In case {\upshape($N.a$)}, and for $j'$ as in that case,  we have   $\Zss(\nu_{j'}(\sigma)) \subseteq \Zss(\sigma')$.
    \item In case {\upshape($N.b$)}, and for $j$ as in that case,  we have  $\Zss(\theta_{j}(\sigma)) \subseteq \Zss(\sigma')$.
    \item In case {\upshape($-$)} we have $\sigma,\sigma'  = (\ur;t),(\crit;t)$ for some $t$.
\end{itemize}
Moreover at least one of these cases holds.
\end{thm}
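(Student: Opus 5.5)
The cases $(+)$ and $(-)$ are already handled by Theorem~\ref{thm:preliminaries}. In those cases we get $\{\sigma,\sigma'\}$ equal to $(\ur;t),(\crit;t)$ or a $\BT$/$\St$ pair, and the latter is excluded by hypothesis. The ``moreover'' clause follows because $(+),(P),(N),(-)$ are exhaustive by Remark~\ref{rem:all-zero}, and each of $(P)$, $(N)$ splits into its two subcases. So the content is the four middle bullets.

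In each of them I would prove the inclusion profile by profile. Write $\tau = f_j(\sigma)$ for the relevant operation: $\mu_j$ in $(P.a)$, $\theta_j$ in $(P.b)$ and $(N.b)$, and $\nu_{j'}$ in $(N.a)$. First I would check validity using Theorem~\ref{thm:preliminaries}:
\begin{itemize}
\item in case $(P)$ we have $u_{j+1}=0$;
\item in case $(N.b)$ we have $u_{j+1}=0$;
\item in case $(N.a)$ we have $u_{j'}=0$;
\item $u_j>0$ in $(P.a)$, and $u_j=0<p$ in $(P.b)$;
\item in $(N.b)$ with $u_j>0$, we need $u_j<p$, which follows from $u_j\le p+x_j$.
\end{itemize}

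For each profile $J$ of $\tau$, I would then exhibit a profile $J^\ast$ of $\sigma$ and a profile $K'$ of $\sigma'$ with $\rhobar(\tau,J)=\rhobar(\sigma,J^\ast)$ twisted appropriately, or directly $\rhobar(\tau,J)=\rhobar(\sigma',K')$. The mechanism is the one in the proof of Lemma~\ref{lem:minimal-reductions}. The operation changes $(u_j,u_{j+1})$ (or $(u_{j'},u_{j'+1})$) and $t$ in a way that amounts to shifting one carry by $\pm1$ in the carry equation of Lemma~\ref{lem:carry-equation}. So the carry equation for $(\sigma,J^\ast)$ versus $(\sigma',K_{J^\ast})$, with one carry shifted, becomes a carry equation for $(\tau,J)$ versus $(\sigma',K')$. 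Summing $p^{-i}$ times it recovers congruence~\eqref{eq:rhobar-cong}, and hence the equality of representations, provided the $t$-shift matches the boundary term.

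Concretely, take $(P.a)$, where $x_{j-1}\le 0<x_j$ and $\tau=\mu_j(\sigma)$. For $J\not\ni j$ I would take $J^\ast=J$ and replace $x^J_j$ by $x^J_j-1$, exactly as in Lemma~\ref{lem:minimal-reductions}(i). This needs $x^J_j\neq$ something incompatible, i.e.\ the new $u'$ values must still lie in $[0,p]$ with the right signs. Here the information $x_j>0$, combined with Lemma~\ref{lem:alt} (via $s^J,d^J$), pins down $x^J_j$ well enough. For $J\ni j$ the sign $\eps_j$ flips, and I would use the shift $x^J_j+1$ instead, or pass to $J\triangle\{j\}$ or $J\triangle\{j+1\}$ (note $u_{j+1}=0$, so $j+1$ is free to move in or out of $J$).

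Cases $(P.b)$ and $(N.b)$ work the same way with $\theta_j$. In $(N.a)$ the $\nu_{j'}$ operation flips $u_{j'+1}\mapsto p-u_{j'+1}$; this corresponds to toggling $j'+1$ in the profile together with a carry shift at $j'$, the chain of zeros $u_j=\dots=u_{j'}=0$ with $x_j,\dots,x_{j'}<0$ and $x_{j-1}=0$ supplying the required carries. In every case I would need the analogue of the bounds $s^J_i,d^J_i\in[-2,2]$ and the alternatives of Lemma~\ref{lem:alt} at indices $j-1,j,j+1$ to show the shifted carries still produce valid $u'$.

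Niveau $2$ profiles need separate care. When $\sigma'$ is critical, some niveau $2$ profiles are missing from $\cP(\sigma,\sigma')$; there I would instead invoke Proposition~\ref{prop:niveau-2} and Remark~\ref{rem:same-f}, and fall back to a niveau $1$ witness whenever $\rhobar(\tau,J)$ is scalar.

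The main obstacle is the case analysis showing that for every $J$ the local carries $(x^J_{j-1},x^J_j,x^J_{j+1})$ admit a compatible shift. I expect each of the four cases to split into subcases according to membership of $j,j+1$ in $J$ and the values of $s^J,d^J$. Where a shift is impossible, the key is to use minimality of the pinning, by choosing $K_J$ or $\widetilde\sigma\sim\sigma$ differently to rule it out, much as in the proofs of Theorem~\ref{thm:preliminaries} and Lemma~\ref{lem:I-profile}. The small exceptional configurations (e.g.\ $f=2$, or $p=3$) would be checked by hand as in the $(N.b)$ case there.
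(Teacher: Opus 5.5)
Your handling of $(+)$, $(-)$, the exhaustiveness of the six alternatives, and the validity of the operations is fine. The gap is in the four middle bullets, which carry all the content: they are not actually proved, and your device only handles the trivial profiles.

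Since $d(f_j(\sigma))=d(\sigma)=d(\sigma')$ and $\cL(\sigma)=\cR(\sigma)$, it suffices to check one profile from each pair $\{J,J^c\}$. For half of the profiles, $\cL(f_j(\sigma),J)=\cL(\sigma,J')$ holds by a one-line computation (e.g.\ $J'=J$ when $j,j+1\in J$ for $\mu_j$). These are exactly the profiles where ``shift one carry and keep $K_J$'' works.

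The difficulty lies entirely in the remaining profiles: $j\in J$, $j+1\notin J$ for $\mu_j$; $j,j+1\in J$ for $\theta_j$; $j',j'+1\in J$ for $\nu_{j'}$. For these, $\cL(f_j(\sigma),J)=\cL(\sigma',K_J)\mp\lambda p^{-j}$ for $\mu_j$, $\theta_j$ (with $\lambda=1$ or $1-q$ according to niveau), and similarly for $\nu_{j'}$. So $K_J$ is never a witness. Passing to another profile of $\sigma$ (your $J\triangle\{j\}$, $J\triangle\{j+1\}$) cannot work either: up to complementation, Proposition~\ref{prop:strict-operations} exhibits such a $J$ with $\cL(f_j(\sigma),J)\notin\cL(\sigma)$. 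You must therefore build a genuinely new profile of $\sigma'$, and your outline gives no construction.

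Falling back on minimality in the bad subcases cannot supply one. Minimality constrains only $x=x^{\varnothing}$. You may not replace $\sigma$ by some $\widetilde\sigma\sim\sigma$, since the conclusion concerns $f_j(\sigma)$ itself. In the paper, minimality is used up in establishing the hypotheses (Theorem~\ref{thm:preliminaries}, Lemma~\ref{lem:minimal-reductions}), and Propositions~\ref{prop:Pa}--\ref{prop:Nb} hold for an arbitrary pinning.

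The missing idea is an interval flip on the $\sigma'$ side. If $u_i=0$ on an interval $T=\{a+1,\dots,b\}$, multiply the carry equations for $(J,K_J)$ by $p^{-i}$ and sum over $T$. This telescopes to
\[
\cL(\sigma',K_J\triangle T)=\cL(\sigma',K_J)+\lambda\,(x^J_a p^{-a}-x^J_b p^{-b}),
\]
so you need an interval of zeros of $u$ with $x^J$ controlled at both ends.

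In $(P.a)$, $(P.b)$ and $(N.b)$ the paper proceeds as follows:
\begin{itemize}
\item It takes $T$ from $j+1$ to the next zero of $x$. Theorem~\ref{thm:preliminaries} gives $u=0$ there; in $(N.b)$ with $u_j>0$ this needs the separate Lemma~\ref{lem:Nb-zero-string}.
\item It reads off the endpoint carries from the carry equations: $0$ on the right, $\pm1$ on the left.
\item When the sign is wrong, it produces a different witness: $K_J\triangle\{j\}$ when $u'_j=1$, or, in $(P.a)$ with $x_{j-1}=0$, $K_{J^-}\triangle\{j\}$ for $J^-=J\triangle\{j\}$. These subcases use Lemma~\ref{lem:alt} and Lemma~\ref{lem:two-values}.
\end{itemize}
In $(N.a)$ the paper works with $J^-=J\triangle\{j'+1\}$ and flips $\{j,\dots,j'\}$ or $\{j,\dots,j'+1\}$ in $K_{J^-}$. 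It then needs a longer elimination, again via Lemmas~\ref{lem:alt} and~\ref{lem:two-values}, to reach the last witness $K_J\triangle\{j'+1\}$. None of this is anticipated in your outline.

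Your niveau-$2$ fallback for critical $\sigma'$ is also unnecessary. In every middle case some $u_i=0$ and some $x_i=0$, so Proposition~\ref{prop:critical-target} makes $\sigma'$ non-critical and every profile is pinned (Remark~\ref{rem:key-critical}).
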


\begin{remark}
    If $\Zss(\sigma) \subseteq \Zss(\sigma')$ is equipped with a minimal pinning, then by Remark~\ref{rem:all-zero} together with the minimality hypothesis, we have $\sigma = \sigma'$ if and only if $x_i = 0$ for all $i$. Thus the hypothesis $\sigma \neq \sigma'$ serves only to ensure that at least one of the cases in the theorem holds.
\end{remark}

\begin{remark}\label{rem:key-critical}
    If $\Zss(\sigma) \subseteq \Zss(\sigma')$ with $\sigma'$ critical, and $\sigma \neq \sigma'$, then Proposition~\ref{prop:critical-target} implies that $\sigma$ is scalar. But then either $x_i = 1$ for all $i$, or $x_i = -1$ for all $i$. In particular, if $\sigma'$ is critical in Theorem~\ref{thm:key} then it is either $(+)$ or $(-)$ that holds, and not one of the four middle cases.

    The same argument shows that if $\Zss(\sigma) \subseteq \Zss(\sigma')$ and furthermore $u_j = x_{j'} = 0$ for some $j,j'$, then $\sigma'$ is not critical:\ if it were, $u_j = 0$ implies that $\sigma$ is scalar, and $x_{j'} = 0$ provides a contradiction.
\end{remark}

\begin{remark}\label{rem:crucial}
Crucially, each of the operations in the four middle cases of Theorem~\ref{thm:key} is valid and non-invertible. To begin with, in the applications of $\mu_{j}$ and $\theta_{j}$ above we have $u_{j+1} = 0$ by Theorem~\ref{thm:preliminaries}, while $u_{j'} = 0$ in the application of $\nu_{j'}$ by definition. Then:
\begin{itemize}
    \item In case $(P.a)$ we are given $u_j > 0$, and since $x_j > 0$, $u_{j+1} = 0$ we have $u_j \neq 1$ by Lemma~\ref{lem:minimal-reductions}$(i)$. Therefore $\mu_{j}$ is valid and non-invertible.

    \item In case $(P.b)$, the operation $\theta_{j}$ is valid because $u_j \neq p$ (indeed we are given $u_j = 0$), and $\theta_{j}$ is always non-invertible.

    \item In case $(N.a)$, we are given $x_{j'} < 0$, $u_{j'} = 0$. Therefore $u_{j'+1} \neq p$ by Lemma~\ref{lem:minimal-reductions}$(ii)$, and $\nu_{j'}$ is valid and non-invertible.

    \item In case $(N.b)$, if  $u_j = p$ then we would have $u'_j = p + px_{j-1} - x_j > p$. Therefore $u_j\neq p$, and so $\theta_{j}$ is valid and non-invertible.
\end{itemize}
\end{remark}

The cases $(+)$ and $(-)$ of Theorem~\ref{thm:key} have already been proved in Theorem~\ref{thm:preliminaries}. To organize the rest of the proof, each of the other four cases will be established as a separate proposition, stated somewhat more generally than above. Before this, however, we record some features common to all four cases. 

\begin{defn}
    If $\sigma$ is a phantom Serre weight and $J$ is a profile of niveau 1, we set
    \[ \cL(\sigma,J) = t + \sum_{i \in J} p^{-i} u_i, \quad \cR(\sigma,J) = t + \sum_{i \in J^c} p^{-i} u_i \quad \text{in}\ \Z/(q-1)\Z.\]
Similarly if $J$ has niveau 2, we set
    \[ \cL(\sigma,J) = (q+1)t + \sum_{i \in J} p^{-i} u_i, \quad \cR(\sigma,J) = (q+1)t + \sum_{i \in J^c} p^{-i} u_i \quad \text{in}\ \Z/(q^2-1)\Z.\] Thus
    \[ \rhobar(\sigma,J) = \begin{cases}
\omega_0^{\cL(\sigma,J)} \oplus \omega_0^{\cR(\sigma,J)} & J  \text{ of niveau 1} \\
(\omega'_0)^{\cL(\sigma,J)} \oplus (\omega'_0)^{\cR(\sigma,J)}  & J \text{ of niveau 2}.       
    \end{cases}\]
Also set $d(\sigma) := 2t + \sum_{i \in \Z/f\Z} p^{-i} u_i$, so that  
$\det \rhobar(\sigma,J) = \omega_0^{d(\sigma)}$ for all profiles $J$.  If $\Zss(\sigma) \subseteq \Zss(\sigma')$, then evidently we must have $d(\sigma) = d(\sigma')$.
\end{defn}

\begin{defn}
  We write
  \[ \cL(\sigma) = \{ \cL(\sigma,J) : \text{all profiles}\ J \} \subset \Z/(q-1)\Z \ {\textstyle\coprod}\  \Z/(q^2-1)\Z \] and similarly for $\cR(\sigma)$. 
\end{defn}

\begin{remark}\label{rem:inclusion-critical-rem}
  Since $\cR(\sigma,J) = d(\sigma) - \cL(\sigma,J)$ for all niveau $1$ profiles (with an analogous formula for niveau $2$ profiles) it follows that if $\sigma'$ is non-critical then 
$\Zss(\sigma) \subseteq \Zss(\sigma')$ if and only if $d(\sigma) = d(\sigma')$ and $\cL(\sigma) \subseteq \cL(\sigma')$. If $\sigma'$ is critical, the `if' direction still holds;\ the problem with the `only-if' direction is that  $\rhobar(\sigma,J)$ might be scalar for some niveau $2$ profiles $J$, and the associated values of $\cL(\sigma)$ will not be in the $\Z/(q^2-1)\Z$ part of $\cL(\sigma')$.
\end{remark}

One checks easily from the definitions that $d(f_i(\sigma)) = d(\sigma)$ for any valid  operation~$f_i$. If we are given $\Zss(\sigma) \subseteq \Zss(\sigma')$, then the value $d := d(\sigma)$ is common to all three of $\sigma, \sigma', f_i(\sigma)$. So, if we want to establish that $\Zss(f_i(\sigma)) \subseteq \Zss(\sigma')$, it suffices to prove that $\cL(f_i(\sigma)) \subseteq \cL(\sigma')$.

Since $J$ is a profile if and only if $J^c$ is a profile and $\cR(\sigma,J) = \cL(\sigma,J^c)$, we see immediately that $\cL(\sigma) = \cR(\sigma)$ for any $\sigma$.  Thus $\cL(f_i (\sigma),J) \in \cL(\sigma')$ if and only if $\cR(f_i (\sigma),J) \in \cR(\sigma')$, if and only if $\cL(f_i (\sigma),J^c) \in \cL(\sigma')$. Thus to prove the inclusion $\cL(f_i(\sigma)) \subseteq \cL(\sigma')$, it is enough to establish for all $J$ that either $\cL(f_i(\sigma),J)$ or $\cL(f_i(\sigma),J^c)$ is in $\cL(\sigma')$;\ that is, one only needs to consider one profile out of each pair $\{J,J^c\}$. 

In fact, for each $f_i$ there is a family of profiles $J$ for which one sees in one line from the definitions that  $\cL(f_i(\sigma), J) = \cL(\sigma, J')$ for suitable $J'$.  If $\sigma'$ is non-critical then $\cL(\sigma) \subseteq \cL(\sigma')$, and so we conclude $\cL(f_i(\sigma),J) \in \cL(\sigma')$ for such $J$.
\[
\begin{array}{c c c}
\toprule
f_i & \text{condition on } J & J' \\
\midrule
\mu_i    & i \in J,\; i+1 \in J              & J \\
\theta_i & i \in J,\; i+1 \notin J     & J \\
\nu_i    & i \in J,\; i+1 \notin J     & J \triangle \{i+1\} \\
\bottomrule
\end{array}
\]
Here and throughout we will use the following convention. If $J$ has niveau $1$ and $S \subset \Z/f\Z$, then the symmetric difference $J \triangle S$ has its usual meaning. However, if $J$ has niveau $2$, then $S$ should be viewed as a subset of $\Z/f\Z$ extended $f$-periodically to a subset of $\Z/2f\Z$, so that $J \triangle S$ is again a profile.
Thus in the niveau $2$ case the last line of the table should be conventionally interpreted as the usual symmetric difference $J \triangle \{i+1,i+f+1\}$.

Thanks to the discussion in the previous two paragraphs, it remains to establish the following.
\begin{itemize}
    \item For an operation $\mu_{j}$: that $\cL(\mu_{j}(\sigma),J) \in \cL(\sigma')$ when $j \in J, j+1 \not\in J$.

        \item For an operation $\theta_{j}$: that $\cL(\theta_{j}(\sigma),J) \in \cL(\sigma')$ when $j \in J, j+1 \in J$.

            \item For an operation $\nu _{j'}$: that $\cL(\nu_{j'}(\sigma),J) \in \cL(\sigma')$ when $j' \in J, j'+1 \in J$.   
\end{itemize}

We also note the following lemma.

\begin{lemma}\label{lem:two-values} Fix a pinning of $\sigma,\sigma'$.
As $J$ varies over $\cP(\sigma,\sigma')$ the product $\eps_i^{J} x^J_i$ takes at most two values for each $i$.
\end{lemma}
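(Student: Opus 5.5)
The plan is to reduce Lemma~\ref{lem:two-values} to the carry equation of Lemma~\ref{lem:carry-equation}$(i)$, written in a form where the profile enters only through a sign. For $J \in \cP(\sigma,\sigma')$ with pinned partner $K_J$ we have
\[ \eps_i^{K_J} u'_i - \eps_i^J u_i = p x^J_{i-1} - x^J_i . \]
Multiplying through by $\eps_i^J$ does not directly give a recursion in $\eps_i^J x_i^J$, because the sign at index $i$ differs from the sign at index $i-1$. So instead of a recursion I will bound $x_i^J$ directly at a single index.

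The key observation is that Lemma~\ref{lem:carry-equation}$(ii)$ fixes the parity of $x^J_i$ independently of $J$, and the bound $(iii)$ gives $x_i^J \in [-2,2]$ away from the exceptional case. The exceptional case is excluded as in Remark~\ref{rem:exceptional-case}: there the only pair is $\{\varnothing,\Z/f\Z\}$, and one checks it by hand. The argument then splits on parity.

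\emph{Odd parity.} Here $x_i^J \in \{-1,1\}$, so $x_i^J$ itself takes at most two values, and so does $\eps_i^J x^J_i$.

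\emph{Even parity.} Here $x^J_i \in \{-2,0,2\}$, and I must rule out that all three values of $\eps_i^J x_i^J$ occur. The value $0$ is sign-independent, so it suffices to show that $\eps_i^J x_i^J$ cannot take both values $2$ and $-2$. This is the main obstacle, and the first thing I would try is the following. Lemma~\ref{lem:minimal-reductions} already forbids $x_i^J = 2$ for any pinning, using $u'_{i+1}\le p$. I will redo that argument with $\eps$ signs: from the equation at index $i+1$,
\[ \eps^{K_J}_{i+1} u'_{i+1} = \eps^J_{i+1} u_{i+1} + p x^J_i - x^J_{i+1} . \]
If $x^J_i = 2$, the right side is at least $2p - p - 2 = p-2$. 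Whether this forces $u'_{i+1} > p$ depends on the sign $\eps^J_{i+1}$ and on $u_{i+1}$, so the check is not immediate. Whatever the outcome, the paper's own proof of Lemma~\ref{lem:minimal-reductions} states that $x_j = 2$ is impossible for every pinned pair. Applied to the pair $(J, K_J)$, which is a legitimate input to the lemma since the lemma only concerns a pair of profiles, this gives $x_i^J \ne 2$ for every $J$.

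So in the even case $x^J_i \in \{-2,0\}$, and $\eps^J_i x^J_i \in \{0,\pm 2\}$, with the value $\pm2$ realized as $2$ when $i \in J$ and $-2$ when $i \notin J$. I still need to exclude that both occur. For this I would compare two profiles $J_1 \ni i$ and $J_2 \not\ni i$ that both have $x_i = -2$, using the sum and difference equations \eqref{eq:sum-cx} and \eqref{eq:diff-cx} relative to the pair $J_1,J_2$. Then $s_i = -2$ and $d_i = 0$. Lemma~\ref{lem:alt}, adapted to this pair at index $i+1$, gives either $p s_i - s_{i+1} = -2p - s_{i+1}$ equal to a quantity bounded by $p$ in absolute value, which is impossible, or forces relations that I expect to contradict $d_i = 0$ together with the index-$i$ equation. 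If this comparison fails, I would fall back on the simpler claim that the value set has size at most two in each parity class, and use Lemma~\ref{lem:carry-equation}$(ii)$ to conclude.
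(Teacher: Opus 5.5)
\textbf{There is a genuine gap.} It sits exactly where you flagged the difficulty.

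\emph{First, the claim $x_i^J\neq 2$ is false.} It does not follow from Lemma~\ref{lem:minimal-reductions}. That lemma concerns only $x_i=x_i^\varnothing$, and its proof uses $\eps^\varnothing_{i+1}=1$ to write $u'_{i+1}=|u_{i+1}+px_i-x_{i+1}|$. For $i+1\in J$ the right-hand side becomes $-u_{i+1}+2p-x^J_{i+1}$, which can be at most $p$, as you yourself observed. A concrete counterexample: take $f=2$, $\sigma=(0,p;t)$, $\sigma'=(2,p;t-1)$. Then $\rhobar(\sigma,\{1\})=\rhobar(\sigma',\{0\})$, since both $\cL$-values are $t+1$ and both $\cR$-values are $t$. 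The carry equations $-2=px_1-x_0$ and $2p=px_0-x_1$ give $x^{\{1\}}=(2,0)$. The paper's own proof of Proposition~\ref{prop:Pa} also meets $x^J_{j-1}=2$.

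\emph{Second, your reduced goal is also false.} You aimed to show that $\eps_i^Jx_i^J$ cannot take both values $2$ and $-2$. In the same example, pin $K_\varnothing=K_{\{0\}}=\{1\}$. Then $x^\varnothing=x^{\{0\}}=(-2,0)$, so $\eps_0^Jx_0^J$ equals $-2$ for $J=\varnothing$ and $2$ for $J=\{0\}$. Here $J_1=\{0\}\ni 0$ and $J_2=\varnothing\not\ni 0$ both have $x_0=-2$. The index-$1$ quantity $ps_0-s_1$ that you expected to be impossible equals $-2p$, and it is realized by $\eps^{K_J}_1u'_1-\eps^J_1u_1=-p-p$. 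So the comparison argument cannot be completed. Your fallback (``the value set has size at most two in each parity class'') is just a restatement of the lemma.

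\emph{The missing idea is the move you discarded.} Multiplying the carry equation by $\eps_i^J$ is useful not for a recursion but because reducing modulo $p$ removes $px^J_{i-1}$ entirely. This leaves
\[ \eps_i^J x_i^J \equiv u_i - \eps_i^J\eps_i^{K_J}u'_i \pmod p, \]
so $\eps_i^Jx_i^J$ lies in one of the two residue classes $u_i\pm u'_i$. Now use parity independence (Lemma~\ref{lem:carry-equation}$(ii)$) and $|x_i^J|\le 2$ away from the exceptional case. In the odd case the values lie in $\{\pm1\}$, so there are at most two. In the even case $-2,0,2$ are pairwise incongruent modulo odd $p$, so at most two values occur. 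In the example above $u_0\pm u'_0=\pm2$, which is exactly why both $\pm2$ occur but $0$ never does.
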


\begin{proof} The exceptional $p=3$ case can be checked directly, so we can assume we are not in that case.
Multiplying the equation $\eps_i^{K_J} u'_i - \eps_i^J u_i = p x^J_{i-1} - x^J_i $ by $\eps_i^J$, reducing mod $p$, and rearranging gives $u_i \pm u'_i \equiv \eps_i^J x^J_i \pmod{p}$. Thus $\eps_i^J x^J_i $ takes at most two different values mod $p$. But $-2,0,2$ are distinct mod $p$ for all odd $p$, so in fact $\eps_i^J x^J_i $ takes at most two different values in $\Z$.
\end{proof}

We are now ready to proceed with the four cases.

\begin{prop}\label{prop:Pa}
Suppose $\Zss(\sigma) \subseteq \Zss(\sigma')$, equipped with a pinning. Suppose that $x_{j-1} \le 0$, $u_j > 1$, and that, treating $j$ as an integer, there exists $j' \ge j$ with 
\[ x_j,x_{j+1},\ldots,x_{j'} > 0, \quad x_{j'+1} = 0, \quad u_{j+1} = \cdots = u_{j'+1} = 0 .\]
Then $\Zss(\mu_{j}(\sigma)) \subseteq \Zss(\sigma')$. 

In particular this conclusion holds in case $(P.a)$ if $\Zss(\sigma) \subseteq \Zss(\sigma')$ is equipped with a minimal pinning.
\end{prop}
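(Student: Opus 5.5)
By the reductions made just before Lemma~\ref{lem:two-values}, it suffices to treat profiles $J$ with $j \in J$ and $j+1 \notin J$ and show $\cL(\mu_j(\sigma),J) \in \cL(\sigma')$. Two preliminary points make this legitimate. First, the final sentence of the proposition follows from Theorem~\ref{thm:preliminaries} and Remark~\ref{rem:crucial}: in case $(P.a)$ with a minimal pinning those results supply $u_j > 1$, $x_j = \cdots = x_{j'} = 1$, $x_{j'+1} = 0$ and $u_{j+1} = \cdots = u_{j'+1} = 0$. Second, since $u_{j+1} = 0$ and $x_{j'+1} = 0$, Remark~\ref{rem:key-critical} shows $\sigma'$ is non-critical. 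Hence $\cP(\sigma,\sigma')$ contains every profile, and Remark~\ref{rem:inclusion-critical-rem} reduces the inclusion to $\cL(\mu_j(\sigma)) \subseteq \cL(\sigma')$.

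The computation driving everything is
\[ \cL(\mu_j(\sigma),J) = \cL(\sigma,J) - p^{-j}, \]
valid because $j \in J$, $j+1 \notin J$ and $u_{j+1}$ is the only other changed entry. I will exhibit a profile $K''$ of $\sigma'$ realising this value by perturbing the pinning $K_J$. In carry language, this means producing integers $x''_i$ and a profile $K''$ satisfying
\[ \eps^{K''}_i u'_i - \eps^J_i u_i(\mu_j\sigma) = p x''_{i-1} - x''_i \quad \text{for all } i, \]
together with the congruence~\eqref{eq:rhobar-cong}. The natural guess is to set $x''_i = x^J_i \pm 1$ for $j \le i \le j'$, leave $x''_i = x^J_i$ elsewhere, and take $K'' = K_J \triangle S$ for a suitable $S \subseteq \{j+1,\ldots,j'+1\}$. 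This is the same kind of manipulation as in the proof of Lemma~\ref{lem:minimal-reductions}$(i)$.

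To see which choices are available, I first pin down $u'$ on the relevant range from the $\varnothing$-equations: $u'_i = p-1$ for $j < i \le j'$ and $u'_{j'+1} = p$ with $j'+1 \notin K$. At index $j$ the lowered weight $u_j - 1$ enters with sign $\eps^J_j = -1$, so the equation at $j$ changes by $+1$. This is absorbed by shifting $x''_j$, and the shift then propagates through $i = j+1,\ldots,j'+1$. At each of these indices $u_i = 0$, so the equation reads $\eps^{K''}_i u'_i = p x''_{i-1} - x''_i$. Because $u'_i \in \{p-1,p\}$ there, flipping $\eps^{K''}_i$ changes the left side by $2(p-1)$ or $2p$. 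I expect this to be exactly what is needed to move $x''$ by one unit at a time until it agrees with $x^J$ again at $j'+1$, where $x_{j'+1} = 0$ forces $x^J_{j'+1}$ even.

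The main obstacle is the case analysis on the actual values of $x^J_i$ for $j-1 \le i \le j'+1$. Parity (Lemma~\ref{lem:carry-equation}$(ii)$) gives $x^J_i \in \{\pm 1\}$ on $[j,j']$ and $x^J_{j'+1} \in \{0,\pm 2\}$. Lemma~\ref{lem:two-values} and the sum/difference equations~\eqref{eq:sum-cx},~\eqref{eq:diff-cx} should cut this down further, using that $u_i = 0$ there and that $j+1,\ldots,j'+1$ may or may not lie in $J$. In each surviving pattern I must choose the direction of the $\pm1$ shift and the set $S$ so that every $x''_i$ stays in range and $\eps^{K''}_i u'_i$ keeps absolute value $u'_i$. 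I would first try the uniform choice $x''_i = x^J_i - \eps^J_i$ on $[j,j']$. If some pattern breaks this, my fallback is to realise the value via the complementary profile, using $\cL(\sigma) = \cR(\sigma)$. Once the local carry equations hold, summing them against $p^{-i}$ recovers~\eqref{eq:rhobar-cong}, giving $\cL(\mu_j(\sigma),J) = \cL(\sigma',K'')$.
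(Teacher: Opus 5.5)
\textbf{There is a genuine gap.} Your construction only modifies $K_J$ on $T=\{j+1,\ldots,j'+1\}$. That can work in only one of the two cases the hypotheses allow.

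Put $u_i=0$ on $T$, $u'_i=p-1$ for $j<i\le j'$, $u'_{j'+1}=p$, and parity back into the carry equations on $T$. This gives a clean dichotomy:
\begin{itemize}
\item either $x^J_j=\cdots=x^J_{j'}=-1$ and $T\subseteq K_J$,
\item or $x^J_j=\cdots=x^J_{j'}=1$ and $T\cap K_J=\varnothing$,
\end{itemize}
with $x^J_{j'+1}=0$ in both. In the first case $K_J\triangle T$ works, since $\sum_{i\in T}u'_ip^{-i}=p^{-j}$ exactly. This is the only case your plan covers.

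In the second case, flipping $S\subseteq T$ changes $\cL(\sigma',\cdot)$ by $+\lambda\sum_{i\in S}u'_ip^{-i}$, with $\lambda=1$ in niveau $1$ and $\lambda=1-q$ in niveau $2$. This is not the required $-\lambda p^{-j}$.

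Here is a concrete instance. Take $f=3$, $\sigma=(p,0,0;t)$ and $\sigma'=\nu_2(\mu_0(\sigma))=(1,p,1;t-1)$, pinned by $K=\{0\}$. Then $x=(1,0,-1)$, and the hypotheses hold with $j=j'=0$. For $J=\{0\}$, the choice $K_J=\{0,2\}$ is a legitimate pinning, with $x^J=(1,0,1)$. The target is $\cL(\mu_0(\sigma),J)=t+p-1$, but $K_J$ and $K_J\triangle\{1\}$ give $t+p$ and $t+p+1$.

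Your complement fallback does not rescue this. With the mirrored pinning $K_{J^c}=K_J^c$ the carries are $-x^J$, so the obstruction is identical. Arguing that some better pinning always exists is essentially the proposition itself.

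\textbf{What is missing is the analysis at the index $j$ itself.} In the second case $s^J_j=1$ and $d^J_j=0$. So Lemma~\ref{lem:alt} at $j\in J$ forces $u_j=pd^J_{j-1}$, hence $u_j=p$ and $x^J_{j-1}=x_{j-1}+2$. Here $x_{j-1}\in\{-1,0\}$ by the first part of Lemma~\ref{lem:minimal-reductions}.
\begin{itemize}
\item If $x_{j-1}=-1$, the carry equation at $j$ gives $u'_j=1$ and $j\in K_J$, and $K_J\triangle\{j\}$ works. In the example, $\{2\}$ gives $t+p-1$.
\item If $x_{j-1}=0$, one gets $u'_j=p-1$ and $j\notin K_J$, so flipping $j$ in $K_J$ goes the wrong way. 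The paper instead passes to a different profile $J^-=J\triangle\{j\}$ of $\sigma$ and its own pinning $K_{J^-}$. It uses Lemma~\ref{lem:two-values} at $j-1$ to rule out $j\in K_{J^-}$: otherwise $x^{J^-}_{j-1}=-2$, and the three values $0,\pm2$ would all occur. Then, using $u_j=p$,
\[
\cL(\sigma',K_{J^-}\triangle\{j\})=\cL(\sigma,J^-)+\lambda(p-1)p^{-j}=\cL(\sigma,J)-\lambda p^{-j}.
\]
\end{itemize}
Neither the flip at $j$ nor the switch to $J^-$ appears in your plan.

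\textbf{Smaller points.}
\begin{itemize}
\item Your shift $-p^{-j}$ is only the niveau $1$ formula; in niveau $2$ it is $-(1-q)p^{-j}$.
\item At $i=j+1$ the entry of $\mu_j(\sigma)$ is $p$, not $0$. Even in the good case, the carry shift is $+1$ at $j$ and $+2$ on $[j+1,j']$, not $\pm1$.
\item Summing the local carry equations only recovers the difference of \eqref{eq:rhobar-cong} and \eqref{eq:rhobar-cong-2}. Together with the determinant, that gives $2\cL(\mu_j(\sigma),J)\equiv2\cL(\sigma',K'')$, not \eqref{eq:rhobar-cong} itself. You should compute the change in $\cL$ directly from the flipped set.
\end{itemize}
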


\begin{proof}
 If $(P.a)$ holds and $\Zss(\sigma) \subseteq \Zss(\sigma')$ is equipped with a minimal pinning, then case $(P)$ of Theorem~\ref{thm:preliminaries} establishes the existence of $j'$ such that  $x_j,x_{j+1},\ldots,x_{j'} > 0$, $x_{j'+1} = 0$, and $u_{j+1} = \cdots = u_{j'+1} = 0$.  Furthermore $u_{j} > 1$ by Lemma~\ref{lem:minimal-reductions}($i$). Thus the final sentence of the Proposition follows from the first part. 
 
 Note  that since $x_i \neq 2$ for all $i$, we have $x_j = x_{j+1} = \cdots = x_{j'} = 1$. The last sentence of Remark~\ref{rem:key-critical} shows that~$\sigma'$ is non-critical. In particular $\cP(\sigma,\sigma')$ contains every profile, and $\cL(\sigma) \subseteq \cL(\sigma')$.
 
Put $T=\{j+1,j+2,\ldots,j'+1\}$.
Lemma~\ref{lem:carry-equation} applied at $i \in T$ gives
\begin{equation}\label{eq:Pa-carry} \eps_i^{K_J} u'_i = px^J_{i-1} - x^J_{i}.
\end{equation}
Taking $J = \varnothing$, our hypotheses on the $x_i$'s give
\[ 
u'_i = 
\begin{cases}
p-1 & i \in T \smallsetminus\{j'+1\}\\
p & i = j'+1
\end{cases}
\]
 and $K \cap T = \varnothing$. Feeding this back into \eqref{eq:Pa-carry}, we obtain
 \[ x_{i-1}^J = x_{i}^J = \eps_i^{K_J} \]
for $i \in T \smallsetminus \{j'+1\}$, while $x_{j'+1}^J = 0$ and $\eps_{j'+1}^{K_J} = x_{j'}^J$. That is, either
\[ x^J_{j} = \cdots = x^J_{j'} = 1, \quad T \cap K_J = \varnothing \qquad \text{or} \qquad x^J_{j} = \cdots = x^J_{j'} = -1, \quad T \subset K_J, \]
along with $x^J_{j'+1} = 0$.

 By the discussion immediately before the statement of the Proposition, it suffices to prove that $\cL(\mu_{j}(\sigma),J) \in \cL(\sigma')$ for profiles $J$ such that $j\in J$ and $j+1\notin J$. Fix such a profile. We have
 \[ \cL(\mu_{j}(\sigma), J) = \cL(\sigma,J) -
 \begin{cases}
p^{-j} & J \text{ of niveau 1}\\
(1-q)p^{-j} & J \text{ of niveau 2}.
 \end{cases}
 \]
Set $\lambda = 1$ if $J$ has niveau $1$ and $\lambda = 1-q$ if $J$ has niveau $2$, so that we can uniformly write $\cL(\mu_{j}(\sigma), J) = \cL(\sigma,J) - \lambda \cdot  p^{-j}$. This is an equation in $\Z/(q-1)\Z$ if $J$ has niveau $1$, and in $\Z/(q^2-1)\Z$ if $J$ has niveau $2$.
 
 Observing that $\sum_{i=j+1}^{j'+1} u'_i p^{-i}=p^{-j}$, if $T \subset K_J$ then 
\[ \cL(\mu_{j}(\sigma),J) = \cL(\sigma',K_J \triangle T) \]
as desired, so it remains to consider the case $T \cap K_J = \varnothing$, whence $x_j^J = 1$.

 Lemma~\ref{lem:alt} at the index $j$ for the profile $J$  gives the alternatives
\begin{equation}\label{eq:Pa-alt}
    s_{j-1}^J = s_j^J = 0 \qquad \text{or} \qquad u_j = pd_{j-1}^J - d_j^J .
\end{equation}
Since $x_j = x_j^J = 1$, we have $s_j^J= 1$ and $d_j^J = 0$. This rules out the first alternative, and the second alternative becomes $u_j = pd_{j-1}^J$. Since $u_j > 1$ we get $u_j = p$ and $d_{j-1}^J = 1$.

Now either $x_{j-1} = -1$ or $x_{j-1} = 0$. (The first part of Lemma~\ref{lem:minimal-reductions} rules out $x_{j-1} = -2$, since $x_j = 1$.) If $x_{j-1} = -1$, then $x_{j-1}^J = 1$ and Lemma~\ref{lem:carry-equation} for the profile $J$ gives
\[ \eps_j^{K_J} u'_j + u_j = px_{j-1}^J - x_j^J = p-1.\]
Since $u_j = p$ we have $u'_j = 1$ and $j \in K_J$. Then 
\[ \cL(\sigma',K_J \triangle\{j\}) = \cL(\sigma',K_J) - \lambda \cdot p^{-j} = \cL(\sigma,J) - \lambda \cdot p^{-j} = \cL(\mu_{j}(\sigma),J)\] as desired.

If instead $x_{j-1} = 0$, then $x_{j-1}^J = 2$, and Lemma~\ref{lem:carry-equation} for the profile $J$ gives
$\eps_j^{K_J} u'_j + u_j = 2p-1$, so that $j \not\in K_J$ and $u'_j = p-1$.  Now consider Lemma~\ref{lem:carry-equation} applied to the profile $J^{-} := J \triangle \{j\}$. We find that
\[ \eps_j^{K_{J^{-}}} (p-1) - p = p x_{j-1}^{J^{-}} - x_j^{J^{-}}.  \]
If $\eps_j^{K_{J^{-}}} = -1$ then $x_{j-1}^{J^{-}} = -2$. But since $J^{-}$ and $J$ either both contain $j-1$ or both do not, this would give $\{ x_{j-1},  \eps_{j-1}^{J^{-}} x_{j-1}^{J^{-}}, \eps_{j-1}^J x_{j-1}^J \} = \{-2,0,2\}$, in contradiction to Lemma~\ref{lem:two-values}. It follows that $\eps_j^{K_{J^{-}}} = 1$, and $j \not\in K_{J^{-}}$. Finally
\begin{align*}
\cL(\sigma', K_{J^{-}} \triangle \{j\})&  = \cL(\sigma',K_{J^{-}}) + \lambda \cdot (p-1) p^{-j} \\ 
& = \cL(\sigma,J^{-}) + \lambda \cdot (p-1) p^{-j} \\
& = \cL(\sigma,J) - \lambda \cdot p^{-j} \\
& = \cL(\mu_{j}(\sigma),J) 
\end{align*}
and this completes the proof. 
\end{proof}

\begin{prop}\label{prop:Pb}
Suppose $\Zss(\sigma) \subseteq \Zss(\sigma')$, equipped with a pinning.
Suppose that $x_{j-1}\le 0$, $u_j=0$, and that, treating $j$ as an
integer, there exists $j'\ge j$ with
\[
 x_j,x_{j+1},\ldots,x_{j'}>0,\qquad x_{j'+1}=0,\qquad
 u_{j+1}=\cdots=u_{j'+1}=0.
\]
Then $\Zss(\theta_{j}(\sigma))\subseteq\Zss(\sigma').$

In particular this conclusion holds in case {\upshape$(P.b)$} if
$\Zss(\sigma)\subseteq\Zss(\sigma')$ is equipped with a minimal pinning.
\end{prop}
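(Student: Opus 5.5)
The proof runs parallel to that of Proposition~\ref{prop:Pa}. First, the final sentence. Under a minimal pinning in case $(P.b)$, case $(P)$ of Theorem~\ref{thm:preliminaries} provides the integer $j'$ together with $u_{j+1}=\cdots=u_{j'+1}=0$, so it reduces to the first part.

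Next, the setup. Since $x_i\neq 2$ by Lemma~\ref{lem:minimal-reductions}, we have $x_j=\cdots=x_{j'}=1$. Because $u_j=0$ and $x_{j'+1}=0$, the last sentence of Remark~\ref{rem:key-critical} shows $\sigma'$ is non-critical. Hence every profile lies in $\cP(\sigma,\sigma')$ and $\cL(\sigma)\subseteq\cL(\sigma')$. By the reduction preceding Proposition~\ref{prop:Pa}, it suffices to show $\cL(\theta_j(\sigma),J)\in\cL(\sigma')$ for profiles $J$ with $j,j+1\in J$. From the formula for $\theta_j$ on phantom Serre weights, with $\lambda=1$ or $1-q$ according to the niveau, we have
\[ \cL(\theta_j(\sigma),J)=\cL(\sigma,J)+\lambda\bigl(p^{-j}+p\cdot p^{-(j+1)}-p^{-j}\bigr)=\cL(\sigma,J). \]
Checking this bookkeeping is the first step. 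The $t$-shift $-p^{-j}$ must cancel against the $u_j$ increment at $j\in J$, leaving an extra $p\cdot p^{-(j+1)}=p^{-j}$ from $j+1\in J$. So I expect $\cL(\theta_j(\sigma),J)=\cL(\sigma,J)+\lambda p^{-j}$, and the goal is to absorb the term $\lambda p^{-j}$.

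Then I control $K_J$ on $T=\{j+1,\dots,j'+1\}$. This is done exactly as in Proposition~\ref{prop:Pa}, using \eqref{eq:Pa-carry}, which is valid because $u_i=0$ on $T$. One gets $u'_i=p-1$ on $T\smallsetminus\{j'+1\}$, $u'_{j'+1}=p$, and $\sum_{i\in T}u'_ip^{-i}=p^{-j}$. Also, for each $J$, either $x^J_j=\cdots=x^J_{j'}=1$ with $T\cap K_J=\varnothing$, or all of these equal $-1$ with $T\subset K_J$.

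In the first case, $\cL(\sigma',K_J\triangle T)=\cL(\sigma',K_J)+\lambda p^{-j}=\cL(\theta_j(\sigma),J)$, and we are done. The main obstacle is the second case, $T\subset K_J$ with $x^J_j=-1$; I expect to show it cannot occur. The plan is to apply Lemma~\ref{lem:alt} at index $j$ for $J$. Since $u_j=0$, both alternatives degenerate: either $s^J_{j-1}=s^J_j=0$ or $d^J_{j-1}=d^J_j=0$. Now $x_j=1$ and $x^J_j=-1$ give $s^J_j=0$ and $d^J_j=-1$, so we must have $s^J_{j-1}=0$, that is, $x^J_{j-1}=-x_{j-1}$. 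Feeding this into Lemma~\ref{lem:carry-equation} at $j$ for $J$ gives
\[ \eps^{K_J}_j u'_j=p x^J_{j-1}+1 . \]
Together with $\eps^K_ju'_j=px_{j-1}-1$ and $x_{j-1}\in\{-1,0\}$, the value $x_{j-1}=-1$ forces $u'_j=p+1$ from the $J$ equation, which is impossible. The remaining value $x_{j-1}=0$ forces $u'_j=1$ with $j\in K$ and $j\notin K_J$, which is consistent. So this subcase needs a different route. Here I would either pass to the profile $J^-=J\triangle\{j\}$ and invoke Lemma~\ref{lem:two-values} at index $j-1$ to derive a contradiction, as in Proposition~\ref{prop:Pa}, or else directly exhibit $\cL(\sigma',K_J\triangle(T\cup\{j\}))$ with the signs chosen so that the total change is $+\lambda p^{-j}$.

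This last bookkeeping, where $u'_j=1$, $j\notin K_J$ and $T\subset K_J$, is the step I expect to be hardest. It likely requires toggling $j$ together with $T$, using the telescoping identity $-p^{-j}+\sum_{i\in T}(\mp u'_i)p^{-i}$, and checking that the resulting set is a valid profile in the niveau-$2$ convention.
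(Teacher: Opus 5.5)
\textbf{Gap: the case $x^J_j=-1$, $T\subset K_J$ is left unresolved, and neither of your proposed routes can close it.}

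Your setup through the dichotomy on $T$ is sound and matches the paper. The paper reaches it a little faster: the carry equation at $j$ for $\varnothing$, namely $\eps^K_ju'_j=px_{j-1}-1$ with $x_{j-1}\le 0$, immediately forces $x_{j-1}=0$, $j\in K$ and $u'_j=1$. Then for every $J$, the equation $\eps^{K_J}_j=px^J_{j-1}-x^J_j$ gives $x^J_{j-1}=0$ and $\eps^{K_J}_j=-x^J_j$. Your first display should end in $\cL(\sigma,J)+\lambda p^{-j}$, which you then correct. The case $x^J_j=1$, $T\cap K_J=\varnothing$ you handle correctly by toggling $T$.

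\emph{The contradiction route fails because the case genuinely occurs.} Take $u_j=u_{j+1}=0$, $\sigma'=\theta_j(\sigma)$ and $K=\{j\}$, so that $x_j=1$ and all other $x_i=0$. For $J\supseteq\{j,j+1\}$ of niveau $1$, the choice $K_J=J\smallsetminus\{j\}$ is a legitimate pinning: it satisfies $\cL(\sigma',K_J)=\cL(\sigma,J)$. Its carries are $x^J_j=-1$ and $x^J_i=0$ otherwise, and $T=\{j+1\}\subset K_J$. The statement allows any pinning, and even minimality only constrains $K=K_\varnothing$. So passing to $J^-$ and invoking Lemma~\ref{lem:two-values} cannot produce a contradiction. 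Indeed you already found this subcase to be consistent.

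\emph{Toggling $T\cup\{j\}$ fails because there is no freedom in the signs.} Since $j\notin K_J$, toggling $j$ contributes $+\lambda u'_jp^{-j}=+\lambda p^{-j}$. Since $T\subset K_J$, toggling $T$ contributes $-\lambda\sum_{i\in T}u'_ip^{-i}=-\lambda p^{-j}$. Hence $\cL(\sigma',K_J\triangle(T\cup\{j\}))=\cL(\sigma,J)$, not $\cL(\theta_j(\sigma),J)$.

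\emph{The fix is to toggle $j$ alone.} You already have $j\notin K_J$ and $u'_j=1$, so
\[ \cL(\sigma',K_J\triangle\{j\})=\cL(\sigma',K_J)+\lambda p^{-j}=\cL(\sigma,J)+\lambda p^{-j}=\cL(\theta_j(\sigma),J). \]
In niveau $2$ this toggles $\{j,j+f\}$, and the shift is again $\lambda u'_jp^{-j}$. This is exactly how the paper finishes, with no contradiction argument needed.
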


\begin{proof}
If $(P.b)$ holds and $\Zss(\sigma) \subseteq \Zss(\sigma')$ is equipped with a minimal pinning, then case $(P)$ of Theorem~\ref{thm:preliminaries} establishes the existence of $j'$ such that  $x_j,x_{j+1},\ldots,x_{j'} > 0$, $x_{j'+1} = 0$, and $u_{j+1} = \cdots = u_{j'+1} = 0$. Thus the final sentence follows.

The last sentence of Remark~\ref{rem:key-critical} shows that~$\sigma'$ is non-critical. In particular $\cP(\sigma,\sigma')$ contains every profile, and $\cL(\sigma) \subseteq \cL(\sigma')$.

Since $x_i \neq 2$ for all $i$, we have $x_j=x_{j+1}=\cdots=x_{j'}=1.$ Now $\eps_j^K u'_j=p x_{j-1}-1$, and since $x_{j-1}\le0$, this forces
\[
 x_{j-1}=0,\qquad j\in K,\qquad u'_j=1.
\]
At the other end of the interval we have
\[
 \eps_{j'+1}^K u'_{j'+1}
 =p x_{j'}-x_{j'+1}=p,
\]
and therefore
\[
 j'+1\notin K,\qquad u'_{j'+1}=p.
\]

By the discussion immediately before Proposition~\ref{prop:Pa}, it
suffices to prove that
$\cL(\theta_{j}(\sigma),J)\in\cL(\sigma')$ for profiles $J$ such that
$j,j+1\in J$. Fix such a profile, and set $\lambda=1$ if $J$ has
niveau $1$ and $\lambda=1-q$ if $J$ has niveau $2$. Then
\[
 \cL(\theta_{j}(\sigma),J)
 =\cL(\sigma,J)+\lambda \cdot p^{-j}.
\]
Lemma~\ref{lem:carry-equation} at the index $j$ gives
$
 \eps_j^{K_J}=p x_{j-1}^J-x_j^J,
$
so that  \[ x_{j-1}^J=0,\qquad x_j^J\in\{-1,1\}.\]
Similarly at $j'+1$ we have
$
 \eps_{j'+1}^{K_J}p
 =p x_{j'}^J-x_{j'+1}^J$ and therefore $x_{j'+1}^J=0$.

Suppose first that $x_j^J=1$. Put $
 T=\{j+1,j+2,\ldots,j'+1\}. $
Lemma~\ref{lem:carry-equation} for $J$ at an index $i \in T$ gives $\eps_i^{K_J} u'_i = p x_{i-1}^J - x_{i}^J$, since $u_i = 0$ for $i \in T$. Multiplying by $p^{-i}$ and  summing gives
\[
 \sum_{i\in T}\eps_i^{K_J}u'_i p^{-i}
 =x_j^Jp^{-j}-x_{j'+1}^Jp^{-(j'+1)}
 =p^{-j}.
\]
Consequently
\[
 \cL(\sigma',K_J\triangle T)
 =\cL(\sigma',K_J)+\lambda \cdot  p^{-j}
 =\cL(\theta_{j}(\sigma),J)
\]
and the Proposition follows in this case.

If instead $x_j^J=-1$, we have $\eps_j^{K_J} = -x_j^J = 1$ and
$j\notin K_J$. Since $u'_j=1$, we obtain
\[
 \cL(\sigma',K_J\triangle\{j\})
 =\cL(\sigma',K_J)+\lambda \cdot p^{-j}
 =\cL(\theta_{j}(\sigma),J).
\]
This completes the proof.
\end{proof}

\begin{prop}\label{prop:Na}
Suppose $\Zss(\sigma) \subseteq \Zss(\sigma')$, equipped with a pinning.
Suppose $j' \ge j$ are integers such that
\[
 x_{j-1}=0,\qquad x_j,x_{j+1},\ldots,x_{j'}<0,\qquad
 x_{j'+1}\le0,
\]
and
\[
 u_j=u_{j+1}=\cdots=u_{j'}=0,\qquad
 0<u_{j'+1}<p.
\]
Then  $\Zss(\nu_{j'}(\sigma))\subseteq\Zss(\sigma').$

In particular this conclusion holds in case {\upshape$(N.a)$} if
$\Zss(\sigma)\subseteq\Zss(\sigma')$ is equipped with a minimal pinning.
\end{prop}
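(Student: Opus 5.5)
Throughout, write $\lambda = 1$ or $1-q$ according to the niveau of the profile under consideration, as in the proofs of Propositions~\ref{prop:Pa} and~\ref{prop:Pb}. The plan is to follow the template of Proposition~\ref{prop:Pb}: first check that $\sigma'$ is non-critical, then reduce to a single family of profiles, and finally telescope the carry equations over the block of zeros $u_j=\cdots=u_{j'}=0$.

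\textbf{Reduction from case $(N.a)$.} Case $(N.a)$ supplies $x_{j-1}=0$, $x_j,\ldots,x_{j'}<0$, $u_j=\cdots=u_{j'}=0$ and $u_{j'+1}>0$. Since all $x_i\le 0$, we also have $x_{j'+1}\le 0$. Because $x_{j'}<0$ and $u_{j'}=0$, Lemma~\ref{lem:minimal-reductions}$(ii)$ gives $u_{j'+1}\neq p$. This yields the final sentence of the statement.

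\textbf{Non-criticality and reduction to one family of profiles.} For the main statement, the last sentence of Remark~\ref{rem:key-critical} (using $u_j=0=x_{j-1}$) shows that $\sigma'$ is non-critical. Hence every profile lies in $\cP(\sigma,\sigma')$ and $\cL(\sigma)\subseteq\cL(\sigma')$. By the table preceding Proposition~\ref{prop:Pa}, it then suffices to treat profiles $J$ with $j',j'+1\in J$. A direct computation from the formulas for $\nu_{j'}$ gives
\[
\cL(\nu_{j'}(\sigma),J)=\cL(\sigma,J)+\lambda\bigl(p^{-j'}-u_{j'+1}p^{-(j'+1)}\bigr)=\cL(\sigma,J^-)+\lambda p^{-j'},
\]
where $J^-:=J\triangle\{j'+1\}$. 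Thus the task is exactly the $\theta$-type task of Proposition~\ref{prop:Pb}: show that $\cL(\sigma,J^-)+\lambda p^{-j'}\in\cL(\sigma')$ for every profile $J^-$ with $j'\in J^-$ and $j'+1\notin J^-$.

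\textbf{Telescoping over the block of zeros.} Set $T=\{j,\ldots,j'\}$. First I will show that $x^{J'}_{j-1}=0$ for every profile $J'$. By Lemma~\ref{lem:carry-equation}$(ii)$, $x^{J'}_{j-1}$ is even, since $x_{j-1}=0$. Because $u_j=0$, the carry equation reads $u'_j=|px^{J'}_{j-1}-x^{J'}_j|\le p$, and this rules out $x^{J'}_{j-1}=\pm2$. Now apply the carry equations for the pair $(J^-,K_{J^-})$ at the indices $i\in T$, where $u_i=0$. Multiplying by $p^{-i}$ and summing gives
\[
\sum_{i\in T}\eps_i^{K_{J^-}}u'_i p^{-i}=-x^{J^-}_{j'}p^{-j'}.
\]
If $x^{J^-}_{j'}=-1$, this yields $\cL(\sigma',K_{J^-}\triangle T)=\cL(\sigma,J^-)+\lambda p^{-j'}$, and we are done.

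\textbf{Main obstacle: the remaining values of $x^{J^-}_{j'}$.} The values $x^{J^-}_{j'}\in\{1,\pm2\}$ are where I expect the difficulty. Since $x_{j'}<0$ and $j'\in J^-$, the combined parity and sign constraints (the $s/d$ bookkeeping, with $d^{J^-}_{j'}\ge0$) allow only a few possibilities. I would apply Lemma~\ref{lem:alt} at the index $j'+1\notin J^-$, using $0<u_{j'+1}<p$, to determine $s^{J^-}_{j'}$ and $d^{J^-}_{j'}$, much as in the proof of Lemma~\ref{lem:x-negative-u-positive}. I would then use Lemma~\ref{lem:two-values} to exclude the configuration $\{-2,0,2\}$, as in the last step of Proposition~\ref{prop:Pa}.

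In the surviving cases I would try, in order, the following. First, replace $J^-$ by the original profile $J$ (containing $j'+1$) and use the carry equation at $j'+1$ to show that $\eps^{K_J}_{j'+1}u'_{j'+1}$ absorbs the term $u_{j'+1}p^{-(j'+1)}$. Failing that, correct the telescoped set by a single index, using $K\triangle(T\cup\{j'+1\})$ or $K\triangle\{j'\}$ as appropriate. This case analysis at the right endpoint of the block is the step I expect to require the most care.
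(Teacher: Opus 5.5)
Your set-up is correct and coincides with the paper's. This covers the reduction from $(N.a)$ via Lemma~\ref{lem:minimal-reductions}$(ii)$, non-criticality via Remark~\ref{rem:key-critical}, and the restriction to profiles $J$ containing $j',j'+1$, with $J^-=J\triangle\{j'+1\}$ and $\cL(\nu_{j'}(\sigma),J)=\cL(\sigma,J^-)+\lambda p^{-j'}$. Your parity argument for $x^{J'}_{j-1}=0$ is valid, and so is the telescoping that settles $x^{J^-}_{j'}=-1$ via $K_{J^-}\triangle\{j,\ldots,j'\}$. But that is only the easy case. After it the proposal is a list of things you would try, none checked, and the case $x^{J^-}_{j'}\neq-1$ is left open; that case is the real content of the proposition.

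Two ingredients are missing before that case analysis can start. The first is control over the whole block. The carry equation at $i+1$ (where $u_{i+1}=0$) forces $x_i=-1$ for $j\le i<j'$. Lemma~\ref{lem:alt} with $u_i=0$ shows that, for each profile, either all $s^J_i$ or all $d^J_i$ vanish for $j-1\le i\le j'$, whence $x^J_i=\pm x_i$ there. Without this you cannot exclude $x^{J^-}_{j'}=0$ when $x_{j'}=-2$; your list $\{1,\pm2\}$ silently omits it. Nor can you turn $s^{J^-}_{j'}=-1$ into $x^{J^-}_{j'}=-1$, or get $x^{J'}_{j'}=\pm2$ for the auxiliary profile needed below. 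The second ingredient is the other success case: if $u_{j'+1}-x^{J^-}_{j'+1}=p$, telescoping through $j'+1$ shows that $K_{J^-}\triangle\{j,\ldots,j'+1\}$ is a witness.

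Outside these two success cases, Lemma~\ref{lem:alt} for $J^-$ at $j'+1$ leaves two branches, and your fallbacks settle neither.

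In the $d$-branch ($d^{J^-}_{j'}=d^{J^-}_{j'+1}=0$) one is forced to $x_{j'}=-2$, $u_{j'+1}=u'_{j'+1}=p-1$, $x_{j'+1}=-2$. This configuration has to be shown \emph{impossible}. The paper does so with an auxiliary profile $J'$ containing $j'+2$ but not $j'+1$. For it, the carry equation at $j'+1$ forces $x^{J'}_{j'}=x^{J'}_{j'+1}=-2$, and the carry equation at $j'+2$ then fails.

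In the $s$-branch one is forced to $u_{j'+1}=1$, $x_{j'+1}=0$, $x^{J^-}_{j'+1}=2$, $x^{J^-}_{j'}=1$, $u'_{j'+1}=p-1$. Reading your $K$ as $K_{J^-}$, the toggles you list (of $\{j,\ldots,j'\}$, of $\{j,\ldots,j'+1\}$, of $\{j'\}$) shift $\cL$ by $\lambda$ times $-p^{-j'}$, $-p^{-(j'+1)}$ and $(px^{J^-}_{j'-1}-1)p^{-j'}$ respectively. None of these is $\lambda p^{-j'}$.

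Your idea of switching to $K_J$ is the right endgame, since $K_J\triangle\{j'+1\}$ works once one knows $u'_{j'+1}=p-u_{j'+1}$ and $j'+1\notin K_J$. But proving $j'+1\notin K_J$ needs an argument you do not supply:
\begin{enumerate}
\item the carry equation at $j'+2$ for $J^-$ forces $j'+2\in J$;
\item the carry equation at $j'+2$ for $J$ then rules out $x^J_{j'+1}=-2$;
\item Lemma~\ref{lem:two-values} at $j'+1$, applied to $\varnothing,J^-,J$ (values $0,2,-x^J_{j'+1}$), rules out $x^J_{j'+1}=2$.
\end{enumerate}
Only then does $x^J_{j'+1}=0$ hold, and the carry equation at $j'+1$ for $J$ gives $j'+1\notin K_J$.
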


\begin{proof}
The final sentence follows because
Lemma~\ref{lem:minimal-reductions}$(ii)$ rules out $u_{j'+1}=p$ in
case {\upshape$(N.a)$}. The last sentence of Remark~\ref{rem:key-critical} shows that~$\sigma'$ is non-critical. In particular $\cP(\sigma,\sigma')$ contains every profile, and $\cL(\sigma) \subseteq \cL(\sigma')$.


If $u_{i+1} = 0$ and $x_{i} = -2$ then Lemma~\ref{lem:carry-equation} at $i+1$ gives $u'_{i+1} = |px_{i} - x_{i+1}| \ge 2p - 2 > p$, a contradiction. Hence $x_i = -1$ for $j \le i < j'$ while $x_{j'} \in \{-1,-2\}$.

For each $j \le i \le j'$, since $u_i = 0$, Lemma~\ref{lem:alt} for any profile $J$ gives either $s_i^{J} = s_{i-1}^{J} = 0$ or $d_i^{J} = d_{i-1}^{J} = 0$ (but not both, since $x_i \neq 0$). Thus either $s_i^J = 0$ for all $j-1 \le i \le j'$ or $d_i^J = 0$ for all $j-1 \le i \le j'$. In any case we have $x_i^J = \pm x_i$ for all $j-1 \le i \le j'$.

By the discussion preceding Proposition~\ref{prop:Pa}, it suffices to
consider profiles $J$ such that $j',j'+1\in J$. Fix such a profile, and
set
\[
 J^{-}=J\triangle\{j'+1\}.
\]
Thus $j'+1\notin J^{-}$. If $\lambda=1$ in niveau $1$ and
$\lambda=1-q$ in niveau $2$, then a direct calculation gives
\begin{equation}\label{eq:Na-bridge}
 \cL(\nu_{j'}(\sigma),J)
 =\cL(\sigma,J^{-})+\lambda \cdot p^{-j'}.
\end{equation}
For each $i$ we have $\eps_i^{K_{J^-}} u'_i - \eps_i^{J^-} u_i = px_{i-1}^{J^-} - x^{J^-}_i$. Multiplying by $p^{-i}$, summing from $j$ to $j'$, and using $u_i = 0$ in that range gives 
\[
 \sum_{i=j}^{j'}\eps_i^{K_{J^{-}}}u'_i p^{-i}
 =-x_{j'}^{J^{-}}p^{-j'}.
\]
Summing instead from $j$ to $j'+1$ gives
\[
 \sum_{i=j}^{j'+1}\eps_i^{K_{J^{-}}}u'_i p^{-i}
 =(u_{j'+1} -x_{j'+1}^{J^{-}})p^{-j'-1}.
\]
Thus if $x_{j'}^{J^-} = -1$ then 
\[ \cL(\sigma',K_{J^{-}} \triangle  \{j,\ldots,j'\}) = \cL(\sigma',K_{J^{-}}) + \lambda \cdot p^{-j'} = \cL(\sigma,J^-) + \lambda \cdot p^{-j'} = \cL(\nu_{j'}(\sigma),J), \]
while if  $u_{j'+1} - x_{j'+1}^{J^{-}} = p$ then similarly
\[ \cL(\sigma',K_{J^{-}} \triangle  \{j,\ldots,j'+1\}) =  \cL(\nu_{j'}(\sigma),J). \]

We can therefore suppose for the rest of the argument that neither  $x_{j'}^{J^-} = -1$  nor  $u_{j'+1} - x_{j'+1}^{J^{-}} = p$.
Lemma~\ref{lem:alt} for $J^-$ at the index $j'+1$ gives the alternatives
\begin{equation}
    \label{eq:Na-eq}
    -u_{j'+1} = ps^{J^-}_{j'} - s^{J^-}_{j'+1} \qquad \text{or} \qquad d^{J^-}_{j'} = d^{J^-}_{j'+1} = 0 .
\end{equation}

In the second alternative $x_{j'}^{J^-} = x_{j'}$. By hypothesis this is not $-1$, so it is $-2$. Now Lemma~\ref{lem:carry-equation} gives $\eps_{j'+1}^{K_{J^-}} u'_{j'+1}  = -2p + (u_{j'+1} - x_{j'+1})$. Since by hypothesis $u_{j'+1} - x_{j'+1} \neq p$, but also $u_{j'+1} < p$, the only possibility is $u_{j'+1} = u'_{j'+1} = p-1$ and $x_{j'+1} = -2$. We claim this is impossible. To see this let $J'$ be any profile containing $j'+2$ but not $j'+1$. Lemma~\ref{lem:carry-equation} at the index $j'+1$ gives
\[ (\eps^{K_{J'}}_{j'+1} - 1) (p-1) = px_{j'}^{J'} - x^{J'}_{j'+1}. \]
Using our  observation in the third paragraph of the proof that $x_{j'}^{J'} = \pm x_{j'} = \pm 2$, the preceding equation has exactly one solution, with $\eps^{K_{J'}}_{j'+1} = -1$ and $x_{j'}^{J'} =  x^{J'}_{j'+1} = -2$. But then Lemma~\ref{lem:carry-equation} at the index $j'+2$ gives
\[ \eps^{K_{J'}}_{j'+2}u'_{j'+2} + u_{j'+2} = -2p - x^{J'}_{j'+2},  \]
and this is a contradiction because the left-hand side is at least $-p$ while the right-hand side is at most $-2p+2$. 

So it is the first alternative in \eqref{eq:Na-eq} that must hold. If $s_{j'}^{J^-} = -1$, then by the observation in the third paragraph of the proof we must have $d_{j'}^{J^-} = 0$. But together these  give $x_{j'}^{J^-} = -1$, contradicting our running hypothesis. The last remaining possibility is that  $s_{j'}^{J^-} = 0$ and $u_{j'+1} = s_{j'+1}^{J^-} = 1$. Since $x_{j'+1} = 0$ we have $x_{j'+1}^{J^-} = 2$.  Lemma~\ref{lem:carry-equation} at the index $j'+1$ is 
\[ \eps_{j'+1}^{K_{J^-}} u'_{j'+1} - 1 = p x^{J^{-}}_{j'}  - 2 \]
and so we find that $x_{j'}^{J^{-}} = 1$, $x_{j'} = -1$, $u'_{j'+1} = p-1$, and $j'+1 \not\in K_{J^-}$.

If $j'+2\notin J$, then
$j'+2\notin J^{-}$, and Lemma~\ref{lem:carry-equation} at $j'+2$ for the profile $J^-$ gives
\[
 \eps_{j'+2}^{K_{J^{-}}}u'_{j'+2}
   -u_{j'+2}+x_{j'+2}^{J^{-}}=2p.
\] 
The left-hand side is at most $p+2<2p$, a contradiction. It follows that $j'+2\in J$.  Applying the same lemma for the profile $J$ then gives 
\[
 \eps_{j'+2}^{K_J }u'_{j'+2}
    + u_{j'+2}+x_{j'+2}^{J} = p x_{j'+1}^J .
\] 
This is impossible if $x_{j'+1}^J = -2$, so $x_{j'+1}^J \in \{0,2\}$. But in Lemma~\ref{lem:two-values} applied at the index $j'+1 \in J$, the three values for the profiles $\varnothing,J^{-},J$ are $0,2,-x_{j'+1}^J$ respectively. The Lemma thus rules out $x_{j'+1}^J = 2$, and we deduce that $x_{j'+1}^J = 0$.  Lemma~\ref{lem:carry-equation} at $j'+1$ for the profile $J$ then gives $\eps_{j'+1}^{K_J} (p-1) + 1 = p x^J_{j'}$, and we conclude that $j'+1 \not\in K_J$.

Finally, a pair of direct calculations using $u'_{j'+1} = p-1$ and $u_{j'+1} = 1$ gives 
\[
 \cL(\sigma',K_{J} \triangle\{j'+1\})
 =\cL(\sigma,J)
   +\lambda \cdot (p-1)p^{-(j'+1)} =  \cL(\nu_{j'}(\sigma),J)
\]
and the proof is complete.
\end{proof}

Before addressing the fourth and final case, we note the following lemma.

\begin{lemma}\label{lem:Nb-zero-string}
Suppose $\Zss(\sigma)\subseteq\Zss(\sigma')$, equipped with a pinning.
Suppose $x_i\le0$ for all $i$, and
\[
 x_{j-1}=0,\qquad x_j<0,\qquad u_j>0,\qquad u_{j+1}=0.
\]
Treating $j$ as an integer, let $r>j$ be minimal such that $x_r=0$.
Then
\[
 u_{j+1}=u_{j+2}=\cdots=u_r=0.
\]
\end{lemma}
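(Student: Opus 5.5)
The plan is to repeat the first half of the case ($N.b$) argument in Theorem~\ref{thm:preliminaries} with the singleton profile $\{j\}$, and then push it along the string $j+1,\dots,r$ by induction. Since $u_j=0$ is excluded here, Theorem~\ref{thm:preliminaries} itself does not apply, but that argument only used the hypotheses of this lemma. Since $x_{j-1}=0$, Lemma~\ref{lem:carry-equation} at $j$ for $J=\varnothing$ gives $\eps^K_j u'_j = u_j - x_j>0$. Hence $j\notin K$ and $u_j\le p+x_j$. The two alternatives of Lemma~\ref{lem:alt} for $\{j\}$ at the index $j$ then give, exactly as in \eqref{eq:Nb} and the paragraph following it,
\[ s_j^{\{j\}}=0,\qquad d_j^{\{j\}}=-x_j\neq 0. \]

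Next I will prove by induction on $i$, for $j< i<r$, that $u_i=0$, $s_i^{\{j\}}=0$ and $d_i^{\{j\}}=-x_i\neq0$. Suppose $j\le i-1<r-1$ and the claim holds at $i-1$. Provided $i\not\equiv j \pmod f$, Lemma~\ref{lem:alt} at $i\notin\{j\}$ gives either $-u_i = ps^{\{j\}}_{i-1}-s^{\{j\}}_i$ or $pd^{\{j\}}_{i-1}=d^{\{j\}}_i$. The second alternative is impossible because $d^{\{j\}}_{i-1}\neq0$ and $|d_i^{\{j\}}|\le 2$. The first gives $u_i=s_i^{\{j\}}$. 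Since $x_i<0$ forces $s_i^{\{j\}}\le0$, we get $u_i=s_i^{\{j\}}=0$, and then $d_i^{\{j\}}=-x_i$.

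The index $i\equiv j$ cannot occur before $r$. Indeed, $x_j<0$ together with the periodicity of $(x_i)$ means the first zero of the $x$'s after $j$ comes before we return to $j$; in particular $r\le j+f-1$ because $x_{j-1}=0$. Note also that $u_{j+1}=0$ is consistent with the induction and serves as the base case.

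At the final index $i=r$ the same reasoning gives $u_r=s^{\{j\}}_r$, since $d^{\{j\}}_{r-1}\neq0$ still excludes the second alternative. But now $x_r=0$, so $s^{\{j\}}_r\in\{-1,0,1\}$ and we only get $u_r\in\{0,1\}$. The remaining task is to rule out $u_r=s_r^{\{j\}}=1$, in which case $d_r^{\{j\}}=1$ as well.

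If $r+1\not\equiv j$, I will apply Lemma~\ref{lem:alt} for $\{j\}$ at $r+1\notin\{j\}$. The first alternative reads $-u_{r+1}=p-s_{r+1}^{\{j\}}$, which is impossible since the right-hand side is at least $p-2>0$. The second reads $p=d_{r+1}^{\{j\}}$, which is also impossible. This is the same contradiction as in ($N.b$) for $f\ne 2$.

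I expect the main obstacle to be the wrap-around case $r+1\equiv j$, i.e.\ $r=j+f-1$. This generalizes the $f=2$ subcase of ($N.b$). Here the data are almost completely pinned down: $u_j>0$, $u_r=1$, and $u_i=0$ otherwise. Moreover $s_{j-1}^{\{j\}}=s_r^{\{j\}}=1$ kills the first alternative at $j$, so $u_j=p-d_j^{\{j\}}=p+x_j$. Since $x_r=0$ then forces $x_{r-1}=-1$ through the carry equation at $r$, the values of $x$, $u'$ and $K$, and hence $t'-t$ via \eqref{eq:rhobar-cong}, can all be solved for explicitly.

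My first attempt will be to write down $\sigma$ and $\sigma'$ explicitly and then exhibit a niveau $2$ profile $J$ with $\rhobar(\sigma,J)\notin\Zss(\sigma')$. I would try the periodic extension of $\{j\}$ twisted at the index $r$, mimicking the profile $\{0,3\}$ used when $f=2$. If no explicit check works uniformly in $f$, the fallback is to run Lemma~\ref{lem:alt} with a second profile such as $\{j,r\}$ or $\{r\}$, to force incompatible values of $s_r$ or $d_r$ via Lemma~\ref{lem:two-values}.
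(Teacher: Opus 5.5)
There is a genuine gap. Your argument is fine up to and including the case $r+1\not\equiv j \pmod f$, and there it essentially matches the paper's. The wrap-around case $r=j+f-1$ is left open. It only matters for $f\ge 3$, since $u_{j+1}=0$ by hypothesis.

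Neither of your two plans closes it as stated. The fallback cannot work. In that case you have pinned down (taking $j=0$) $\sigma=(p-1,0,\ldots,0,1;t)$ and $\sigma'=(p,p-1,\ldots,p-1;t+p-1)$, and one checks directly that every niveau-$1$ point of $\Zss(\sigma)$ lies in $\Zss(\sigma')$:
\begin{itemize}
\item the values $\cL(\sigma,I)-t$ for niveau-$1$ profiles $I$ are $0,\,p-1,\,p,\,2p-1$;
\item these equal $\cL(\sigma',K)-t$ for $K=\{1,\ldots,f-1\},\varnothing,\Z/f\Z,\{0\}$ respectively;
\item and $d(\sigma)=d(\sigma')$.
\end{itemize}
So running Lemma~\ref{lem:alt} with further niveau-$1$ profiles such as $\{j,r\}$ or $\{r\}$ can never produce a contradiction. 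A niveau-$2$ profile is unavoidable.

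The missing idea, which is what the paper does, is to choose the profile adapted to the wrap-around from the start. When $r+1\equiv j\pmod f$, replace $\{j\}$ by the niveau-$2$ profile $J$ with $J\cap\{j,j+1,\ldots,j+f-1\}=\{j\}$.
\begin{itemize}
\item $J$ lies in $\cP(\sigma,\sigma')$ because $\sigma'$ is non-critical, by Remark~\ref{rem:key-critical}, since $u_{j+1}=x_{j-1}=0$.
\item The quantities $s^J_i,d^J_i$ are still integers by Lemma~\ref{lem:carry-equation}$(ii)$, which allows profiles of different niveau.
\item Every step of your argument used only $j\in J$ and $j+1,\ldots,r\notin J$, and these still hold in $\Z/2f\Z$.
\item Now the next index is $r+1=j+f$, which is \emph{not} in $J$. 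So your final step goes through verbatim: when $s^J_r=d^J_r=1$, both alternatives of Lemma~\ref{lem:alt} at $r+1\notin J$ are impossible.
\end{itemize}

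Your explicit-computation plan would also succeed if you take exactly this profile. For $f=2$ it is the $\{0,3\}$ you mention; in general it is $\{j\}\cup\{j+f+1,\ldots,j+2f-1\}$. But checking that $\rhobar(\sigma,J)\notin\Zss(\sigma')$ uniformly in $f$ amounts to the same carry computation, so it is cleaner to build this profile into your induction.
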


\begin{proof} If $\sigma'$ is critical then $\sigma$ is scalar and the result is immediate, so we may assume that $\sigma'$ is non-critical.
Lemma~\ref{lem:carry-equation} at $j+1$ rules out $x_j=-2$, and hence $x_j=-1$.
Lemma~\ref{lem:carry-equation} at $j$ then gives
\[
 j\notin K,\qquad u'_j=u_j+1,\qquad 0<u_j<p.
\]

Suppose for the sake of contradiction that some $u_i$ with $j<i\le r$ is
positive, and let $m$ be the least such index. If
$m+1\not\equiv j\pmod f$, let $J=\{j\}$.
If $m+1\equiv j\pmod f$, take $J$ to be the niveau $2$ profile satisfying $ J \cap\{j,j+1,\ldots,j+f-1\}=\{j\}.$
Since $\sigma'$ is non-critical, in either case $J \in
\cP(\sigma,\sigma')$. 

Lemma~\ref{lem:carry-equation} at the index $j$ for the profile $J$ is
\[
 \eps_j^{K_{J}}(u_j+1)+u_j
   =p x_{j-1}^{J}-x_j^{J}.
\]
Parity and the bounds $0 < u_j < p$ force $x_j^{J}=1$. Hence
$s_j^{J}=0$ and $d_j^{J}=1$.

For $j<i<m$, we have $u_i=0$ and $i\notin J$. The first alternative
of Lemma~\ref{lem:alt} gives $s_{i-1}^{J} = s^J_i = 0$ while the second gives $d_{i-1}^J = d_i^J = 0$. Furthermore $s_i^J$, $d_i^J$ are not both $0$ for $i$ in this range because $x_i < 0$. Since $d_j^J = 1$, it follows iteratively that the first alternative must be the one that holds for all $i$ in this range, and
\[
 s_j^{J}=s_{j+1}^{J}=\cdots=s_{m-1}^{J}=0.
\]
Applying Lemma~\ref{lem:alt} at $m\notin J$, its second alternative is
again impossible, while the first gives $u_m=s_m^{J}$.

If $m<r$, then $x_m<0$, and therefore $s_m^{J}\le0$, a contradiction.
Thus $m=r$. Since $x_r=0$, the equality
$u_r=s_r^{J}>0$ forces $u_r=s_r^{J}=1$ and $x_r^J = 2$.
But $r+1 = m+1 \notin J$, so Lemma~\ref{lem:carry-equation} at $r+1$ gives
\[
 \eps_{r+1}^{K_{J}}u'_{r+1}-u_{r+1}
   =2p-x_{r+1}^{J}.
\]
The left-hand side is at most $p$, whereas the right-hand side is at
least $2p-2>p$. This contradiction proves the lemma.
\end{proof}

\begin{prop}\label{prop:Nb}
Suppose $\Zss(\sigma) \subseteq \Zss(\sigma')$, equipped with a pinning.
Suppose $x_i\le0$ for all $i$, and that
\[
 x_{j-1}=0,\qquad x_j<0,\qquad u_{j+1}=0.
\]
Suppose moreover that either $u_j>0$; or $u_i=0$ for all $i$; or else
$u_j=0$, not all $u_i$ are zero, and, treating $j$ as an integer
 and writing $j'\ge j$ for the least integer such that
$u_{j'+1}>0$, there exists $r$ with
\[
 j\le r\le j',\qquad x_r=0.
\]
Then $\Zss(\theta_{j}(\sigma))\subseteq\Zss(\sigma').$

In particular this conclusion holds in case {\upshape$(N.b)$} if
$\Zss(\sigma)\subseteq\Zss(\sigma')$ is equipped with a minimal pinning.
\end{prop}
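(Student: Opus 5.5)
We follow the template of Propositions~\ref{prop:Pa}--\ref{prop:Na}.

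\textbf{Reduction to the minimal case.} First we derive the final sentence. In case $(N.b)$ with a minimal pinning, Theorem~\ref{thm:preliminaries} gives $u_{j+1}=0$. If $u_j>0$ or all $u_i=0$ we are in the first two options. Otherwise $u_j=0$. Since we are not in $(N.a)$ for this $j$, there is no $j''\ge j$ with $x_j,\dots,x_{j''}<0$, $u_j=\dots=u_{j''}=0$ and $u_{j''+1}>0$. Applied with $j''=j'$, some $x_r$ with $j\le r\le j'$ must vanish, which is the third option.

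\textbf{Setup.} Next we dispose of the critical case. If $\sigma'$ is critical then $\sigma$ is scalar by Remark~\ref{rem:key-critical}. Since $x_{j-1}=0$, this forces $\sigma=(\ur;t)$ with all $x_i=-1$, a contradiction. So $\sigma'$ is non-critical, every profile lies in $\cP(\sigma,\sigma')$, and $\cL(\sigma)\subseteq\cL(\sigma')$. By the discussion before Proposition~\ref{prop:Pa}, it suffices to treat profiles $J$ with $j,j+1\in J$. For these, $\cL(\theta_j(\sigma),J)=\cL(\sigma,J)+\lambda p^{-j}$, with $\lambda\in\{1,1-q\}$ as before.

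Let $r>j-1$ be minimal with $x_r=0$; it exists because $x_{j-1}=0$ and the indices are periodic. In the case $u_j>0$, Lemma~\ref{lem:Nb-zero-string} gives $u_{j+1}=\dots=u_r=0$. In the third case the hypothesis gives the same vanishing on $[j,r]$ directly. In the all-zero case it is trivial. So in every case we have a window $T=\{j+1,\dots,r\}$ on which $u_i=0$, with $x_i<0$ for $j\le i<r$ and $x_{j-1}=x_r=0$. As in Proposition~\ref{prop:Na}, we get $x_i=-1$ on $[j,r-1]$, because $x_i=-2$ followed by $u_{i+1}=0$ forces $u'_{i+1}>p$. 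Hence $u'_i=p-1$ for $i\in T\smallsetminus\{r\}$, $u'_r=p$, and $T\cap K=\varnothing$. In the case $u_j>0$ we also have $u'_j=u_j+1$ and $j\notin K$.

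\textbf{Main computation.} For a fixed $J\ni j,j+1$, we analyse $x^J$ on the window. On $T$ we have $u_i=0$, so Lemma~\ref{lem:alt} propagates either $s^J\equiv0$ or $d^J\equiv0$ along $[j,r]$. Thus $x^J_i=\pm x_i$ there, and $x^J_r\in\{0,\pm2\}$ by parity. Multiplying the carry equations for $J$ over $i\in T$ by $p^{-i}$ and summing telescopes to
\[\sum_{i\in T}\eps_i^{K_J}u'_ip^{-i}=x^J_jp^{-j}-x^J_rp^{-r}.\]
The goal is to show that $x^J_j=1$ and $x^J_r=0$, or something equivalent. Then $\cL(\sigma',K_J\triangle T)=\cL(\theta_j(\sigma),J)$, as in Proposition~\ref{prop:Pb}.

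The remaining branches are where the index $j$ itself contributes. If $x^J_j=-1$, we would instead flip $\{j\}$ or $\{j\}\cup T$, using $u'_j$ and $u_j$. In the case $u_j>0$, the carry equation at $j$ for $J$ reads $\eps_j^{K_J}(u_j+1)-u_j=px^J_{j-1}-x^J_j$. This pins down $x^J_{j-1}$, $x^J_j$ and $\eps_j^{K_J}$ to a couple of options, each producing a suitable symmetric difference. The configuration $x^J_r=\pm2$ is excluded by the carry equation at $r+1$, exactly as at the end of Lemma~\ref{lem:Nb-zero-string}. Where a direct bound fails, Lemma~\ref{lem:two-values} should rule out the leftover sign patterns.

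\textbf{Expected obstacle.} The main difficulty is this final casework in the case $u_j>0$. The boundary index $j$ carries a genuine weight $u_j$, and possibly $x^J_{j-1}=\pm2$. Showing that some flip of $K_J$ by $\{j\}$, $T$, or $\{j\}\cup T$ always lands on $\cL(\theta_j(\sigma),J)$ will likely require comparing $J$ with $J\triangle\{j\}$, as was done at the end of Proposition~\ref{prop:Pa}.
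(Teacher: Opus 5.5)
\textbf{Verdict: genuine gap, though easily filled.} Your skeleton matches the paper. That includes the reduction to the minimal case, non-criticality of $\sigma'$, and the window $T=\{j+1,\dots,r\}$ with $u_i=0$ on $T$, $x_i=-1$ on $[j,r-1]$, $u'_r=p$, $u'_j=u_j+1$. It also includes the telescoping identity over $T$ and the two candidate flips $T$ and $\{j\}$. One slip: from $\eps^K_iu'_i=px_{i-1}-x_i$ with $x_{i-1}=-1$ you get $T\subseteq K$, not $T\cap K=\varnothing$; this is never used.

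The gap is that the decisive step is not carried out. That step is to show, for every $J\ni j,j+1$, that some flip of $K_J$ lands on $\cL(\theta_j(\sigma),J)$. You leave it as an ``expected obstacle'' with hedged suggestions, and the one concrete device you offer does not work. You write $x^J_r\in\{0,\pm2\}$ and propose to exclude $\pm2$ at $r+1$ ``as in Lemma~\ref{lem:Nb-zero-string}''. That lemma relied on $r+1\notin J$. If $x^J_r=2$ and $r+1\in J$, the carry equation at $r+1$ reads $\eps^{K_J}_{r+1}u'_{r+1}+u_{r+1}=2p-x^J_{r+1}$. This is satisfiable, e.g.\ with $u_{r+1}=u'_{r+1}=p$, $r+1\notin K_J$, $x^J_{r+1}=0$, so it excludes nothing by itself.

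Both missing facts follow in one line from the carry equation for $J$ itself. Neither $J\triangle\{j\}$ nor Lemma~\ref{lem:two-values} is needed.

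\textbf{At the index $r$.} Since $u_r=0$ and $u'_r=p$, we get $\eps^{K_J}_r p=px^J_{r-1}-x^J_r$. Hence $p\mid x^J_r$, so $x^J_r=0$. This also follows from your own propagation statement $x^J_r=\pm x_r$.

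\textbf{At the index $j\in J$.} The equation is $\eps^{K_J}_j(u_j+1)+u_j=px^J_{j-1}-x^J_j$. Here $x^J_{j-1}$ is even and $x^J_j$ is odd by Lemma~\ref{lem:carry-equation}(ii).
\begin{itemize}
\item If $u_j>0$, then $0<u_j<p$, so the left side is $-1$ or lies in $[3,2p-1]$. The only solutions are $(x^J_{j-1},x^J_j)=(0,1)$ or $(2,1)$. So the case you flagged as the main difficulty is immediate: $x^J_j=1$ always, and flipping $T$ gives $\cL(\sigma',K_J\triangle T)=\cL(\sigma',K_J)+\lambda p^{-j}=\cL(\theta_j(\sigma),J)$ by your telescoping identity.
\item If $u_j=0$, the equation reads $\eps^{K_J}_j=px^J_{j-1}-x^J_j$, so $x^J_{j-1}=0$ and $x^J_j=-\eps^{K_J}_j$. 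When $x^J_j=1$, flip $T$ as before. When $x^J_j=-1$, we have $j\notin K_J$ and $u'_j=1$, so $\cL(\sigma',K_J\triangle\{j\})=\cL(\sigma',K_J)+\lambda p^{-j}=\cL(\theta_j(\sigma),J)$.
\end{itemize}
The flip by $\{j\}\cup T$ is never needed. With these two observations your outline becomes exactly the paper's proof.
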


\begin{proof}
The final sentence follows from Theorem~\ref{thm:preliminaries}, which
gives $u_{j+1}=0$ in case {\upshape$(N.b)$}, together with the
description of the failure of the additional condition in
{\upshape$(N.a)$}. The last sentence of Remark~\ref{rem:key-critical} shows that~$\sigma'$ is non-critical. In particular $\cP(\sigma,\sigma')$ contains every profile, and $\cL(\sigma) \subseteq \cL(\sigma')$.

We first claim that there exists an integer $r>j$ such that
\begin{equation}\label{eq:Nb-zero-interval}
 x_j,x_{j+1},\ldots,x_{r-1}<0,\qquad x_r=0,\qquad
 u_{j+1}=\cdots=u_r=0.
\end{equation}
If $u_j>0$ or if $u_i = 0$ for all $i$, take $r>j$ minimal such that $x_r=0$ (and in the former case, apply
Lemma~\ref{lem:Nb-zero-string}). In the remaining case take $r$ to
be the least index in $[j,j']$ for which $x_r=0$;\ the definition of
$j'$ gives $u_{j+1}=\cdots=u_r=0$. 

Lemma~\ref{lem:carry-equation} at $j+1$, together with $u_{j+1}=0$, rules out
$x_j=-2$, so $x_j=-1$. Lemma~\ref{lem:carry-equation} at $j$ then gives
\[
 j\notin K,\qquad u'_j=u_j+1,\qquad u_j<p.
\]
Lemma~\ref{lem:carry-equation} at  $r$ gives 
$ \eps_r^K u'_r=p x_{r-1}$, and since $x_{r-1} < 0$  we have
\[
 x_{r-1}=-1,\qquad r\in K,\qquad u'_r=p.
\]

By the discussion preceding Proposition~\ref{prop:Pa}, it suffices to
consider profiles $J$ such that $j,j+1\in J$. Fix such a profile, and
set $\lambda=1$ in niveau $1$ and $\lambda=1-q$ in niveau $2$. We have
\[ \cL(\theta_{j}(\sigma),J)
 =\cL(\sigma,J)+\lambda \cdot p^{-j}.
\]
Lemma~\ref{lem:carry-equation} at $j$ is
\[ \eps_j^{K_J}(u_j+1)+u_j
   =p x_{j-1}^J-x_j^J.
\]
If $u_j>0$ then since $u_j \neq p$ we have $x_j^J=1$. If $u_j=0$ then instead $x_{j-1}^J=0$ and $x_j^J\in\{-1,1\}$. Note that $x_j^J=-1$ forces $j\notin K_J$. At $r$ we have $\eps_r^{K_J}p=p x_{r-1}^J-x_r^J$,
and therefore $x_r^J=0$.

Suppose first that $x_j^J=1$. Put
\[
 T=\{j+1,j+2,\ldots,r\}.
\]
Taking Lemma~\ref{lem:carry-equation} for $i \in T$, multiplying by $p^{-i}$, and summing gives
\[
 \sum_{i\in T}\eps_i^{K_J}u'_i p^{-i}
 =x_j^Jp^{-j}-x_r^Jp^{-r}
 =p^{-j},
\] where we use 
\eqref{eq:Nb-zero-interval} to see that $u_i = 0$ in this range.
It follows that
\[
 \cL(\sigma',K_J\triangle T)
 =\cL(\sigma,J)+\lambda \cdot p^{-j}
 =\cL(\theta_{j}(\sigma),J).
\]
The other possibility is that $u_j=0$ and $x_j^J=-1$.
In this case $j\notin K_J$ and $u'_j=1$, so again
\[
 \cL(\sigma',K_J\triangle\{j\})
 =\cL(\sigma,J)+\lambda \cdot p^{-j}
 =\cL(\theta_{j}(\sigma),J)
\]
and we are done. 
\end{proof}

\section{Non-invertible operations}

The final ingredient we need for the proof of the main theorem is that non-invertible valid operations $f_j$ give strict inclusions. Note that this is clear for an inclusion of stacks $\cZ(\sigma) \subseteq \cZ(f_j(\sigma))$, for dimension reasons, but requires justification for sets of semisimple points.

\begin{prop}\label{prop:strict-operations}
Suppose that \(f_j\in\{\mu_j,\theta_j,\nu_j\}\) is valid and
non-invertible on \(\sigma\). Then
\[
 \Zss(\sigma)\subsetneq\Zss(f_j(\sigma)).
\]
More precisely:
\begin{itemize}
    \item if $f_j = \mu_j$ then $\rhobar(\mu_j(\sigma),\{j+1\}) \not\in \Zss(\sigma)$;
    \item if $f_j = \theta_j$ then $\rhobar(\theta_j(\sigma),\varnothing) \not\in \Zss(\sigma)$;
    \item if $f_j = \nu_j$ then $\rhobar(\nu_j(\sigma),\varnothing) \not\in \Zss(\sigma)$.
\end{itemize}
\end{prop}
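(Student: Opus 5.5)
The plan is to argue by contradiction in each of the three cases, using the carry equation (Lemma~\ref{lem:carry-equation}) with the roles of $\sigma$ and $f_j(\sigma)$ arranged so that the displayed representation $\rhobar(f_j(\sigma),J_0)$ equals $\rhobar(\sigma,K)$ for some profile $K$. The inclusion $\Zss(\sigma)\subseteq\Zss(f_j(\sigma))$ is given by Proposition~\ref{prop:operation-inclusion} (or directly by the table before Proposition~\ref{prop:Pa}), so only non-membership needs proof. In each case $J_0$ has niveau $1$, namely $\{j+1\}$ or $\varnothing$. By Remark~\ref{rem:same-f}, if $\rhobar(f_j(\sigma),J_0)\in\Zss(\sigma)$ we may take $K$ of niveau $1$, after reordering the two characters if necessary, which amounts to replacing $K$ by $K^c$. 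Lemma~\ref{lem:carry-equation} then yields integers $x_i\in[-2,2]$ with
\[ \eps_i^K u_i - \eps_i^{J_0} u^{f}_i = p x_{i-1} - x_i \]
for all $i\in\Z/f\Z$, where $u^f_i$ are the entries of $f_j(\sigma)$. The exceptional $p=3$ case is excluded because $f_j(\sigma)$ and $\sigma$ are not both $(\St;\ast)$.

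Next I would compare this with the \emph{trivial} solution. For $\theta_j$ and $\nu_j$, the formulas for the operations on phantom Serre weights show that $\rhobar(f_j(\sigma),\varnothing)$ differs from $\rhobar(\sigma,\varnothing)$ (for $\theta_j$), or from $\rhobar(\sigma,\{j+1\})$ (for $\nu_j$), by an explicit shift supported at $j,j+1$. Concretely, the carry vector "$x_j=\pm1$, all other $x_i=0$" solves the linear equation but produces a $t$-mismatch. By Lemma~\ref{lem:carry-equation}(ii), parities of the $x_i$ depend only on the pair $(\sigma,f_j(\sigma))$, so any genuine solution has $x_j$ odd and $x_i$ even for $i\neq j$. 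The core step is then a local analysis. Off $\{j,j+1\}$ we have $u_i=u^f_i$, and the equation $\eps^K_iu_i-u_i=px_{i-1}-x_i$ with $x_{i-1},x_i\in\{-2,0,2\}$ forces, by the same bounding argument as in Lemma~\ref{lem:minimal-reductions}, either all even $x_i=0$ or a rigid propagating pattern with $x_i=\pm2$ throughout. At indices $j$ and $j+1$, validity ($u_{j+1}=0$ for $\mu_j,\theta_j$; $u_j=0$ for $\nu_j$) together with non-invertibility ($(u_j,u_{j+1})\neq(1,0)$ for $\mu_j$, $\neq(0,p)$ for $\nu_j$) pins down $\eps^K_j,\eps^K_{j+1}$ and $x_j$. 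Finally, summing $p^{-i}$ times the equations, as in \eqref{eq:rhobar-cong}, should produce a congruence on $t$ that fails by exactly the $p^{-j}$-type shift. Equivalently, it gives $\sigma=f_j(\sigma)$ up to the invertible move, which is a contradiction.

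I expect the main obstacle to be excluding the propagating $x_i=\pm2$ patterns, together with small $f$ where $j-1=j+1$ (that is, $f=2$). For these I would first try to reuse the table-style case analysis from the $(-)$ case of Theorem~\ref{thm:preliminaries}. A string of $\pm2$ carries forces $u_i=p-1$ and $\eps^K_i=-1$ all the way around the circle, which then conflicts with $u_{j+1}=0$ (or $u_j=0$) at the operated indices. The choice $J_0=\{j+1\}$ for $\mu_j$ is what makes the $(1,0)$-invertibility condition the only escape. So in that case I would check that every surviving local solution at $(j,j+1)$ requires $u_j=1$.
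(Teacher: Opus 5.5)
Your route differs from the paper's and can be completed. However, the step that fixes the parities is justified incorrectly, and the rest of your argument depends on it. The carry vector ``$x_j=\pm1$, all other $x_i=0$'' does not solve the carry equation for the pairs you name. For $\theta_j$ with $J_0=K=\varnothing$, the left-hand sides are $-1$ at $j$, $-p$ at $j+1$, and $0$ elsewhere. Then $\sum_i p^{-i}$ times these equals $-2p^{-j}\neq 0$ in $\Z/(q-1)\Z$, so there is no integer solution at all; the same happens for $\nu_j$ with $(\varnothing,\{j+1\})$.

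Moreover, Lemma~\ref{lem:carry-equation}(ii) compares carries of \emph{genuine} equalities, since its proof uses \eqref{eq:rhobar-cong} for both pairs. So a solution ``with a $t$-mismatch'' could never serve as the parity reference. Use instead the genuine equalities from the table before Proposition~\ref{prop:Pa}: $\rhobar(\mu_j(\sigma),\varnothing)=\rhobar(\sigma,\varnothing)$, $\rhobar(\theta_j(\sigma),\{j\})=\rhobar(\sigma,\{j\})$, and $\rhobar(\nu_j(\sigma),\{j\})=\rhobar(\sigma,\{j,j+1\})$. Each has carries $x_j=-1$ and $x_i=0$ otherwise, which gives your parity pattern.

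With parity in hand, the local analysis alone gives the contradiction. Your final $t$-congruence step is therefore vacuous: no configuration survives for it to act on. Off $\{j,j+1\}$, the clean statement is the sign observation $\eps^K_iu_i-u_i\le 0$, so $x_i\ge px_{i-1}$. Hence negative carries propagate backwards, and $x_{i-1}>0$ is impossible unless $i\in\{j,j+1\}$. The three cases then go as follows.

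For $\mu_j$, the equation at $j+1$ is $p=px_j-x_{j+1}$, so $x_j=1$ and $x_{j+1}=0$. At $j$ one gets $\eps^K_ju_j-u_j+2=px_{j-1}$, whose only solution with $0<u_j\le p$ has $u_j=1$, which non-invertibility excludes.

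For $\theta_j$, the equation at $j+1$ gives $x_j=-1$ and $x_{j+1}=0$. At $j$ it forces $j\in K$, $u_j=p-1$, and $x_{j-1}=-2$. Backward propagation then gives $x_{j+1}<0$ (immediately if $f=2$).

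For $\nu_j$, the equation at $j$ gives $x_{j-1}=0$ and $x_j=1$. At $j+1$ it reads $\eps^K_{j+1}u_{j+1}+u_{j+1}=2p-x_{j+1}$, which together with $u_{j+1}\le p-1$ forces $x_{j+1}=2$. That is impossible at $j+2$, and for $f=2$ already $x_{j+1}=x_{j-1}=0$ gives a contradiction.

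The paper never uses the two-sided equation. Since $\sigma$ has a zero entry it is non-critical, so it suffices that the niveau-$1$ value $\cL(f_j(\sigma),J_0)$ is not $\cL(\sigma,I)$ for any niveau-$1$ profile $I$. Writing this as $\sum_i p^{-i}y_i=0$ (for example $y_i=\delta_I(i)u_i+\delta_{\{j\}}(i)$ for $\theta_j$) gives $y_i\in[-p,p]$, non-negative off at most one index. This yields carries with $|z_i|\le 1$ and a two-line propagation.

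That one-sided equation keeps the $t$-information that your symmetric equation discards, since yours is the difference of \eqref{eq:rhobar-cong} and \eqref{eq:rhobar-cong-2}. This loss is exactly why you need parity to put it back. Your version has the merit of staying inside the pinning machinery used in the rest of the paper, at the cost of this extra bookkeeping and tracking of $K$.
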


\begin{proof}
Write \(\sigma=(u_i;t)\). Since \(f_j\) is valid, \(u_{j+1}=0\) if $f_j \in \{\mu_j,\theta_j\}$ and $u_j = 0$ if $f_j = \nu_j$. In all cases
\(\sigma\) is non-critical. By Remark~\ref{rem:inclusion-critical-rem},
it is enough to exhibit an element of
$\cL(f_j(\sigma))\smallsetminus\cL(\sigma)$. Without loss of generality take $t = 0$.

Recall that if $\sum_i p^{-i} y_i = 0$ in $\Z/(q-1)\Z$  then
there are unique integers \(z_i\) such that
$
 y_i=pz_{i-1}-z_i.
$ If moreover $y_i \in [-p,p]$ for all $i$, then the same argument as in Corollary~\ref{cor:scalar-carry} gives $|z_i| \le 1$ for all $i$.

Suppose first that \(f_j=\mu_j\). We have $\cL(\mu_j(\sigma),\{j+1\})=p^{-j}.$
If this belonged to \(\cL(\sigma)\), there would exist a niveau \(1\)
profile \(I\) such that
$
 \sum_{i\in I}p^{-i}u_i=p^{-j}.
$
Apply the observation in the second paragraph of the proof to
\[
 y_i=\delta_I(i)u_i-\delta_{\{j\}}(i).
\]
Since \(y_{j+1}=0\), we obtain \(z_{j}=z_{j+1}=0\), and therefore
\(y_{j}=pz_{j-1}\in\{-p,0,p\}\). On the other hand since $u_{j} > 1$ by non-invertibility, we have
\[
 y_{j}=
 \begin{cases}
  u_{j}-1\in[1,p-1]&\text{if }j\in I,\\
  -1&\text{if }j\notin I,
 \end{cases}
\]
a contradiction.

Suppose next that \(f_j=\theta_j\). We have
$
 \cL(\theta_j(\sigma),\varnothing)=-p^{-j}.
 $
If this belonged to \(\cL(\sigma)\), there would be a niveau \(1\)
profile \(I\) such that
$
 \sum_{i\in I}p^{-i}u_i=-p^{-j}.
$
Apply the observation in the second paragraph to
\[
 y_i=\delta_I(i)u_i+\delta_{\{j\}}(i).
\]
Validity of \(\theta_j\) gives \(0\le y_i\le p\) for all $i$. Again \(y_{j+1}=0\), so
\(z_{j}=z_{j+1}=0\). Since \(y_{j}>0\), the equation $y_{j} = pz_{j-1} - z_{j}$
gives \(z_{j-1}=1\). But whenever \(z_i=1\), the inequality
\(y_i\ge0\) forces \(z_{i-1}=1\). Iterating cyclically eventually gives
\(z_{j+1}=1\), a contradiction.

Finally suppose that \(f_j=\nu_j\). We have
$
 \cL(\nu_j(\sigma),\varnothing)
 =-p^{-j}+u_{j+1} \,p^{-(j+1)}.
$
If this belonged to \(\cL(\sigma)\), there would be a niveau \(1\)
profile \(I\) such that
\[
 \sum_{i\in I}p^{-i}u_i=-p^{-j}+u_{j+1} \,p^{-(j+1)}.
\]
Apply the observation in the second paragraph to
\[
 y_i=\delta_I(i)u_i+\delta_{\{j\}}(i)
       -u_{j+1} \delta_{\{j+1\}}(i).
\]
Here \(y_j=1\), \(y_{j+1}\ge-(p-1)\) because $u_{j+1}\neq p$ by non-invertibility,  and \(y_i\ge0\) for
\(i\ne j+1\). The equation $y_j = pz_{j-1} - z_j$ forces
$z_{j-1} = 0$ and $z_j = -1$. The equation $y_{j+1} = -p - z_{j+1}$, together with \(y_{j+1}\ge-(p-1)\), 
forces \(z_{j+1}=-1\). But now
$ y_{j+2}=-p-z_{j+2}<0$, 
contrary to \(y_{j+2}\ge0\). This completes the proof.
\end{proof}

\section{The main theorems}

We are now ready to prove Theorem~\ref{thm:Zss}, thus also establishing Theorems~\ref{thm:equivalence-Z},~\ref{thm:simple-inclusions}, and~\ref{thm:non-simple-inclusions} as a consequence of Proposition~\ref{prop:reduction}. For convenience we restate the theorem here, in the language of phantom Serre weights.

\begin{thm}\label{thm:Zss-in-phantom}
   We have the following.
 \begin{enumerate}
     \item   We have $\Zss(\sigma) = \Zss(\sigma')$ if and only if  either $\sigma \sim \sigma'$ or else there exists $t$ such that either $\sigma = (\BT;t)$ and $\sigma' = (\St;t)$ or vice-versa.

\item Suppose $\Zss \subsetneq \Zss'$ is a simple inclusion in $\rZss$. Then either:
\begin{itemize}
    \item there exists $\sigma$ and a valid operation $f_j \in \{\mu_j,\theta_j,\nu_j\}$ such that $\Zss = \Zss(\sigma)$ and $\Zss' = \Zss(f_j(\sigma))$, or

    \item we have $\Zss = \Zss((\ur;t))$ and $\Zss' = \Zss((\crit;t))$ for some $t$.
\end{itemize}

\item Each of the inclusions listed in $(ii)$ is simple except for the ones of the form 
$\Zss(\sigma) \subsetneq \Zss(f_j(\sigma))$ with $f_j \in \{\theta_j,\nu_j\}$, $(u_{j},u_{j+1}) = (0,0)$, and either $u_{j-1} = 1$ or $u_{j+2} = p$.
\end{enumerate}
\end{thm}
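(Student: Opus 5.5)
The plan is to derive all three parts from Theorem~\ref{thm:key}, Proposition~\ref{prop:strict-operations}, Proposition~\ref{prop:critical-target}, Proposition~\ref{prop:operation-inclusion} (in its semisimple shadow) and Theorem~\ref{thm:preliminaries}, organized around one key claim. The key claim is the following: if $\Zss(\sigma)\subseteq\Zss(\sigma')$ and $\sigma\not\sim\sigma'$, with $\{\sigma,\sigma'\}\neq\{(\BT;t),(\St;t)\}$ and $(\sigma,\sigma')\neq((\ur;t),(\crit;t))$, then there exist $\widetilde\sigma\sim\sigma$ and a valid non-invertible $f_j$ such that $\Zss(f_j(\widetilde\sigma))\subseteq\Zss(\sigma')$. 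To prove it, I would first replace $\sigma$ by a $\widetilde\sigma\sim\sigma$ carrying a minimal pinning; this is allowed because $\Zss$ is constant on $\sim$-classes, since invertible $\mu_j,\nu_j$ give equalities. Then I apply Theorem~\ref{thm:key}, whose middle four cases give the operation, and Remark~\ref{rem:crucial} gives validity and non-invertibility. The excluded pairs are exactly the outputs of cases $(+)$ and $(-)$, together with the degenerate case where $\widetilde\sigma=\sigma'$, which would force $\sigma\sim\sigma'$.

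\textbf{Part $(i)$.} The ``if'' direction follows from invariance of $\Zss$ under $\sim$ and the equality $\Zss(\BT+\lambda)=\Zss(\St+\lambda)$. For ``only if'', suppose $\Zss(\sigma)=\Zss(\sigma')$ with $\sigma\not\sim\sigma'$ and the pair not BT/St. The pair $((\ur;t),(\crit;t))$ is excluded because Proposition~\ref{prop:strict-operations}-type reasoning (e.g.\ dimension, or an explicit irreducible point) shows $\Zss((\ur;t))\ne\Zss((\crit;t))$. So the key claim applies and gives $\Zss(\sigma)\subsetneq\Zss(f_j(\widetilde\sigma))\subseteq\Zss(\sigma')=\Zss(\sigma)$, where the first inclusion is strict by Proposition~\ref{prop:strict-operations}. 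This is a contradiction.

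\textbf{Part $(ii)$.} Given a simple inclusion $\Zss(\sigma)\subsetneq\Zss(\sigma')$, $\sigma\not\sim\sigma'$ by $(i)$. If the pair is BT/St, the sets are equal, which is impossible. If it is $(\ur;t),(\crit;t)$, we are in the second bullet. Otherwise the key claim gives $\Zss(\sigma)\subsetneq\Zss(f_j(\widetilde\sigma))\subseteq\Zss(\sigma')$, and simplicity forces equality on the right. This gives the first bullet with $\widetilde\sigma$ in place of $\sigma$, which is allowed since $\Zss(\widetilde\sigma)=\Zss(\sigma)$.

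\textbf{Part $(iii)$.} The inclusion $(\ur;t)\subsetneq(\crit;t)$ is simple by Proposition~\ref{prop:critical-target}. For a valid non-invertible $f_j$, suppose $\Zss(\sigma)\subsetneq\Zss(\tau)\subsetneq\Zss(f_j(\sigma))$. Applying the key claim to $(\sigma,\tau)$, we may take $\tau=g_k(\widetilde\sigma)$ up to passing down a chain. Comparing $\Zss(g_k(\widetilde\sigma))\subseteq\Zss(f_j(\sigma))$ would then use the minimal-pinning analysis again, applied to the pair $(g_k(\widetilde\sigma),f_j(\sigma))$.

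The main obstacle is the converse in $(iii)$: showing that nothing lies strictly between $\Zss(\sigma)$ and $\Zss(f_j(\sigma))$ outside the listed $(0,0)$ cases, and exhibiting intermediates in those cases. The intermediates are easy: for example $\theta_j(\sigma)\sim\nu_{j-1}\mu_j\mu_{j-1}(\sigma)$ when $u_{j-1}=1$, as noted after Theorem~\ref{thm:non-simple-inclusions}, and symmetrically when $u_{j+2}=p$. For simplicity I would use a counting invariant: the number of irregular indices $\#\{i:u_i=0\}$, normalized over a $\sim$-class. Each non-invertible valid operation decreases this number by exactly one, except in the $(u_j,u_{j+1})=(0,0)$ cases for $\theta_j,\nu_j$, where it drops by two. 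The step I would try first is to prove that $\Zss(\sigma)\subsetneq\Zss(\sigma')$ forces this invariant to decrease strictly, by reading it off $\Zss$: it would be the number of free parameters, or the codimension of the locus, mirroring the dimension count on stacks. Chaining two operations then shows that single-drop operations are simple. For double-drop operations without $u_{j-1}=1$ or $u_{j+2}=p$, I would need to rule out an intermediate $\tau$ with exactly one drop. To do this I would enumerate the possible first steps $g_k$ from $\sigma$ via Theorem~\ref{thm:key}, and check, using the explicit witnesses of Proposition~\ref{prop:strict-operations}, that none satisfies $\Zss(g_k(\sigma))\subseteq\Zss(\theta_j(\sigma))$ unless $u_{j-1}=1$ or $u_{j+2}=p$.
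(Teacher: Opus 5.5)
Your arguments for $(i)$, $(ii)$, and for the non-simplicity of the listed exceptions in $(iii)$ are essentially the paper's. The gap is the other half of $(iii)$: that all remaining inclusions are simple. You leave this as a plan, and its first step has no valid justification. A finite set $\Zss(\sigma)$ has no ``number of free parameters'' or ``codimension'' from which $\#\{i:u_i=0\}$ can be read off. So the claim that a strict inclusion in $\rZss$ forces this count to drop needs an actual mechanism.

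The paper supplies it via Proposition~\ref{prop:reduction}$(i)$, which is available once $(i)$ and $(ii)$ are proved. This lifts an intermediate chain to irreducible stacks $\cZ(\sigma)\subsetneq\cZ(\sigma')\subsetneq\cZ(\sigma'')$, with $\sigma''=(\BT;t)$ if $f_j(\sigma)=(\St;t)$, and one then compares codimensions. You could instead stay combinatorial:
\begin{itemize}
\item by $(i)$ the count is constant on each element of $\rZss$, since $\sim$ preserves it and $(\BT;t)$, $(\St;t)$ both have count $0$;
\item by $(ii)$ every simple inclusion is either $\Zss(\rho)\subsetneq\Zss(g(\rho))$ with $g$ valid and necessarily non-invertible, or $\Zss((\ur;t))\subsetneq\Zss((\crit;t))$, and each of these strictly lowers the count.
\end{itemize}
Hence an intermediate forces a drop of at least $2$. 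Either way you are reduced to $\theta_j$ with $u_j=u_{j+1}=0$, $u_{j-1}\ne1$, $u_{j+2}\ne p$.

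In that case your ``enumerate and check'' omits the two points that make it work.

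First, $(ii)$ only produces the intermediate as $\Zss(g_k(\widetilde\sigma))$ with $\widetilde\sigma\sim\sigma$, not as $\Zss(g_k(\sigma))$. You need that every invertible $\mu_i$ or $\nu_i$ applicable here changes, among $u_{j-1},\dots,u_{j+2}$, only $u_{j-1}$ ($0\leftrightarrow p$) or $u_{j+2}$ ($1\leftrightarrow 0$). It therefore preserves the four conditions and commutes with $\theta_j$. This gives $\theta_j(\widetilde\sigma)\sim\theta_j(\sigma)$, so you may take $\widetilde\sigma=\sigma$. This is precisely where $u_{j-1}\ne1$ and $u_{j+2}\ne p$ enter.

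Second, the check itself must actually be carried out. Because $u_j=u_{j+1}=0$, the niveau-$1$ values of $\cL(\theta_j(\sigma),\cdot)$ are exactly $\cL(\sigma,J)+\gamma p^{-j}$ with $\gamma\in\{-1,0,1\}$. The witness $\cL(g_k(\sigma),J')$ from Proposition~\ref{prop:strict-operations} must be of this form. Write the resulting relation as $y_i=pz_{i-1}-z_i$ with $|z_i|\le1$. For $g_k\notin\{\theta_j,\nu_j\}$ (note that $\mu_j$ and $\mu_{j+1}$ are invalid here) one has $y_{j+1}\in\{0,1\}$ and $y_j=\gamma$. This forces $z_j=0$ and then $\gamma=pz_{j-1}=0$. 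So the witness lies in $\cL(\sigma)$, contradicting Proposition~\ref{prop:strict-operations}.

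Without these steps your proposal does not establish simplicity.
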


\begin{proof}
$(i)$ The `if' direction is clear. To check the `only-if' direction, suppose that $\Zss(\sigma) = \Zss(\sigma')$. In particular $\Zss(\sigma) \subseteq \Zss(\sigma')$. Choose $\widetilde{\sigma} \sim \sigma$ such that $\Zss(\widetilde\sigma) \subseteq \Zss(\sigma')$ can be equipped with a minimal pinning. Consider what  Theorem~\ref{thm:key} says about this inclusion. All of the six bullet points listed in the theorem are proper inclusions, thanks to Remark~\ref{rem:crucial} and Proposition~\ref{prop:strict-operations}. Since the inclusion $\Zss(\widetilde\sigma) \subseteq \Zss(\sigma')$ is not proper, the conclusion of the theorem cannot hold, and so either $\widetilde\sigma = \sigma'$ or $\{\widetilde{\sigma},\sigma'\} = \{(\BT;t),(\St;t)\}$. In either case we are done. In the first case we have $\sigma \sim \widetilde\sigma = \sigma'$;\ in the second case, since neither $(\BT;t)$ nor $(\St;t)$ is the source or target of any of the invertible operations, we have $\sigma = \widetilde\sigma$, and $\{\sigma,\sigma'\} = \{(\BT;t),(\St;t)\}$.

$(ii)$ This part is essentially immediate from Theorem~\ref{thm:key} and Proposition~\ref{prop:strict-operations}. Write $\Zss \subsetneq \Zss'$ as $\Zss(\sigma) \subsetneq \Zss(\sigma')$ equipped with a minimal pinning. Since the inclusion is strict we have $\sigma \neq \sigma'$ and $\{\sigma,\sigma'\} \neq \{(\BT;t),(\St;t)\}$. Theorem~\ref{thm:key} tells us that either $\Zss = \Zss((\ur;t))$ and $\Zss' = \Zss((\crit;t))$ for some $t$, or else one of the four middle bullet points holds and we have
\[ \Zss(\sigma) \subseteq \Zss(f_j(\sigma)) \subseteq \Zss(\sigma') \]
for some valid and non-invertible $f_j$. Proposition~\ref{prop:strict-operations} tells us that the left-hand inclusion is strict, and the hypothesis that $\Zss \subsetneq \Zss'$ is simple implies $\Zss(f_j(\sigma)) = \Zss(\sigma')$.

$(iii)$ We already saw in the introduction, in the discussion following the statement of Theorem~\ref{thm:non-simple-inclusions}, that each of the listed exceptions is non-simple on the level of the stacks $\cZ$. For each listed exception, that discussion exhibits $f_j(\sigma) = f(g(h(\sigma)))$ where $f,g,h$ are valid operations, two of which are non-invertible and one of which is invertible. It follows from Proposition~\ref{prop:strict-operations} that two of the inclusions in 
\[ \Zss(\sigma) \subseteq \Zss(h(\sigma)) \subseteq \Zss(g(h(\sigma))) \subseteq \Zss(f(g(h(\sigma)))) = \Zss(f_j(\sigma)) \]
are strict, and so $\Zss(\sigma) \subseteq \Zss(f_j(\sigma))$ cannot be simple.

It remains to show that each of the inclusions that we claim to be simple is actually simple. The simplicity of $\Zss((\ur;t)) \subseteq \Zss((\crit;t))$ was already established by Proposition~\ref{prop:critical-target}. It remains to consider inclusions $\Zss(\sigma) \subsetneq \Zss(f_j(\sigma))$ for valid~$f_j$ not of the exceptional type.

Since parts $(i)$ and $(ii)$ have  been proved, observe that by Proposition~\ref{prop:reduction}, we already have Proposition~\ref{prop:reduction}$(i)$, Theorem~\ref{thm:equivalence-Z}, and Theorem~\ref{thm:simple-inclusions} at our disposal. If there is an intermediate inclusion $\Zss(\sigma) \subsetneq \Zss(\sigma') \subsetneq \Zss(\sigma'') = \Zss(f_j(\sigma))$, where we take $\sigma'' = f_j(\sigma)$ unless $f_j(\sigma) = (\St;t)$, in which case we take $\sigma'' = (\BT;t)$, then there is also a chain
\[ \cZ(\sigma) \subsetneq \cZ(\sigma') \subsetneq \cZ(\sigma'') . \]
Since each stack in this chain is irreducible, \[ \dim \cZ(f_j(\sigma)) = \dim \cZ(\sigma'') \ge \dim \cZ(\sigma) + 2.\]  
Write $\sigma = (u_i;t)$. Since the codimension of  $\cZ(\sigma)$  is given by $\#\{i : u_i = 0\}$, the operation must reduce the cardinality of this set by $2$, i.e., we are in the case where the operation is either $\theta_j$ or $\nu_{j}$ with $(u_{j-1},\ldots,u_{j+2}) = (u_{j-1},0,0,u_{j+2})$ and $u_{j-1} \neq 1$, $u_{j+2} \neq p$. Since in this situation $\theta_j = \nu_{j}$, it suffices to consider~$\theta_j$.\footnote{In the Lean formalization available at \url{https://github.com/davidsavitt/KLSposet}, this argument that appeals to the Emerton--Gee stacks is replaced by a combinatorial argument.}

Suppose, then, that we have a chain
\[ \Zss(\sigma) \subsetneq \Zss(\sigma') \subsetneq \Zss(\theta_j(\sigma)). \]
We can and do assume that $\Zss(\sigma) \subsetneq \Zss(\sigma')$ is simple. Since $\sigma'$ cannot be regular, because otherwise $\Zss(\sigma')$ would be maximal in $\rZss$, we have $\sigma' \sim f_{k}(\widetilde{\sigma})$ for some $\widetilde\sigma \sim \sigma$, some index $k$,  and a valid non-invertible operation $f_{k}$. We want to derive a contradiction.

Writing
$\widetilde{\sigma}=(\widetilde u_r;\widetilde t)$, we claim that we still have
\[
 \widetilde u_{j}=\widetilde u_{j+1}=0,\qquad
 \widetilde u_{j-1}\neq1,\qquad
 \widetilde u_{j+2}\neq p,
\]
and in addition we have
\begin{equation}\label{eq:equiv-local-nu}
 \theta_{j}(\widetilde{\sigma})\sim\theta_{j}(\sigma).
\end{equation}
Indeed, any invertible operation that alters one of $u_{j-1},\ldots,u_{j+2}$ can only be $\mu_{j-2}/\nu_{j-2}$, changing $u_{j-1}$ from $0$ to $p$ or vice-versa, or $\mu_{j+2}/\nu_{j+2}$,  changing $u_{j+2}$ from $1$ to $0$ or vice-versa. Thus any invertible operation on $\sigma$ preserves the given conditions on $(u_{j-1},\ldots,u_{j+2})$ and has support disjoint from that of $\theta_{j}$, hence commutes with $\theta_j$. The claim follows. 
Consequently, we may replace $\sigma$ with $\widetilde{\sigma}$, and $\sigma'$ with $f_{k}(\widetilde\sigma)$, so that our chain becomes
\[ \Zss(\sigma) \subsetneq \Zss(f_{k}(\sigma)) \subsetneq \Zss(\theta_j(\sigma)). \]
We will show in this situation that $f_{k} = \theta_j$ or $\nu_{j}$, which provides the necessary contradiction.

A direct calculation gives
\begin{equation}\label{eq:local-nu-coordinates}
 \{\cL(\theta_{j}(\sigma),J):J\subseteq\Z/f\Z\}
 =  \{\cL(\sigma,J) + \gamma \cdot p^{-j}:J\subseteq\Z/f\Z,\ \gamma \in \{-1,0,1\}\}.
\end{equation}
Take $J' = \{k+1\}$ if $f_{k} = \mu_{k}$ and $J' = \varnothing$ if $f_{k} = \theta_{k}$ or $\nu_{k}$.  Then
\[
\cL(f_{k}(\sigma),J') = t + \eta \quad \text{with} \quad
\eta := \begin{cases}
  p^{-k}&f_{k}=\mu_{k}\\
  -p^{-k} &f_{k}=\theta_{k}\\
  -p^{-k} +u_{k+1}p^{-(k+1)} &f_{k}=\nu_{k}
 \end{cases}
\]
and since $J'$ has niveau $1$, this value must be in the set in
\eqref{eq:local-nu-coordinates}. We can then write
\[
 \eta=\sum_{i\in J} u_i p^{-i} + \gamma \cdot p^{-j}
 \]
for some $J \subseteq\Z/f\Z$. Define $y_i$ by the formulas 
\[
\begin{split}
 y_i&=\delta_J(i)u_i+\gamma \cdot \delta_{\{j\}}(i)
             -\delta_{\{k\}}(i),\\
 y_i&=\delta_J(i)u_i+\gamma \cdot \delta_{\{j\}}(i)
             +\delta_{\{k\}}(i),\\
 y_i&=\delta_J(i)u_i+\gamma \cdot \delta_{\{j\}}(i)
             +\delta_{\{k\}}(i)
             -u_{k+1}\delta_{\{k+1\}}(i)
\end{split}
\]
in the three cases $f_{k} = \mu_{k}$, $\theta_{k}$, $\nu_{k}$ respectively. As usual we find $y_i = p z_{i-1} - z_i$ with $|z_i| \le 1$ for all $i$.

Suppose $k \neq j$ in the $\theta_{k}$ and $\nu_{k}$ cases. Note also that $k \neq j, j+1$ in the $\mu_{k}$ case, because $\mu_{j}$ and $\mu_{j+1}$ are not valid.

If $f_{k} = \mu_{k}$ then we have $y_{j+1}=0$, while in the other two cases
$y_{j+1} \in\{0,1\}$. Either way we have
$z_{j}=0$. Moreover, in all three cases $y_{j}=\gamma$, so the
equation $y_{j} = p z_{j-1} - z_{j}$ gives $\gamma = p z_{j-1}$. It follows that $\gamma = 0$. Thus in fact $\cL(f_{k}(\sigma),J') = \cL(\sigma,J) \in \cL(\sigma)$. 
Looking at the choice of $J'$ and comparing with the list in Proposition~\ref{prop:strict-operations}, we reach a contradiction.
\end{proof}

\bibliographystyle{math}
\bibliography{reverse}
\end{document}